\newcommand{\F}[0]{\mathbb{Q}_q}
\renewcommand{\O}[0]{\mathbb{Z}_q}
\newcommand{\Z}[0]{\mathbb{Z}}
\newcommand{\R}[0]{\mathbb{R}}
\newcommand{\Q}[0]{\mathbb{Q}}
\newcommand{\C}[0]{\mathbb{C}}
\newcommand{\N}[0]{\mathbb{N}}
\newcommand{\T}[0]{\mathbb{T}}
\newcommand{\Om}[0]{\Omega}
\newcommand{\ta}[0]{\theta}
\newcommand{\bs}[0]{\setminus}
\newcommand{\ld}[0]{\lambda}
\newcommand{\vep}[0]{\varepsilon}
\newcommand{\lsm}[0]{\lesssim}
\newcommand{\wh}[1]{\widehat{#1}}
\newcommand{\mc}[1]{\mathcal{#1}}
\newcommand{\ov}[1]{\overline{#1}}
\newcommand{\wt}[1]{\widetilde{#1}}
\newcommand{\st}[1]{\substack{#1}}
\newcommand{\mf}[1]{\mathfrak{#1}}
\newcommand{\nms}[1]{\| #1 \|}
\newcommand{\Mod}[1]{\ (\mathrm{mod}\ #1)}
\def\Xint#1{\mathchoice
{\XXint\displaystyle\textstyle{#1}}%
{\XXint\textstyle\scriptstyle{#1}}%
{\XXint\scriptstyle\scriptscriptstyle{#1}}%
{\XXint\scriptscriptstyle\scriptscriptstyle{#1}}%
\!\int}
\def\XXint#1#2#3{{\setbox0=\hbox{$#1{#2#3}{\int}$ }
\vcenter{\hbox{$#2#3$ }}\kern-.6\wd0}}
\def\dashint{\Xint-}
\tikzset {
  highlight/.style = {gray, opacity=0.75},
  digit/.style = { minimum height=5mm, minimum width=5mm, anchor=center },
}
\newcounter{row}
\newcounter{col}
\newcommand\setrow[9]{
    \setcounter{col}{1}
    \foreach \n in {#1, #2, #3, #4, #5, #6, #7, #8, #9} {
        \edef\x{\value{col} - 0.5}
        \edef\y{9.5 - \value{row}}
        \node[digit,name={\arabic{row}-\arabic{col}}] at (\x, \y) {\n};
        \stepcounter{col}
    }
    \stepcounter{row}
}
\def\highlightrectangle#1#2#3#4{
\begin{pgfonlayer}{background}
\fill[highlight] (#1-#2.north west) rectangle (#3-#4.south east);
\end{pgfonlayer}
}
\newtheorem{thm}{Theorem}[section]
\newtheorem{lemma}[thm]{Lemma}
\newtheorem{prop}[thm]{Proposition}
\newtheorem{cor}[thm]{Corollary}
\theoremstyle{remark}
\newtheorem{rem}{Remark}
\author{Zane Kun Li}
\address{Department of Mathematics, University of Wisconsin-Madison, Madison, WI 53706, USA}
\email{zkli@wisc.edu}
\title{An introduction to decoupling and harmonic analysis over $\Q_p$}
\begin{document}
\begin{abstract}
The goal of this expository paper is to provide an introduction to decoupling by working
in the simpler setting of decoupling for the parabola over $\Q_p$.
Over $\Q_p$, commonly used heuristics in decoupling
are significantly easier to make rigorous over $\Q_p$ than over $\R$ and such decoupling theorems over $\Q_p$ 
are still strong enough to derive interesting number theoretic conclusions.
\end{abstract}

\maketitle

\section{Introduction}
\subsection{Motivation}
For an interval $J \subset [0, 1]$, let
\begin{align}\label{tajdef}
\ta_{J} := \{(\xi, \eta) \in \R^2: \xi \in J, |\eta - \xi^2| \leq |J|^2\}
\end{align}
and let $f_{J}$ be defined such that $\wh{f_J} := \wh{f}1_{J \times \R}$.
For $\delta \in \N^{-1}$, let $P_{\delta}$ be a partition of $[0, 1]$ into
intervals of length $\delta$.
For $2 \leq p \leq \infty$, let $D_{p, \R}(\delta)$ be the best constant such that
\begin{align}\label{decdef}
\nms{f}_{L^{p}(\R^2)} \leq D_{p, \R}(\delta)(\sum_{K \in P_{\delta}}\nms{f_K}_{L^{p}(\R^2)}^{2})^{1/2}
\end{align}
for all $f$ with Fourier transform supported in $\bigcup_{K \in P_{\delta}}\ta_K$.
A special case of Bourgain and Demeter's paraboloid decoupling theorem \cite{BD15, BD17}
gives that 
$$D_{p, \R}(\delta) \lsm_{p, \vep} \delta^{-\vep}(1 + \delta^{-(\frac{1}{2} - \frac{3}{p})}).$$
By interpolation, it suffices to prove the critical estimate that $D_{6, \R}(\delta) \lsm_{\vep} \delta^{-\vep}$.
In 2020, Guth, Maldague, and Wang in \cite{GMW} 
improved this estimate to $\lsm (\log \delta^{-1})^{C}$
for an unspecified large constant $C$.
One application of their estimate is the following:
Let $K(N)$ be the best constant such that 
\begin{align}\label{kndef}
\nms{\sum_{n = 1}^{N}a_n e(nx + n^2 t)}_{L^{6}_{x, t}(\T^2)} \leq K(N)(\sum_{n = 1}^{N}|a_n|^{2})^{1/2}
\end{align}
for all $\{a_n\}_{n = 1}^{N} \subset \C$ and where here $e(z) := e^{2\pi i z}$.
They were able to show that $K(N) \lsm (\log N)^{C}$ for the same unspecified large constant $C$.
Before this, the best known upper bound was due to Bourgain in \cite{BourgainGAFA93}
who showed an upper bound of $\exp(O(\frac{\log N}{\log\log N}))$ using the divisor bound and a lower bound of $(\log N)^{1/6}$ by analyzing major arcs.

Recently, instead of working over $\R$, decoupling has been studied over $\F$ (we use $\F$ instead of $\Q_p$ as we 
reserve $p$ for the $L^p$ norm), see for example \cite{CHLMRY} and \cite{GLY21}. The prime $q$ throughout will be an odd prime.
In \cite{GLY21}, with Shaoming Guo and Po-Lam Yung, we optimized Guth, Maldague, and Wang's argument and worked over $\F$ instead of $\R$
to prove a parabola decoupling theorem over $\F$ which then was able to reduce $C$ down to $2+$, that is, $K(N) \lsm_{\vep} (\log N)^{2 + \vep}$.

Similar to the advantages of working with the Walsh model in time frequency analysis (see for example \cite{thiele}), there are some clear advantages in working over $\F$ instead of $\R$. The following are some heuristics
that are used when thinking about decoupling over $\R$ but need to be made rigorous when writing out the proofs. 
This often involves working with smooth cutoffs or convolutions to make all the ``essentially" statements rigorously true and can be somewhat
technical.
Over $\F$, however, the statements below are rigorously true with little modification.

\begin{enumerate}[$(i)$]
\item The neighborhood of the parabola $\ta_{J}$ is essentially a parallelogram.

\item Balls and squares are essentially the same and so the square $[0, R]^2$ is essentially the same as the square $[-R, R]^2$ which in turn
is essentially the same as the ball $B(0, R)$. Furthermore, $O(1)$ dilates of geometric objects such as a balls and rectangles are also essentially the same,
and so we don't distinguish between $B(0, R)$ and $B(0, 2R)$.

\item Consider an $R \times R^2$ rectangle centered at the origin pointing in the direction of (say) $(-1, 1)$.
The square $[0, R^2]^2$ can essentially be partitioned by $R$ many translates of these rectangles.

\item The Fourier transform of $1_{B(0, 1)}$ is essentially $1_{B(0, 1)}$.

\item (Small angle approximation) If $|\xi| \ll 1$, then $e(\xi) \approx 1$.

\item (Local constancy property) If $f$ is Fourier supported in $\ta$, then $|f|$ is essentially constant on translates of the polar (or dual) set $\ta^{\ast} := \{x \in \R^2: |x \cdot \xi| \leq 1 \text{ for all } \xi \in \ta\}$. Thus if $J = [a, a + \delta]$ and $\ta = \ta_{J}$, then $|f|$ is essentially constant on translates of the ``dual tube" $\{(x, y) \in \R^2: |x + 2ay| \leq \delta^{-1}, |y| \leq \delta^{-2}\}$. The $a$ in $x + 2ay$ can be replaced with any point in $J$.
\end{enumerate}
Additionally, decoupling theorems over $\F$ are still strong enough to imply
the applications of decoupling to exponential sums and solution counting such as in Vinogradov's Mean Value Theorem
and \eqref{kndef} above as often only even $L^p$ norms are considered. Thus the author believes decoupling over $\F$ provides an ideal setting for someone just starting out in the area.

\subsection{Decoupling for the parabola over $\F$}
We now redefine our definitions in \eqref{tajdef} and \eqref{decdef} to work over $\F$.
For $J \subset \O$, let
\begin{align}\label{tajqp} 
\ta_{J} := \{(\xi, \eta) \in \F^2: \xi \in J, |\eta - \xi^2| \leq |J|^2\}
\end{align}
and let $f_J$ be defined such that $\wh{f_J} := \wh{f}1_{J \times \F}$.  
For $\delta \in q^{-\N}$, let $P_{\delta}$ be a partition of $\O$ into intervals 
of length $\delta$.
Finally, we let $D_{p}(\delta) := D_{p, \F}(\delta)$ be defined such that
\begin{align*}
\nms{f}_{L^{p}(\F^2)} \leq D_{p, \F}(\delta)(\sum_{K \in P_{\delta}}\nms{f_K}_{L^{p}(\F^2)}^{2})^{1/2}
\end{align*}
for all $f$ with Fourier transform supported in $\bigcup_{K \in P_{\delta}}\ta_K$.
We will prove the following $\F$ analogue of Bourgain and Demeter's decoupling theorem
applied to the parabola.
\begin{thm}\label{main}
For $\delta \in q^{-\N}$, we have
$$D_{p, \F}(\delta) \lsm_{p, q, \vep} \delta^{-\vep}(1 + \delta^{-(\frac{1}{2} - \frac{3}{p})}).$$
\end{thm}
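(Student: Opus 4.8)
The plan is to run Bourgain and Demeter's induction-on-scales argument, exploiting that over $\F$ the heuristics $(i)$--$(vi)$ listed above are exact identities rather than ``essentially'' statements.

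First I would reduce to the critical exponent $p = 6$. Since the pieces $\wh{f_{K}} = \wh{f}\,1_{K \times \F}$ have pairwise disjoint supports, Plancherel gives $D_{2}(\delta) = 1$ on the nose, while the triangle inequality followed by Cauchy--Schwarz over the $\delta^{-1}$ caps of $P_{\delta}$ gives $D_{\infty}(\delta) \le \delta^{-1/2}$. Interpolating each of these with the claim $D_{6}(\delta) \lsm_{q, \vep} \delta^{-\vep}$ handles the two ranges $2 \le p \le 6$ and $6 \le p \le \infty$, and these produce exactly the two terms $1$ and $\delta^{-(\frac{1}{2} - \frac{3}{p})}$ appearing in the theorem. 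So everything comes down to the critical estimate $D_{6}(\delta) \lsm_{q, \vep} \delta^{-\vep}$.

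The critical estimate rests on four ingredients, each clean over $\F$. \emph{Parabolic rescaling}: for $J \subset \O$ of length $\sigma$, the affine map normalizing $\ta_{J}$ to $\ta_{\O}$ is a genuine bijection preserving $|\cdot|$ up to a unit, so decoupling $\ta_{J}$ into its $\delta$-subcaps costs exactly $D_{6}(\delta/\sigma)$. \emph{Flat decoupling}: any family of $N$ caps decouples in $L^{p}$ with constant $\le N^{1/2 - 1/p}$, by Cauchy--Schwarz and $\ell^{p} \subset \ell^{2}$. \emph{Bilinear $L^{4}$ biorthogonality}: for distinct caps $I_{1}, I_{2} \subset \O$ of length $q^{-1}$, the map $(\xi_{1}, \xi_{2}) \mapsto (\xi_{1} + \xi_{2}, \xi_{1}^{2} + \xi_{2}^{2})$ has Jacobian $2(\xi_{1} - \xi_{2})$, which (as $q$ is odd) is a unit on $I_{1} \times I_{2}$, and the map is injective there; hence the sumsets $\ta_{K_{1}} + \ta_{K_{2}}$ over $\delta$-caps $K_{1} \subset I_{1}$, $K_{2} \subset I_{2}$ are pairwise disjoint, and Plancherel plus Cauchy--Schwarz give $\nms{(f_{I_{1}} f_{I_{2}})^{1/2}}_{L^{4}(\F^{2})} \lsm_{q} \big(\sum_{K_{1}}\nms{f_{K_{1}}}_{4}^{2}\big)^{1/4}\big(\sum_{K_{2}}\nms{f_{K_{2}}}_{4}^{2}\big)^{1/4}$. \emph{Local constancy}: if $g$ is Fourier supported in a cap $\ta$, then $g$ is literally constant on cosets of the dual lattice $\ta^{\ast}$ (heuristic $(vi)$ with $\approx$ replaced by $=$), which turns the multiscale ``ball inflation'' step below into an exact computation.

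With these in hand I would run the Bourgain--Demeter loop. A Bourgain--Guth broad/narrow decomposition at a fixed scale comparable to $q^{-1}$ writes $\F^{2}$ as the union of the set where one cap dominates $f$ --- treated by applying parabolic rescaling cap-by-cap and summing in $\ell^{6}$ --- and the set where two distinct caps together dominate $|f|$, on which $|f|$ is pointwise $\lsm (|f_{I_{1}}||f_{I_{2}}|)^{1/2}$. For the broad part one combines the bilinear $L^{4}$ bound with the trivial bilinear $L^{\infty}$ bound, uses local constancy to pass between the ambient scale and intermediate scales (ball inflation), and feeds in parabolic rescaling and flat decoupling, the net effect being to bound the broad contribution at scale $\delta$ in terms of linear decoupling at a substantially coarser scale (morally $\delta^{1/2}$). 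Assembling the narrow and broad estimates yields a self-improving inequality, roughly of the form $D_{6}(\delta) \lsm_{q, \vep} \delta^{-\vep}\, D_{6}(\delta^{1/2})^{1/2}$; iterating it $O(\log\log\delta^{-1})$ times, with the a priori bound $D_{6}(\rho) \le \rho^{-1/2}$ applied at the final coarse scale, collapses everything to $D_{6}(\delta) \lsm_{q, \vep} \delta^{-\vep}$ (the $q$-dependence enters through the $O_{q}(1)$ losses in the Bourgain--Guth step). I expect the main obstacle to be precisely this last bootstrap: setting up the broad/narrow dichotomy with the correct powers of the cap count, and then tracking the $\vep$-losses, the polynomial-in-$\sigma$ constants, and the flat-decoupling losses carefully enough that the iterated inequality genuinely telescopes. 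Everything geometric underneath --- rescaling, transversality, local constancy --- holds on the nose over $\F$, which is the whole point of working there.
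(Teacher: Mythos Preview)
Your reduction to $p=6$ via interpolation matches the paper exactly, but from there the routes diverge. You outline the original Bourgain--Demeter scheme (bilinear $L^4$ biorthogonality \`a la C\'ordoba--Fefferman, ball inflation via local constancy, and a multiscale bootstrap of the rough shape $D_6(\delta)\lsm \delta^{-\vep}D_6(\delta^{1/2})^{1/2}$). The paper instead runs the efficient-congruencing-inspired argument from \cite{Li18}: after the same broad/narrow reduction it introduces \emph{asymmetric} bilinear constants $M_{a,b}(\delta,\nu)$ governing $\int |f_I|^2|f_{I'}|^4$ with $|I|=\nu^a$, $|I'|=\nu^b$, proves the single ``doubling'' step $M_{a,b}\lsm \nu^{-O(1)}M_{2b,b}$ (this is the heart of the paper, with five separate proofs), and combines it with the H\"older switch $M_{a,b}\le M_{b,a}^{1/2}D(\delta/\nu^b)^{1/2}$ to drive the indices from $(1,1)$ to $(2^{N-1},2^N)$ in $N$ steps. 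The two approaches share the same underlying geometry --- indeed the paper's fifth proof of its key lemma is precisely the wavepacket/bilinear-Kakeya computation you sketch --- but the paper's iteration is organized around the $2/4$ exponent split and index-doubling rather than a tower of intermediate scales, which makes the bootstrap bookkeeping cleaner and exposes the parallel with Heath-Brown/Wooley. Your route is valid, but be warned that the honest Bourgain--Demeter loop requires more than the bare recursion you wrote: one must track $\ell^2 L^p$ averages at nested scales and interpolate carefully, so the ``main obstacle'' you flag is real and nontrivial even over $\F$.
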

Once again by interpolation, it suffices to prove that $D_{6, \F}(\delta) \lsm_{q, \vep} \delta^{-\vep}$.
The proof we follow is the efficient congruencing inspired proof of parabola
decoupling over $\R$ due to myself in \cite{Li18}.
Following the proofs of the Main Conjecture in Vinogradov's Mean Value Theorem
using the harmonic analysis method of decoupling by Bourgain, Demeter, and Guth
in \cite{BDG} and the number theoretic method of efficient congruencing by Wooley
\cite{WooleyNested}, it is an interesting question to determine how these
two methods are related. This study led to new proofs of decoupling for the parabola \cite{Li18},
cubic moment curve \cite{GLY19}, and the degree $k$ moment curve \cite{GLYZK}, corresponding to
\cite[Section 4.3]{Pierce}, \cite{HB-Cubic}, and \cite{WooleyNested}, respectively.
See also \cite{CDGJLM} for proof of decoupling for fractal sets on the parabola which was
inspired from efficient congruencing
for ellipsephic sets \cite{Biggs19}. Additionally see \cite{CHLMRY} for an interpretation
into decoupling of a classical argument (see for example \cite[Chapter 5]{V} or \cite[Theorem 13]{Tao254A}) that gave partial progress towards the Main Conjecture
in Vinogradov's Mean Value Theorem.

This paper is organized as follows: In Sections \ref{qpfacts} and \ref{moregeom}, we review the needed facts for $\F$
that are relevant to our paper. In particular, in Section \ref{moregeom}, we state the version of the locally constant property/wavepacket decomposition
that we will use.
In Section \ref{basics}, we show that Theorem \ref{main} is sharp up to the $\vep$ in the exponent, prove an interpolation theorem that allows use to reduce Theorem \ref{main}
to just showing the $p = 6$ case, and show that Theorem \ref{main} is strong enough to imply an estimate for $K(N)$ in \eqref{kndef}.
In Section \ref{l2l6pf}, we prove $D_{6, \F}(\delta) \lsm_{q, \vep} \delta^{-\vep}$ save for one key step (Proposition \ref{keystep1}). Finally, in Section \ref{fiveproofs},
we give five different proofs of Proposition \ref{keystep1}.

\subsection*{Notation}
We say that
$f$ is Fourier supported in $\Om$ if its Fourier transform $\wh{f}$ is supported
in $\Om$. Given an interval $I$, we let $P_{\delta}(I)$ be the partition of $I$ into intervals of length $\delta$.
Given two positive expressions $X$ and $Y$, we write
$X \lsm Y$ if $X \leq CY$ for some absolute constant $C$. If $C$ depends on some additional parameter $A$,
then we write $X \lsm_{A} Y$. We write $X \sim Y$ if $X \lsm Y$ and $Y \lsm X$.
By writing $f(x) = O(g(x))$, we mean $|f(x)| \lsm g(x)$.

\subsection*{Acknowledgements} The author is supported
by NSF grants DMS-2037851 and DMS-2311174. This expository paper evolved from the slides of a talk the author gave on March 26, 2022 at the AMS Spring Central Sectional Meeting 
and also the notes for a series of lectures given by the author from April to May 2022 at the National Center for Theoretical
Science (NCTS) in Taipei, Taiwan. A recording of this lecture series can be viewed at \url{https://youtu.be/_ML8jAu5cqo&list=PLQZfZKhc0kiAXeVcTAnYT9_YwpQ13SMB5}.
The author would like to thank NCTS for the kind hospitality during his visit.
The author would also like to thank the anonymous referee for their comments and suggestions in improving the exposition.

\section{Some basic facts about $\F$}\label{qpfacts}
We will summarize all the relevant facts about $\F$ in this section and the next.
See the Introduction and Section 2 of \cite{CHLMRY}, Section 2 of \cite{GLY21}, Chapters 1 and 2 of \cite{Taibleson}, and
Chapter 1, Sections 1 and 4 of \cite{VVZ} for additional discussion
of analysis on $\F$. The exposition in this section and the next borrows heavily from the presentation in \cite{CHLMRY} and \cite{GLY21}.

\subsection{Definitions and basic geometric observations}
The field $\F$ is the completion of $\Q$ under the $q$-adic norm,
defined by $|0| = 0$ and $|q^{a}b/c| = q^{-a}$ if $a \in \Z$, $b, c \in \Z\bs\{0\}$
and $q$ is relatively prime to both $b$ and $c$. Then $\F$ can be identified
with the set of all formal series 
\begin{align*}
\F = \{\sum_{j = k}^{\infty}a_{j}q^{j}: k \in \Z, a_{j} \in \{0, 1, \ldots, q - 1\} \text{ for every } j \geq k\}
\end{align*}
and the $q$-adic norm on $\F$ satisfies $|\sum_{j = k}^{\infty}a_{j}q^{j}| = q^{-k}$
if $a_{k} \neq 0$.
A nice article illustrating one way to visualize $\Q_3$ can be found
in \cite{quanta}.
Strictly speaking we should be writing $|\cdot|_{q}$ instead of $|\cdot|$,
but we omit this dependence since we will only be dealing with one fixed $q$.
The $q$-adic norm on $\F$ induces a norm on $\F^2$, which we
denote also by $|\cdot|$ by abuse of notation, via $|(x, y)| := \max(|x|, |y|)$.
The key property obeyed by the $q$-adic norm is the ultrametric triangle inequality:
$|x + y| \leq \max(|x|, |y|)$ with equality if $|x| \neq |y|$. 

We adopt all our geometry terminology in analogy from the real setting.
An interval $I(a, r)$ in $\F$ is a set of the form $\{\xi \in \F: |\xi - a| \leq r\}$ where
$a \in \F$ and $r \geq 0$; $r$ will then be called the length of the interval. 
Given an interval
$I$, we will use $|I|$ to denote the length of $I$. 
A square in $\F^2$ of side length $r$ is then the product of 2 intervals
in $\F$ of length $r$. Finally, we define the ball $B(a, r) := \{\xi \in \F^2: |\xi - a| \leq r\}$ where $a \in \F^2$ and $r \geq 0$
and will say this ball has radius $r$, in analogy to the real setting.

The ring of integers
$\O$ coincides with the unit interval $I(0, 1) = \{\xi \in \F: |\xi| \leq 1\}$.
Elements in $\O$ are exactly the elements in $\F$
of the form $\sum_{j \geq 0}a_{j}q^{j}$
where $a_{j} \in \{0, 1, \ldots, q - 1\}$.
As each positive integer has a base $q$ representation, we may
embed $\N$ into $\O$. Identifying $-1$ with the formal power
series $\sum_{j \geq 0}(q - 1)q^{j}$ in $\O$ allows us to embed
$\Z$ into $\O$. Inverting formal power series and that the 
addition of two formal power series is still a formal power series
shows that $\Q$ embeds into $\F$.

Having given the definitions of $\F$ and $\O$, we now give some basic geometric observations.
Intervals of length $\delta$ partition $[0, 1]$ into $\delta^{-1}$ many intervals.
Not only is the same is true in $\O$, but also two distinct intervals are always separated by a positive distance.
\begin{lemma}\label{lengthpart}
Fix $k \in \N$. For $a \in \{0, 1, \ldots, q^{k} - 1\}$, the 
intervals $I(a, q^{-k}) = \{\xi \in \F: |\xi - a| \leq q^{-k}\}$ partition $\O$
into $q^{k}$ many disjoint intervals and each pair of intervals are separated
by a distance at least $q^{-k + 1}$.
\end{lemma}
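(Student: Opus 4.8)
The plan is to exploit the identification of $\O$ with formal power series $\sum_{j \geq 0} a_j q^j$ and to observe that the first $k$ "digits" $a_0, a_1, \ldots, a_{k-1}$ are exactly the data that determine which interval $I(a, q^{-k})$ a point lies in. First I would fix $a \in \{0, 1, \ldots, q^k - 1\}$, write its base-$q$ expansion $a = \sum_{j=0}^{k-1} a_j q^j$, and check that $I(a, q^{-k}) = \{\xi \in \O : \xi \equiv a \Mod{q^k \O}\}$, i.e.\ that $|\xi - a| \leq q^{-k}$ holds precisely when $\xi$ has the same first $k$ digits as $a$. This is immediate from the definition of the $q$-adic norm: $|\xi - a| \leq q^{-k}$ means $\xi - a \in q^k \O$, so the difference has all digits in positions $0, \ldots, k-1$ equal to zero.

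Next I would establish the partition claim. Every $\xi = \sum_{j \geq 0} b_j q^j \in \O$ has a unique truncation $a(\xi) := \sum_{j=0}^{k-1} b_j q^j \in \{0, 1, \ldots, q^k - 1\}$, and by the previous paragraph $\xi \in I(a(\xi), q^{-k})$; moreover if $\xi \in I(a, q^{-k})$ then $a$ and $\xi$ share their first $k$ digits, forcing $a = a(\xi)$. Hence the $q^k$ sets $I(a, q^{-k})$ are pairwise disjoint and cover $\O$. (Alternatively one can cite the general ultrametric fact that any two balls are either nested or disjoint, together with a counting argument using $|\O| = 1$ and $|I(a, q^{-k})| = q^{-k}$.)

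Finally, for the separation statement, take $a \neq a'$ in $\{0, \ldots, q^k - 1\}$ and points $\xi \in I(a, q^{-k})$, $\xi' \in I(a', q^{-k})$. Then by the ultrametric inequality
\begin{align*}
|\xi - \xi'| &= |(\xi - a) + (a - a') + (a' - \xi')| \\
&\leq \max(|\xi - a|,\ |a - a'|,\ |a' - \xi'|),
\end{align*}
but since $|\xi - a| \leq q^{-k}$ and $|a' - \xi'| \leq q^{-k}$ while $|a - a'| \geq q^{-(k-1)} > q^{-k}$ (because $a - a'$ is a nonzero integer not divisible by $q^k$; indeed it can be divisible by $q^j$ only for $j \leq k - 1$), the maximum is attained uniquely by the middle term, and equality in the ultrametric inequality gives $|\xi - \xi'| = |a - a'| \geq q^{-k+1}$. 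I expect the main (mild) obstacle to be pinning down the claim $|a - a'| \geq q^{-k+1}$ cleanly: one must argue that two distinct elements of $\{0, \ldots, q^k - 1\}$ cannot be congruent mod $q^k$, hence their difference has $q$-adic norm at least $q^{-(k-1)}$. Everything else is a direct unwinding of definitions.
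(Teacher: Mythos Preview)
Your proposal is correct and follows essentially the same approach as the paper: identify $I(a,q^{-k})$ with the residue class $a \Mod{q^k}$ to get the partition, and for the separation use that $|a-a'| \geq q^{-k+1}$ together with the equality case of the ultrametric inequality to conclude $|\xi-\xi'| = |a-a'|$. The paper's write-up is more compressed (it groups $(\xi_1-\xi_2)-(a-b)$ into a single term of norm $\leq q^{-k}$ rather than splitting into three), but the content is the same.
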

\begin{proof}
The $\xi \in \F$ such that $|\xi - a| \leq q^{-k}$
are exactly the $\xi \in \O$ such that $\xi \equiv a \Mod{q^k}$ when we write $\xi$ as a formal power series.
Since $a$ ranges from $0$ to $q^{k} - 1$, the $I(a, q^{-k})$
partition $\O$ into $q^{k}$ many disjoint intervals of length $q^{-k}$.

Next, suppose $|\xi_{1} - a| \leq q^{-k}$ and $|\xi_{2} - b| \leq q^{-k}$
for some $a \neq b$. Since $|a - b| \geq q^{-k + 1}$ and
$|(\xi_{1} - \xi_{2}) - (a - b)| \leq q^{-k}$, it follows
from the equality case of the ultrametric triangle inequality
that $|\xi_{1} - \xi_{2}| = |a - b| \geq q^{-k + 1}$.
\end{proof}

Next, from how $|\cdot|$ is defined on $\F^2$, squares of side length $r$ and balls of radius $r$ are the same. More precisely:
\begin{lemma}
Fix an arbitrary $(A, B) \in \F^2$. The square
$\{(x, y) \in \F^2: |x - A| \leq r, |y - B| \leq r\}$
of side length $r$ is the same as the ball
$\{(x, y) \in \F^2: |(x, y) - (A, B)| \leq r\}$ of radius $r$.
\end{lemma}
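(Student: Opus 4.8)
The plan is to prove the two sets are equal by directly unwinding the definition of the norm $|\cdot|$ on $\F^2$. Recall that for $(u, v) \in \F^2$ this norm was defined by $|(u, v)| := \max(|u|, |v|)$. So my first step is to apply this with $(u, v) = (x - A, y - B)$: a point $(x, y)$ lies in the ball $B((A, B), r)$ if and only if $\max(|x - A|, |y - B|) \leq r$.

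The second and final step is the elementary observation that, for nonnegative reals, $\max(|x - A|, |y - B|) \leq r$ holds exactly when both $|x - A| \leq r$ and $|y - B| \leq r$ hold: for the forward implication each term is bounded by the maximum, and for the reverse implication $r$ is then an upper bound for both terms, hence for their maximum. The right-hand condition is precisely the one defining the square of side length $r$ centered at $(A, B)$, so the square and the ball describe the same set of points. I do not expect any obstacle here; the statement is an immediate consequence of having chosen the $\ell^\infty$-type norm $|(u,v)| = \max(|u|,|v|)$ on $\F^2$, and the same one-line argument shows more generally that the analogous box and ball in $\F^n$ coincide for every $n \geq 1$. (Note in particular that the ultrametric inequality plays no role in this particular lemma.)
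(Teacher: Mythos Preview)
Your proof is correct and matches the paper's approach: the paper does not even write out a proof for this lemma, treating it as immediate from the definition $|(u,v)| = \max(|u|,|v|)$, which is exactly the observation you unwind. Your remark that the ultrametric inequality plays no role here is also accurate.
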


By the ultrametric triangle inequality, intervals can be centered at any point in the interval.
\begin{lemma}\label{center}
Fix an interval $I(x_0, r)$. For any $b \in I(x_0, r)$, we have
$I(x_0, r) = I(b, r)$.
\end{lemma}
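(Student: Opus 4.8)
The plan is to establish the set equality by proving the two inclusions $I(x_0, r) \subseteq I(b, r)$ and $I(b, r) \subseteq I(x_0, r)$ directly from the ultrametric triangle inequality, and then to observe that the second inclusion follows from the first by symmetry of the hypothesis.

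First I would fix $b \in I(x_0, r)$, which by definition means $|b - x_0| \leq r$. To prove $I(x_0, r) \subseteq I(b, r)$, take an arbitrary $\xi \in I(x_0, r)$, so that $|\xi - x_0| \leq r$. Writing $\xi - b = (\xi - x_0) - (b - x_0)$ and applying the ultrametric triangle inequality yields $|\xi - b| \leq \max(|\xi - x_0|, |b - x_0|) \leq r$, hence $\xi \in I(b, r)$.

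For the reverse inclusion, the key remark is that the hypothesis $|b - x_0| \leq r$ is symmetric in $b$ and $x_0$: since $|x_0 - b| = |b - x_0| \leq r$, the point $x_0$ lies in $I(b, r)$, so applying the inclusion just proved with the roles of $b$ and $x_0$ interchanged gives $I(b, r) \subseteq I(x_0, r)$. Together the two inclusions give $I(x_0, r) = I(b, r)$.

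I do not expect a genuine obstacle here, as the argument is a single application of $|x + y| \leq \max(|x|, |y|)$. The one point worth stating carefully is that there is no distinguished ``center'': the content of the lemma is precisely that every point of an ultrametric ball can serve as its center, which is exactly what the symmetry observation above makes explicit.
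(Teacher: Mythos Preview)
Your proof is correct and essentially identical to the paper's: both establish one inclusion via the ultrametric triangle inequality and obtain the other by the symmetry $|x_0-b|=|b-x_0|\le r$. The only cosmetic difference is which inclusion is written out explicitly.
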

\begin{proof}
Fix arbitrary $b \in I(x_0, r)$, then $|b - x_0| \leq r$. 
We only show that $\{\xi \in \F: |\xi - b| \leq r\} \subset I(x_0, r)$.
But this follows from the ultrametric triangle inequality:
$|\xi - x_0| = |\xi - b + b - x_0| \leq \max(|\xi - b|, |b - x_0|) \leq r.$
The reverse containment is true by interchanging the roles of $x_0$ and $b$.
\end{proof}

Lemma \ref{center} then is able to show that two intervals of the same length 
are either exactly the same or completely disjoint.
\begin{cor}\label{intintersect}
If two intervals (potentially of different lengths) intersect, then one is contained
inside the other.
\end{cor}
\begin{proof}
Let $I_1$ and $I_2$ be two intervals with $|I_1| = r \leq s = |I_2|$. By hypothesis,
we can find an $a \in I_1 \cap I_2$. By Lemma \ref{center},
$I_1 = I(a, r)$ and $I_2 = I(a, s)$
and hence $I_1 \subset I_2$.
\end{proof}

Corollary \ref{intintersect}  shows that intervals in $\F$ act like dyadic intervals in $\R$. 
Furthermore, Lemma \ref{lengthpart} shows that two distinct intervals of two (potentially different) lengths
are going to be at least separated by $q$ times the length of the smaller interval. 
We can also extend Corollary \ref{intintersect} to products of intervals.

\begin{cor}
If two squares (potentially of different side lengths) intersect, then one
is contained inside the other.
\end{cor}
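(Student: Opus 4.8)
The plan is to reduce to the one-dimensional statement in Corollary \ref{intintersect} by arguing coordinate by coordinate. Write the two squares as products of intervals: $S_1 = I_1 \times I_1'$ with $|I_1| = |I_1'| = r$ and $S_2 = I_2 \times I_2'$ with $|I_2| = |I_2'| = s$, and assume without loss of generality that $r \leq s$.

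First I would use the hypothesis $S_1 \cap S_2 \neq \emptyset$ to choose a point $(a, b) \in S_1 \cap S_2$. Then $a \in I_1 \cap I_2$ and $b \in I_1' \cap I_2'$, so the pairs $\{I_1, I_2\}$ and $\{I_1', I_2'\}$ each consist of intersecting intervals. By Lemma \ref{center} we may recenter each at these common points: $I_1 = I(a, r)$, $I_2 = I(a, s)$, $I_1' = I(b, r)$, and $I_2' = I(b, s)$.

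Next, since $r \leq s$, the ultrametric triangle inequality gives $I(a, r) \subset I(a, s)$ and $I(b, r) \subset I(b, s)$, that is, $I_1 \subset I_2$ and $I_1' \subset I_2'$; this is exactly Corollary \ref{intintersect} applied to each pair, with the direction of containment pinned down by the side-length comparison $r \le s$. Taking products, $S_1 = I_1 \times I_1' \subset I_2 \times I_2' = S_2$, which is the desired conclusion.

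There is essentially no obstacle here: the proof is a routine coordinatewise application of the already-established one-dimensional fact, and the only point to watch is that one must invoke $r \leq s$ (rather than just Corollary \ref{intintersect}, which a priori allows either containment) to determine which square sits inside the other.
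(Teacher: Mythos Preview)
Your proof is correct and follows essentially the same approach as the paper: write each square as a product of intervals, pick a point in the intersection to reduce to coordinatewise intersections, and apply Corollary~\ref{intintersect} (with the $r \le s$ assumption fixing the direction of containment) to conclude $S_1 \subset S_2$.
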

\begin{proof}
Let $B = I \times J$ and $B' = I' \times J'$ with $|I| = |J| = r$ and $|I'| = |J'| = s$.
Without loss of generality we may assume that $r \leq s$. 
We can find a $(x_0, y_0) \in B \cap B'$. Therefore $x_0 \in I \cap I'$
and $y_0 \in J \cap J'$ and by Corollary \ref{intintersect}, $I \subset I'$ and $J \subset J'$.
Therefore $B \subset B'$.
\end{proof}

We end with the observation that sum of two balls of radius $r$ is still a ball of radius $r$ and
that the shear of a ball is still a ball. This should be compared to the real setting where
the sum of two balls of radius $r$ is a ball of radius $2r$ and the shear of a ball or a square is an ellipse or parallelogram, respectively.

\begin{lemma}\label{ballprop}
We have the following properties:
\begin{enumerate}[(i)]
\item For $a, b \in \F$, if $I =I(a, r)$
and $I' = I(b, r)$,
then $I + I' = I(a + b, r)$.
Similarly, for $c, d \in \F^2$, $B(c, r) + B(d, r) = B(c + d, r)$.
\item If $S_{a} = (\begin{smallmatrix} 1 & a \\ 0 & 1\end{smallmatrix})$
with $|a| \leq 1$, then $S_{a}B(0, 1) = B(0, 1)$. 
\end{enumerate}
\end{lemma}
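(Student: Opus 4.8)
The plan is to prove both parts by direct application of the ultrametric triangle inequality, exploiting in each case that the relevant map is invertible so that one containment automatically yields the reverse.

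For part $(i)$, I would first treat the one-dimensional statement. Given $\xi \in I(a,r)$ and $\eta \in I(b,r)$, the ultrametric inequality gives $|(\xi + \eta) - (a + b)| \leq \max(|\xi - a|, |\eta - b|) \leq r$, which shows $I + I' \subseteq I(a + b, r)$. For the reverse containment, given $\zeta \in I(a + b, r)$, I would write $\zeta = (\zeta - b) + b$ and observe that $\zeta - b \in I(a, r)$ (since $|(\zeta - b) - a| = |\zeta - (a+b)| \leq r$) while $b \in I' = I(b, r)$ trivially, so $\zeta \in I + I'$. The two-dimensional statement $B(c, r) + B(d, r) = B(c + d, r)$ then follows either coordinatewise, using Corollary \ref{squaresame} to view the balls as products of intervals, or by repeating the same computation verbatim with $|\cdot|$ on $\F^2$.

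For part $(ii)$, I would use that $B(0,1)$ is the square $\{(x, y) \in \F^2 : |x| \leq 1, |y| \leq 1\}$. For such $(x, y)$, we have $S_a(x, y) = (x + ay, y)$, and since $|a| \leq 1$ the ultrametric inequality gives $|x + ay| \leq \max(|x|, |a||y|) \leq 1$ while $|y| \leq 1$, so $S_a B(0,1) \subseteq B(0,1)$. Since $S_a^{-1} = S_{-a}$ with $|-a| = |a| \leq 1$, the same argument applied to $S_{-a}$ gives $B(0,1) = S_a S_{-a} B(0,1) \subseteq S_a B(0,1)$, hence equality.

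I do not anticipate a genuine obstacle: the whole content is the equality (and sub-additivity) case of the ultrametric triangle inequality together with invertibility of shears. The only point needing a little care is ensuring the reverse containment in $(i)$ is not circular, which it is not, since one exhibits an explicit decomposition of an arbitrary element of $I(a+b, r)$.
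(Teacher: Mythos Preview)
Your proposal is correct and matches the paper's proof essentially verbatim: both parts use the ultrametric triangle inequality for the forward containment and invertibility (the explicit decomposition $\zeta = (\zeta - b) + b$ in (i), and $S_a^{-1} = S_{-a}$ in (ii)) for the reverse. One minor point: the fact that balls in $\F^2$ are products of intervals is the unnamed lemma immediately preceding Lemma~\ref{center}, not Corollary~\ref{squaresame}, though your alternative of repeating the computation with the $\F^2$ norm works just as well.
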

\begin{proof}
To prove
the first part of $(i)$, if $\alpha$ and $\beta$ were such that $|\alpha - a| \leq r$
and $|\beta - b| \leq r$, then $|(\alpha + \beta) - (a + b)| \leq \max(|\alpha - a|, |\beta - b|) \leq r$ and hence $\alpha + \beta \in I(a + b, r)$.
On the other hand, if $\xi$ is such that $|\xi - (a + b)| \leq r$, then
write $\xi = (\xi - b) + b$. We have $\xi - b \in I$ and $b \in I'$.
The second part of $(i)$ follows from the first part of $(i)$
and the observation that balls of radius $r$ are just products of two intervals
of length $r$.

Next to prove $(ii)$,
since $S_{a}^{-1} = (\begin{smallmatrix} 1 & -a \\ 0 & 1\end{smallmatrix})$, it just suffices
to show that $S_{a}B(0, 1) \subset B(0, 1)$. 
But this follows from that $|x + 2ay| \leq \max(|x|, |2ay|) \leq 1$
for $|x|, |y| \leq 1$ where here we have also used that $q$ is an odd prime and so $|2| = 1$.
\end{proof}

\subsection{The Fourier transform on $\F$}
We now summarize the relevant facts about the Fourier transform over $\F$.
Let $\chi$ be the additive character of $\F$ that is equal to 1 on $\O$
and nontrivial on $q^{-1}\O$. This is the analogue of $e^{2\pi i x}$.
Up to isomorphism, the character is essentially unique,
given by $\chi(x) := e(\sum_{j = k}^{-1}a_{j}q^{j})$ if $x = \sum_{j \geq k}a_{j}q^{j}$. 
Therefore the small angle approximation in $\F$: 
``If $|\xi| \leq 1$, then $\chi(\xi) = 1$."
is rigorously true.

In $\F^d$, the Schwartz functions are finite linear combinations of characteristic
functions of cubes in $\F^d$. The Fourier transform of such a function $f$
is defined by
$$\wh{f}(\xi) := \int_{\F^d}f(x)\chi(-x \cdot \xi)\, dx$$
where here $dx$ is the Haar measure on $\F^d$ normalized so that $\O^d$ has volume 1.
The theory of integration and the Fourier transform in $\F$ is essentially the same
as in $\R$.
We refer the interested reader to \cite[Section 4]{VVZ} for more details.

The key property obeyed by the Fourier transform is that the
Fourier transform of the unit interval is the unit interval and so the indicator function of the unit
intervals plays the same role as Gaussians in $\R$. This should
be compared to the real setting where the Fourier transform of the unit interval is a sinc function. More precisely:

\begin{prop}\label{ballft}
We have that
$\wh{1}_{\O} = 1_{\O}$. 
\end{prop}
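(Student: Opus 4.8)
The plan is to compute $\wh{1}_{\O}$ straight from the definition and split into two cases according to the size of the frequency variable. Writing $\wh{1}_{\O}(\xi) = \int_{\F} 1_{\O}(x)\chi(-x\xi)\, dx = \int_{\O}\chi(-x\xi)\, dx$, I would first treat the range $|\xi| \leq 1$. Here, for every $x \in \O$, multiplicativity of $|\cdot|$ gives $|x\xi| = |x||\xi| \leq 1$, so the small angle statement ``$|\eta| \leq 1 \implies \chi(\eta) = 1$'' forces $\chi(-x\xi) = 1$. Thus the integrand is identically $1$ on $\O$, and since the Haar measure is normalized so that $\O$ has volume $1$, we get $\wh{1}_{\O}(\xi) = 1 = 1_{\O}(\xi)$ on this range.

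The remaining case is $|\xi| \geq q$, where the claim is that the integral vanishes. The key device is translation invariance of the Haar measure together with the fact (Lemma \ref{ballprop}(i), or simply the ultrametric inequality) that $\O + x_0 = \O$ for any $x_0 \in \O$: for such an $x_0$,
\[
\int_{\O}\chi(-x\xi)\, dx = \int_{\O}\chi(-(x + x_0)\xi)\, dx = \chi(-x_0\xi)\int_{\O}\chi(-x\xi)\, dx,
\]
so that $(1 - \chi(-x_0\xi))\int_{\O}\chi(-x\xi)\,dx = 0$. Hence it is enough to exhibit a single $x_0 \in \O$ with $\chi(-x_0\xi) \neq 1$. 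To do this, write $\xi = \sum_{j \geq k}a_j q^j$ with $a_k \neq 0$; since $|\xi| \geq q$ we have $k \leq -1$, so $x_0 := q^{-k-1} \in \O$. Then the $q^{-1}$-coefficient in the expansion of $x_0\xi$ equals $a_k \in \{1,\dots,q-1\}$, so by the explicit description of $\chi$ we have $\chi(-x_0\xi) = e(-a_k/q) \neq 1$, as desired. (Equivalently, one can note $\{x\xi : x \in \O\} = q^{|k|}\O \supseteq q^{-1}\O$, and invoke directly the hypothesis that $\chi$ is nontrivial on $q^{-1}\O$ to produce such an $x_0$.)

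I do not anticipate a genuine obstacle here; the only points requiring care are the normalization of the Haar measure (so that the $|\xi| \leq 1$ integral is exactly $1$) and the precise action of $\chi$ on negative powers of $q$, both of which are recorded in the discussion preceding the statement. Conceptually, the whole argument is the standard fact that the average of a character of the compact group $\O$ is $1$ if the character is trivial and $0$ otherwise, applied to the character $x \mapsto \chi(-x\xi)$, which is trivial on $\O$ precisely when $\chi$ is trivial on $\xi\O$, i.e. precisely when $|\xi| \leq 1$.
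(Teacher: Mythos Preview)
Your proof is correct and follows essentially the same route as the paper's: both split into $|\xi|\le 1$ and $|\xi|\ge q$, use that $\chi$ is trivial on $\O$ for the first case, and use the translation-invariance trick $\int_{\O}\chi(-x\xi)\,dx=\chi(-x_0\xi)\int_{\O}\chi(-x\xi)\,dx$ for the second, the only difference being that the paper picks any $x_0$ with $|x_0|=1$ (relying on the fact that $|y|\ge q\Rightarrow\chi(y)\ne 1$) whereas you make the more explicit choice $x_0=q^{-k-1}$. One small slip in your parenthetical alternative: $\xi\O=q^{k}\O$, not $q^{|k|}\O$; with the correct exponent the containment $q^{k}\O\supseteq q^{-1}\O$ indeed holds.
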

\begin{proof}
We need to show that
\begin{align}\label{fttarget}
\int_{\O}\chi(-x\cdot \xi)\, dx = 1_{\O}(\xi).
\end{align}
If $\xi \in \O$, then $|-x\xi| \leq 1$ and hence $-x \xi \in \O$.
Since $\chi$ is trivial on $\O$ and $\O$ has volume 1, it follows
that the left hand side of \eqref{fttarget} is equal to 1.

On the other hand if $\xi \not\in \O$, choose an $x_{0}$ such that
$|x_{0}| = 1$. Since if $|\xi| > 1$, then $|\xi| \geq q$, this implies that $|-x_{0}\xi| \geq q$ and so $\chi(-x_{0} \xi) \neq 1$.
Additionally, the ultrametric triangle inequality gives that $\O = x_0 + \O$. 
We then have
\begin{align*}
\int_{\O}\chi(-x \xi)\, dx &= \chi(-x_{0} \xi)\int_{\O}\chi(-(x - x_{0}) \xi)\, dx\\
& = \chi(-x_{0} \xi)\int_{x_0 + \O}\chi(-y \xi)\, dy = \chi(-x_{0} \xi)\int_{\O}\chi(-y  \xi)\, dy.
\end{align*}
But since $\chi(-x_{0} \xi) \neq 1$, we must have $\int_{\O}\chi(-x \xi)\, dx = 0$.
\end{proof}

Since $B(0, 1) = \O^2$, Proposition \ref{ballft} immediately shows that the Fourier transform of $1_{B(0, 1)}$ is also $1_{B(0, 1)}$.
Additionally, by a change of variables, this shows for example that the Fourier transform of $1_{B(0, R)}$ is $R^{2}1_{B(0, 1/R)}$.

\section{Geometry for parabola decoupling and wavepacket decomposition}\label{moregeom}
Having set up the basic geometric and analytic facts over $\F$, we now move to observations that are most relevant
to parabola decoupling over $\F$.

\subsection{The neighborhood of the parabola}\label{nghbd}
We first finish some easy consequences about $\ta_J$.
In addition to \eqref{tajqp}, for $J \subset \O$, let $\tau_{J} := B((a, a^2), |J|)$ for any $a \in J$.
The definition of $\tau_{J}$ is independent of the choice of $a \in J$ by
the ultrametric triangle inequality. Additionally,
$\ta_J \subset \tau_{J}$ since if $(\xi, \eta) \in \ta_{J}$,
then $|\xi - a| \leq |J|$ since $\xi \in J$ and
\begin{align*}
|\eta - a^2| = |\eta - \xi^2 + \xi^2 - a^2| \leq \max(|\eta - \xi^2|, |\xi - a||\xi + a|) \leq |J|
\end{align*}
where in the last inequality we have also used that $\xi, a \in \O$.

While $\ta_{J}$ is the $|J|^{2}$ neighborhood of the piece of parabola
above $J$, it also turns out at this scale, the curvature of $\ta_{J}$ is no longer visible.

\begin{lemma}\label{curvestraight}
For any $J \subset \O$, $\ta_{J}$ coincides with the parallelogram
\begin{align*}
\{(\xi, \eta) \in \F^2: |\xi - a| \leq |J|, |\eta - 2a\xi + a^2| \leq |J|^{2}\}
\end{align*}
where $a$ is any point in $J$.
\end{lemma}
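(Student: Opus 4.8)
Fix $a \in J$. The plan is to compare the two defining conditions term by term. First, by Lemma~\ref{center} the condition $\xi \in J$ appearing in the definition \eqref{tajqp} of $\ta_J$ is exactly the condition $|\xi - a| \le |J|$, which is the first condition defining the parallelogram. So it remains to show that, for $\xi$ with $|\xi - a| \le |J|$, the condition $|\eta - \xi^2| \le |J|^2$ is equivalent to $|\eta - 2a\xi + a^2| \le |J|^2$.

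The key observation is the algebraic identity
\begin{align*}
\eta - 2a\xi + a^2 = (\eta - \xi^2) + (\xi - a)^2,
\end{align*}
together with the elementary estimate $|(\xi - a)^2| = |\xi - a|^2 \le |J|^2$, valid on the interval $J$. From here, the ultrametric triangle inequality gives
\begin{align*}
|\eta - 2a\xi + a^2| \le \max(|\eta - \xi^2|, |\xi - a|^2) \le \max(|\eta - \xi^2|, |J|^2),
\end{align*}
so $|\eta - \xi^2| \le |J|^2$ implies $|\eta - 2a\xi + a^2| \le |J|^2$; running the same identity in the form $\eta - \xi^2 = (\eta - 2a\xi + a^2) - (\xi - a)^2$ gives the reverse implication. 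This establishes the set equality.

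There is no real obstacle here: the statement is essentially the remark from Section~\ref{nghbd} that $\xi^2 - 2a\xi + a^2 = (\xi-a)^2$ is negligible at scale $|J|^2$ on $J$, made into an exact identity by the ultrametric inequality (the equality case is not even needed, only the inequality). The only point worth flagging is that $|J| \le 1$ since $J \subset \O$, which is what guarantees $|\xi - a|^2 \le |J|^2$ rather than merely $|\xi - a|^2 \le |J|$; this is automatic from the hypotheses.
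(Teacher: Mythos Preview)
Your proof is correct and follows essentially the same approach as the paper: use Lemma~\ref{center} for the first coordinate, then the identity $\eta - \xi^2 = (\eta - 2a\xi + a^2) - (\xi - a)^2$ together with the ultrametric inequality for the second. One small remark: your final paragraph about needing $|J| \le 1$ is unnecessary, since $|\xi - a| \le |J|$ already gives $|(\xi - a)^2| = |\xi - a|^2 \le |J|^2$ by multiplicativity of the norm, with no constraint on $|J|$.
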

\begin{proof}
Fix arbitrary $a \in J$. By Lemma \ref{center},
$J = \{\xi \in \F: |\xi - a| \leq |J|\}$. Next,
$|\eta - \xi^2| = |(\eta - 2a\xi + a^2) - (\xi - a)^{2}|$
and hence if $|\xi - a| \leq |J|$, then
$|\eta - \xi^2| \leq |J|^2$ if and only if
$|\eta - 2a\xi + a^2| \leq |J|^2$. 
\end{proof}

Finally, the following lemma will be useful later in computing the support of a convolution.
\begin{lemma}\label{supportsum}
For any $J \subset \O$, $\ta_{J} + B(0, |J|^{2}) = \ta_{J}$.
\end{lemma}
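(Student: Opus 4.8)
The plan is to show both inclusions. The inclusion $\ta_J \subset \ta_J + B(0,|J|^2)$ is immediate since $0 \in B(0,|J|^2)$. For the reverse inclusion, one fixes a point $(\xi,\eta) \in \ta_J$ and a point $(u,v) \in B(0,|J|^2)$, so $|u| \le |J|^2$ and $|v| \le |J|^2$, and checks that $(\xi,\eta) + (u,v) = (\xi + u, \eta + v)$ still lies in $\ta_J$. The first coordinate condition requires $\xi + u \in J$; writing $J = I(a,|J|)$ via Lemma \ref{center} for any $a \in J$, we have $|\xi - a| \le |J|$ and $|u| \le |J|^2 \le |J|$ (since $|J| \le 1$), so $|(\xi+u) - a| \le \max(|\xi - a|, |u|) \le |J|$ by the ultrametric inequality, hence $\xi + u \in J$. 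The second coordinate condition requires $|(\eta + v) - (\xi + u)^2| \le |J|^2$.

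For that second condition I would expand $(\eta + v) - (\xi + u)^2 = (\eta - \xi^2) + v - 2\xi u - u^2$ and estimate each term by the ultrametric triangle inequality: $|\eta - \xi^2| \le |J|^2$ since $(\xi,\eta) \in \ta_J$; $|v| \le |J|^2$ by hypothesis; $|2\xi u| = |\xi||u| \le 1 \cdot |J|^2 = |J|^2$ using $|2| = 1$ (as $q$ is odd), $\xi \in \O$, and $|u| \le |J|^2$; and $|u^2| = |u|^2 \le |J|^4 \le |J|^2$. Taking the maximum gives $|(\eta+v) - (\xi+u)^2| \le |J|^2$, so $(\xi + u, \eta + v) \in \ta_J$ as desired.

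Alternatively, and perhaps more cleanly, I would note that $\ta_J$ is the parallelogram described in Lemma \ref{curvestraight}, namely $\{(\xi,\eta) : |\xi - a| \le |J|, \ |\eta - 2a\xi + a^2| \le |J|^2\}$ for any fixed $a \in J$, and that this is exactly the image $S_a'(a, a^2) + S_a' B(0, \text{something})$ under a shear; but since the excerpt's shear lemma (Lemma \ref{ballprop}(ii)) is phrased for $B(0,1)$ only, the direct computation above is simpler to present. There is no real obstacle here: the only point requiring the slightest care is remembering that $|u^2| = |u|^2$ can be as small as we like but is bounded by $|J|^4$, which is $\le |J|^2$ since $|J| \le 1$, and that all cross terms pick up a favorable factor of $|\xi| \le 1$ rather than blowing up — this is precisely the $\F$-analogue of the fact that $\ta_J + B(0,|J|^2)$ fattens the parabola-neighborhood by a negligible amount in the real setting, made exact here.
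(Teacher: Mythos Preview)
Your proof is correct and follows essentially the same approach as the paper's: both show the nontrivial inclusion by taking a point of $\ta_J$ plus a point of $B(0,|J|^2)$, checking the first coordinate lands in $J$ via the ultrametric inequality, and then expanding $(\eta+v)-(\xi+u)^2$ and bounding each of the four terms by $|J|^2$ using $|2|=1$ and $|\xi|\le 1$. The only cosmetic difference is your explicit remark that $|J|\le 1$ is needed to get $|u|\le|J|$ and $|u|^2\le|J|^2$, which the paper leaves implicit.
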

\begin{proof}
The $\supset$ direction is immediate. Let $(\xi_{0}, \eta_{0}) \in \ta_{J}$
and $(r, s) \in B(0, |J|^2)$. Write $J = \{\xi \in \F: |\xi - a| \leq |J|\}$
for some $a \in J$. Then $|(\xi_0 + r) - a| \leq \max(|\xi_0 - a|, |r|) \leq |J|$
and hence $\xi_0 + r \in J$. Next,
\begin{align*}
|(\eta_0 + s) - (\xi_{0} + r)^{2}| &= |(\eta_{0} - \xi_{0}^{2}) + (s - 2r\xi_{0} - r^2)|\\
& \leq \max(|\eta_{0} - \xi_{0}^{2}|, |s|, |2r\xi_{0}|, |r^2|) \leq |J|^2
\end{align*}
where here we have also used that $q$ is an odd prime and so $|2| = 1$.
\end{proof}

\subsection{Geometry of tubes and wavepacket decomposition}
Fix a $\delta \in q^{-\N}$. We define the shorthand 
$$\ta_{\delta} := \delta \O \times \delta^{2}\O \quad \text{and} \quad T_{\delta} := \delta^{-1}\O \times \delta^{-2}\O.$$
They are dual to each other in the sense that
$T_{\delta} = \{x \in \F^{2}: |x \cdot \xi| \leq 1 \text{ for all } \xi \in \ta_{\delta}\}.$
Since $\delta\O$ can be partitioned into $\delta^{-1}$ many intervals
of length $\delta^{2}$, it follows that $\ta_{\delta}$ is the union of
$\delta^{-1}$ many squares of side length $\delta^{2}$.
Similarly, $T_{\delta}$ is the union of $\delta^{-1}$ many squares of side
length $\delta^{-1}$.

For $a \in \O$, let $M_{a} = (\begin{smallmatrix} 1 & 0 \\2a & 1\end{smallmatrix})$.
Then by Lemma \ref{curvestraight}, for each $K \in P_{\delta}$, we have
\begin{align}\label{takma}
\ta_{K} = \begin{pmatrix} a \\a^2\end{pmatrix} + M_{a}\ta_{\delta}
\end{align}
for any $a \in K$.
The right hand side is independent of $a \in K$ since if $b \in K$, then
\begin{align*}
\begin{pmatrix} b \\ b^{2}\end{pmatrix} = \begin{pmatrix} a\\a^{2} \end{pmatrix} + \begin{pmatrix} 1 \\2a\end{pmatrix}(b - a) + \begin{pmatrix} 0 \\ 1\end{pmatrix} (b - a)^{2} \in \begin{pmatrix} a \\a^{2}\end{pmatrix} + M_{a}\ta_{\delta}
\end{align*}
and $M_{a} = M_{b}(\begin{smallmatrix} 1 & 0 \\2(a - b) & 1\end{smallmatrix})$
and this latter matrix preserves $\ta_{\delta}$ by the ultrametric triangle inequality.

For $K \in P_{\delta}$ and $a \in K$, let $T_{0, K}$ be the dual parallelogram
to $\ta_{K}$ centered at the origin given by
$$T_{0, K} = \{x \in \F^2: |x \cdot (\xi - \begin{pmatrix} a \\a^2\end{pmatrix})| \leq 1 \text{ for all } \xi \in \ta_{K}\}.$$
Using \eqref{takma},
\begin{align*}
T_{0, K} &= \{x \in \F^2: |x_{1} + 2ax_{2}| \leq \delta^{-1}, |x_{2}| \leq \delta^{-2}\} = \{x \in \F^2: M_{a}^{T}x \in T_{\delta}\} = M_{a}^{-T}T_{\delta}
\end{align*}
for any $a \in K$. This parallelogram depends only on $K$ but not on the choice
of $a \in K$ since for any other $b \in K$, 
$M_{a}^{-T} = M_{b}^{-T}(\begin{smallmatrix}1 & 2(b - a )\\0 & 1 \end{smallmatrix})$
and $|2(b - a)| \leq \delta$ and so this second matrix preserves $T_{\delta}$
by the ultrametric triangle inequality.
In analogy to the real setting, we will call $(-2a, 1)$ the direction of $T_{0, K}$.

Since $T_{\delta}$ is the union of $\delta^{-1}$ many squares of side length
$\delta^{-1}$, by Lemma \ref{ballprop}(ii), it follows that
$T_{0, K}$ is also the union of $\delta^{-1}$ many squares of side length $\delta^{-1}$.
Heuristically, it is useful to think of $T_{0, K}$ as a $\delta^{-1} \times \delta^{-2}$ ``tube" 
pointing in the direction $(-2a, 1)$.
\begin{center}
\begin{figure}[t!]
\captionsetup[subfigure]{labelformat=empty}
\centering
\subfloat[$T_{0, I(0, 1/3)}$]{
\begin{tikzpicture}[scale=0.5]
    \draw (0, 0) grid (9, 9);
    \draw[very thick, scale=3] (0, 0) grid (3, 3);

    \setcounter{row}{1}
    \setrow{}{}{}{}{}{}{}{}{}
    \setrow{}{}{}{}{}{}{}{}{}
    \setrow{}{}{}{}{}{}{}{}{}
	\setrow{}{}{}{}{}{}{}{}{}
	\setrow{}{}{}{}{}{}{}{}{}
	\setrow{}{}{}{}{}{}{}{}{}
	\setrow{}{}{}{}{}{}{}{}{}
	\setrow{}{}{}{}{}{}{}{}{}
	\setrow{}{}{}{}{}{}{}{}{}

	\node[anchor=center] at (-0.5, 0.5) {\tiny 00};
	\node[anchor=center] at (-0.5, 1.5) {\tiny 01};
	\node[anchor=center] at (-0.5, 2.5) {\tiny 02};
	\node[anchor=center] at (-0.5, 3.5) {\tiny 10};
	\node[anchor=center] at (-0.5, 4.5) {\tiny 11};
	\node[anchor=center] at (-0.5, 5.5) {\tiny 12};
	\node[anchor=center] at (-0.5, 6.5) {\tiny 20};
	\node[anchor=center] at (-0.5, 7.5) {\tiny 21};
	\node[anchor=center] at (-0.5, 8.5) {\tiny 22};

	\node[anchor=center] at (0.5, -0.5) {\tiny 00};
	\node[anchor=center] at (1.5, -0.5) {\tiny 01};
	\node[anchor=center] at (2.5, -0.5) {\tiny 02};
	\node[anchor=center] at (3.5, -0.5) {\tiny 10};
	\node[anchor=center] at (4.5, -0.5) {\tiny 11};
	\node[anchor=center] at (5.5, -0.5) {\tiny 12};
	\node[anchor=center] at (6.5, -0.5) {\tiny 20};
	\node[anchor=center] at (7.5, -0.5) {\tiny 21};
	\node[anchor=center] at (8.5, -0.5) {\tiny 22};

	\highlightrectangle{1}{1}{3}{3}
	\highlightrectangle{4}{1}{6}{3}
	\highlightrectangle{7}{1}{9}{3}
\end{tikzpicture}
}
\subfloat[$T_{0, I(1, 1/3)}$]{
\begin{tikzpicture}[scale=0.5]
    \draw (0, 0) grid (9, 9);
    \draw[very thick, scale=3] (0, 0) grid (3, 3);

   === \setcounter{row}{1}
    \setrow{}{}{}{}{}{}{}{}{}
    \setrow{}{}{}{}{}{}{}{}{}
    \setrow{}{}{}{}{}{}{}{}{}
	\setrow{}{}{}{}{}{}{}{}{}
	\setrow{}{}{}{}{}{}{}{}{}
	\setrow{}{}{}{}{}{}{}{}{}
	\setrow{}{}{}{}{}{}{}{}{}
	\setrow{}{}{}{}{}{}{}{}{}
	\setrow{}{}{}{}{}{}{}{}{}

	\node[anchor=center] at (-0.5, 0.5) {\tiny 00};
	\node[anchor=center] at (-0.5, 1.5) {\tiny 01};
	\node[anchor=center] at (-0.5, 2.5) {\tiny 02};
	\node[anchor=center] at (-0.5, 3.5) {\tiny 10};
	\node[anchor=center] at (-0.5, 4.5) {\tiny 11};
	\node[anchor=center] at (-0.5, 5.5) {\tiny 12};
	\node[anchor=center] at (-0.5, 6.5) {\tiny 20};
	\node[anchor=center] at (-0.5, 7.5) {\tiny 21};
	\node[anchor=center] at (-0.5, 8.5) {\tiny 22};

	\node[anchor=center] at (0.5, -0.5) {\tiny 00};
	\node[anchor=center] at (1.5, -0.5) {\tiny 01};
	\node[anchor=center] at (2.5, -0.5) {\tiny 02};
	\node[anchor=center] at (3.5, -0.5) {\tiny 10};
	\node[anchor=center] at (4.5, -0.5) {\tiny 11};
	\node[anchor=center] at (5.5, -0.5) {\tiny 12};
	\node[anchor=center] at (6.5, -0.5) {\tiny 20};
	\node[anchor=center] at (7.5, -0.5) {\tiny 21};
	\node[anchor=center] at (8.5, -0.5) {\tiny 22};

	\highlightrectangle{7}{1}{9}{3}
	\highlightrectangle{4}{4}{6}{6}
	\highlightrectangle{1}{7}{3}{9}
\end{tikzpicture}
}
\subfloat[$T_{0, I(2, 1/3)}$]{
\begin{tikzpicture}[scale=0.5]
    \draw (0, 0) grid (9, 9);
    \draw[very thick, scale=3] (0, 0) grid (3, 3);

    \setcounter{row}{1}
    \setrow{}{}{}{}{}{}{}{}{}
    \setrow{}{}{}{}{}{}{}{}{}
    \setrow{}{}{}{}{}{}{}{}{}
	\setrow{}{}{}{}{}{}{}{}{}
	\setrow{}{}{}{}{}{}{}{}{}
	\setrow{}{}{}{}{}{}{}{}{}
	\setrow{}{}{}{}{}{}{}{}{}
	\setrow{}{}{}{}{}{}{}{}{}
	\setrow{}{}{}{}{}{}{}{}{}

	\node[anchor=center] at (-0.5, 0.5) {\tiny 00};
	\node[anchor=center] at (-0.5, 1.5) {\tiny 01};
	\node[anchor=center] at (-0.5, 2.5) {\tiny 02};
	\node[anchor=center] at (-0.5, 3.5) {\tiny 10};
	\node[anchor=center] at (-0.5, 4.5) {\tiny 11};
	\node[anchor=center] at (-0.5, 5.5) {\tiny 12};
	\node[anchor=center] at (-0.5, 6.5) {\tiny 20};
	\node[anchor=center] at (-0.5, 7.5) {\tiny 21};
	\node[anchor=center] at (-0.5, 8.5) {\tiny 22};

	\node[anchor=center] at (0.5, -0.5) {\tiny 00};
	\node[anchor=center] at (1.5, -0.5) {\tiny 01};
	\node[anchor=center] at (2.5, -0.5) {\tiny 02};
	\node[anchor=center] at (3.5, -0.5) {\tiny 10};
	\node[anchor=center] at (4.5, -0.5) {\tiny 11};
	\node[anchor=center] at (5.5, -0.5) {\tiny 12};
	\node[anchor=center] at (6.5, -0.5) {\tiny 20};
	\node[anchor=center] at (7.5, -0.5) {\tiny 21};
	\node[anchor=center] at (8.5, -0.5) {\tiny 22};

    \highlightrectangle{7}{1}{9}{3}
	\highlightrectangle{1}{4}{3}{6}
	\highlightrectangle{4}{7}{6}{9}
\end{tikzpicture}
}
\caption{In $\mathbb{Q}_{3}$, the three possible $3 \times 9$ parallelograms containing the origin in $B(0, 9)$ are: $T_{0, I(0, 1/3)} := \{x \in \Q_{3}^2: |x_1|_{3} \leq 3, |x_2|_{3} \leq 9\}$, $T_{0, I(1, 1/3)} := \{x \in \Q_{3}^2: |x_{1} + 2x_2|_{3} \leq 3, |x_{2}|_{3} \leq 9\}$, and $T_{0, I(2, 1/3)} := \{x \in \Q_{3}^2: |x_{1} + 4x_2|_{3} \leq 3, |x_{2}|_{3} \leq 9\}$. Here for example, ``12" denotes the elements in $\Q_{3}$ given by
$1 \cdot 3^{-2} + 2 \cdot 3^{-1} + \cdots$.}
\end{figure}
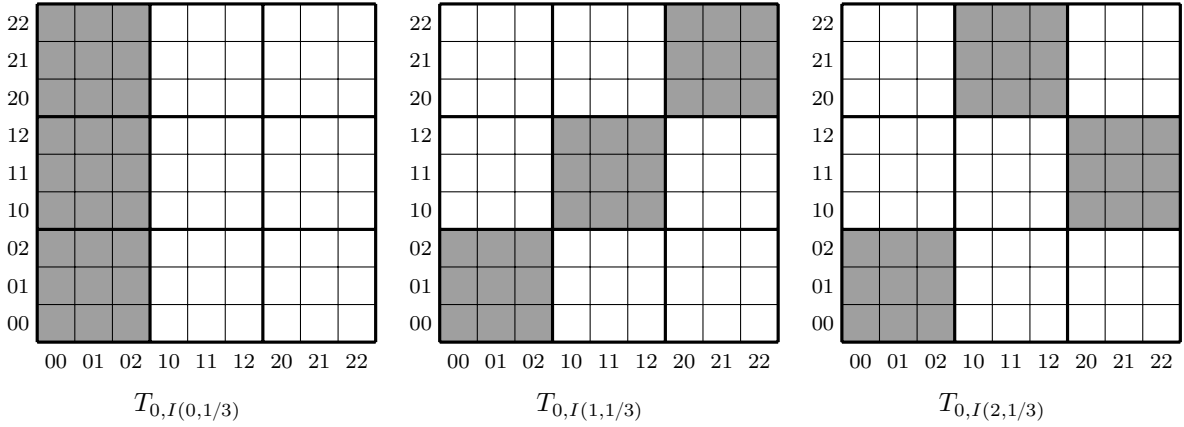
\end{center}
%
%
%
%
%
%
%
%
%
%
%
%
%
%
The translates of $T_{0, K}$ pointing in any direction completely partition any square of side length $\delta^{-2}$.
\begin{lemma}\label{tiling}
Let $\delta \in q^{-\N}$ and fix $K \in P_{\delta}$. Then
a $\delta^{-2} \times \delta^{-2}$ square in $\F^2$ can be partitioned into
$\delta^{-1}$ many translates $T_{0, K}$.
\end{lemma}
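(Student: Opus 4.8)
The plan is to reduce everything to the standard square $B(0,\delta^{-2}) = \delta^{-2}\O \times \delta^{-2}\O$, partition that square into $\delta^{-1}$ translates of $T_{\delta}$, and then push this partition forward under the shear $M_{a}^{-T} = (\begin{smallmatrix}1 & -2a\\ 0 & 1\end{smallmatrix})$, which by the formula $T_{0,K} = M_{a}^{-T}T_{\delta}$ derived just before the lemma carries $T_{\delta}$ onto $T_{0,K}$. For the reduction: an arbitrary $\delta^{-2}\times\delta^{-2}$ square $Q$ is a product of two intervals of length $\delta^{-2}$, each of which is a translate of $I(0,\delta^{-2})$, so $Q = w + B(0,\delta^{-2})$ for some $w \in \F^{2}$; hence if $B(0,\delta^{-2}) = \bigsqcup_{i}(v_{i} + T_{0,K})$ then $Q = \bigsqcup_{i}(w + v_{i} + T_{0,K})$, and it suffices to partition $B(0,\delta^{-2})$ itself.

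For the main step, apply Lemma \ref{lengthpart} rescaled by $\delta^{-2}$ to split the interval $\delta^{-2}\O$ into $\delta^{-1}$ intervals of length $\delta^{-1}$; writing each as $c + \delta^{-1}\O$ (using Lemma \ref{center}) gives
$$B(0,\delta^{-2}) = \bigsqcup_{c}\bigl((c,0) + (\delta^{-1}\O \times \delta^{-2}\O)\bigr) = \bigsqcup_{c}\bigl((c,0) + T_{\delta}\bigr),$$
a partition into $\delta^{-1}$ translates of $T_{\delta}$. Now fix $a \in K$. Since $M_{a}^{-T}$ is a linear bijection of $\F^{2}$ with $M_{a}^{-T}T_{\delta} = T_{0,K}$, applying it termwise turns the display above into a partition of $M_{a}^{-T}B(0,\delta^{-2})$ into $\delta^{-1}$ translates of $T_{0,K}$. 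It remains to check $M_{a}^{-T}B(0,\delta^{-2}) = B(0,\delta^{-2})$: since $a \in \O$ and $q$ is odd, $|{-2a}| = |a| \leq 1 \leq \delta^{-2}$, so the argument of Lemma \ref{ballprop}(ii), run with $B(0,\delta^{-2})$ in place of $B(0,1)$, gives exactly this. Combined with the reduction above, this proves the lemma.

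The computation is routine throughout; the only point deserving attention is the final identity $M_{a}^{-T}B(0,\delta^{-2}) = B(0,\delta^{-2})$ — over $\R$ a shear distorts a square into a parallelogram, whereas over $\F$ a shear of a ball is a ball of the same radius (Lemma \ref{ballprop}), which is what makes the partition work on the nose. One could also phrase the whole proof group-theoretically: $T_{\delta}$, $T_{0,K}$, and $B(0,\delta^{-2})$ are additive subgroups of $\F^{2}$ with $T_{0,K} \subset B(0,\delta^{-2})$, and comparing Haar measures — $|T_{0,K}| = |T_{\delta}| = \delta^{-3}$ since $\det M_{a}^{-T} = 1$, while $|B(0,\delta^{-2})| = \delta^{-4}$ — shows $T_{0,K}$ has index $\delta^{-1}$ in $B(0,\delta^{-2})$, so its cosets partition the square into $\delta^{-1}$ translates, and one then translates to a general square.
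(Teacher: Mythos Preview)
Your proof is correct and follows essentially the same approach as the paper: reduce to $B(0,\delta^{-2})$, partition it into $\delta^{-1}$ translates of $T_{\delta}$ via the decomposition of $\delta^{-2}\O$ into length-$\delta^{-1}$ intervals, and then apply $M_{a}^{-T}$ using that this shear preserves $B(0,\delta^{-2})$. The additional group-theoretic remark at the end is a nice alternative viewpoint not in the paper, but the main argument is the same.
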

\begin{proof}
Let $B$ be the square of side length $\delta^{-2}$.
Without loss of generality, we may assume that $B = B(0, \delta^{-2})$.
Since
\begin{align*}
\{x \in \F: |x| \leq \delta^{-2}\} = \bigcup_{t \in \mc{S}}\{x \in \F: |x - t| \leq \delta^{-1}\}
\end{align*}
where $\mc{S} = \{\sum_{-2N \leq j < -N}a_{j}q^{j}: a_{j} \in \{0, 1, \ldots, q - 1\}\}$
and $\delta = q^{-N}$,
we can write
$B = \bigcup_{t}T_{\delta} + (t, 0).$
Applying $M_{a}^{-T}$ to both sides, we obtain that
$$B = M_{a}^{-T}B = \bigcup_{t} M_{a}^{-T}T_{\delta} + (t, 0) = \bigcup_{t}T_{0, K} + (t, 0)$$
where in the first equality we have used the ultrametric triangle inequality.
\end{proof}
\begin{center}
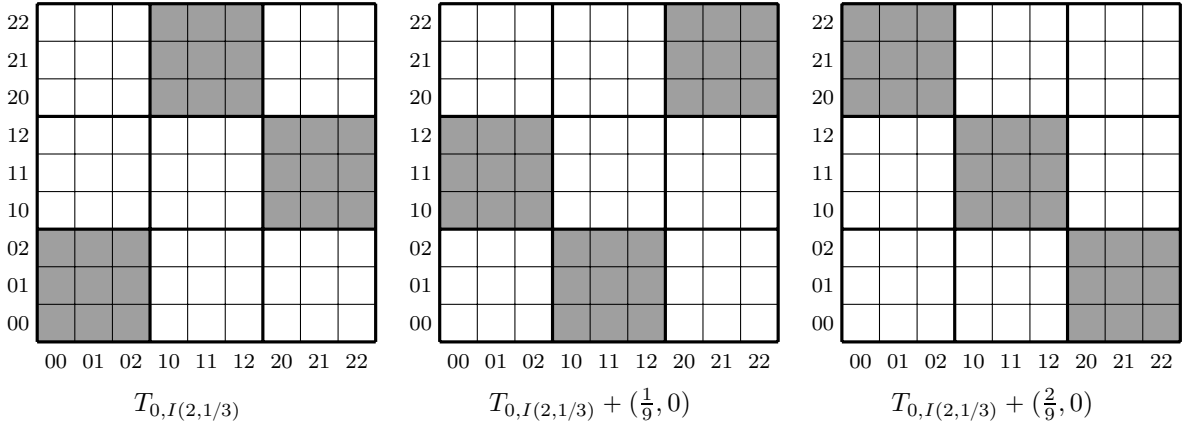
\begin{figure}[t!]
\captionsetup[subfigure]{labelformat=empty}
\centering
\subfloat[$T_{0, I(2, 1/3)}$]{
\begin{tikzpicture}[scale=0.5]
    \draw (0, 0) grid (9, 9);
    \draw[very thick, scale=3] (0, 0) grid (3, 3);

    \setcounter{row}{1}
    \setrow{}{}{}{}{}{}{}{}{}
    \setrow{}{}{}{}{}{}{}{}{}
    \setrow{}{}{}{}{}{}{}{}{}
	\setrow{}{}{}{}{}{}{}{}{}
	\setrow{}{}{}{}{}{}{}{}{}
	\setrow{}{}{}{}{}{}{}{}{}
	\setrow{}{}{}{}{}{}{}{}{}
	\setrow{}{}{}{}{}{}{}{}{}
	\setrow{}{}{}{}{}{}{}{}{}

	\node[anchor=center] at (-0.5, 0.5) {\tiny 00};
	\node[anchor=center] at (-0.5, 1.5) {\tiny 01};
	\node[anchor=center] at (-0.5, 2.5) {\tiny 02};
	\node[anchor=center] at (-0.5, 3.5) {\tiny 10};
	\node[anchor=center] at (-0.5, 4.5) {\tiny 11};
	\node[anchor=center] at (-0.5, 5.5) {\tiny 12};
	\node[anchor=center] at (-0.5, 6.5) {\tiny 20};
	\node[anchor=center] at (-0.5, 7.5) {\tiny 21};
	\node[anchor=center] at (-0.5, 8.5) {\tiny 22};

	\node[anchor=center] at (0.5, -0.5) {\tiny 00};
	\node[anchor=center] at (1.5, -0.5) {\tiny 01};
	\node[anchor=center] at (2.5, -0.5) {\tiny 02};
	\node[anchor=center] at (3.5, -0.5) {\tiny 10};
	\node[anchor=center] at (4.5, -0.5) {\tiny 11};
	\node[anchor=center] at (5.5, -0.5) {\tiny 12};
	\node[anchor=center] at (6.5, -0.5) {\tiny 20};
	\node[anchor=center] at (7.5, -0.5) {\tiny 21};
	\node[anchor=center] at (8.5, -0.5) {\tiny 22};

    \highlightrectangle{7}{1}{9}{3}
	\highlightrectangle{1}{4}{3}{6}
	\highlightrectangle{4}{7}{6}{9}
\end{tikzpicture}
}
\subfloat[$T_{0, I(2, 1/3)} + (\frac{1}{9}, 0)$]{
\begin{tikzpicture}[scale=0.5]
    \draw (0, 0) grid (9, 9);
    \draw[very thick, scale=3] (0, 0) grid (3, 3);

    \setcounter{row}{1}
    \setrow{}{}{}{}{}{}{}{}{}
    \setrow{}{}{}{}{}{}{}{}{}
    \setrow{}{}{}{}{}{}{}{}{}
	\setrow{}{}{}{}{}{}{}{}{}
	\setrow{}{}{}{}{}{}{}{}{}
	\setrow{}{}{}{}{}{}{}{}{}
	\setrow{}{}{}{}{}{}{}{}{}
	\setrow{}{}{}{}{}{}{}{}{}
	\setrow{}{}{}{}{}{}{}{}{}

	\node[anchor=center] at (-0.5, 0.5) {\tiny 00};
	\node[anchor=center] at (-0.5, 1.5) {\tiny 01};
	\node[anchor=center] at (-0.5, 2.5) {\tiny 02};
	\node[anchor=center] at (-0.5, 3.5) {\tiny 10};
	\node[anchor=center] at (-0.5, 4.5) {\tiny 11};
	\node[anchor=center] at (-0.5, 5.5) {\tiny 12};
	\node[anchor=center] at (-0.5, 6.5) {\tiny 20};
	\node[anchor=center] at (-0.5, 7.5) {\tiny 21};
	\node[anchor=center] at (-0.5, 8.5) {\tiny 22};

	\node[anchor=center] at (0.5, -0.5) {\tiny 00};
	\node[anchor=center] at (1.5, -0.5) {\tiny 01};
	\node[anchor=center] at (2.5, -0.5) {\tiny 02};
	\node[anchor=center] at (3.5, -0.5) {\tiny 10};
	\node[anchor=center] at (4.5, -0.5) {\tiny 11};
	\node[anchor=center] at (5.5, -0.5) {\tiny 12};
	\node[anchor=center] at (6.5, -0.5) {\tiny 20};
	\node[anchor=center] at (7.5, -0.5) {\tiny 21};
	\node[anchor=center] at (8.5, -0.5) {\tiny 22};

	\highlightrectangle{7}{4}{9}{6}
	\highlightrectangle{4}{1}{6}{3}
	\highlightrectangle{1}{7}{3}{9}
\end{tikzpicture}
}
\subfloat[$T_{0, I(2, 1/3)} + (\frac{2}{9}, 0)$]{
\begin{tikzpicture}[scale=0.5]
    \draw (0, 0) grid (9, 9);
    \draw[very thick, scale=3] (0, 0) grid (3, 3);

   === \setcounter{row}{1}
    \setrow{}{}{}{}{}{}{}{}{}
    \setrow{}{}{}{}{}{}{}{}{}
    \setrow{}{}{}{}{}{}{}{}{}
	\setrow{}{}{}{}{}{}{}{}{}
	\setrow{}{}{}{}{}{}{}{}{}
	\setrow{}{}{}{}{}{}{}{}{}
	\setrow{}{}{}{}{}{}{}{}{}
	\setrow{}{}{}{}{}{}{}{}{}
	\setrow{}{}{}{}{}{}{}{}{}

	\node[anchor=center] at (-0.5, 0.5) {\tiny 00};
	\node[anchor=center] at (-0.5, 1.5) {\tiny 01};
	\node[anchor=center] at (-0.5, 2.5) {\tiny 02};
	\node[anchor=center] at (-0.5, 3.5) {\tiny 10};
	\node[anchor=center] at (-0.5, 4.5) {\tiny 11};
	\node[anchor=center] at (-0.5, 5.5) {\tiny 12};
	\node[anchor=center] at (-0.5, 6.5) {\tiny 20};
	\node[anchor=center] at (-0.5, 7.5) {\tiny 21};
	\node[anchor=center] at (-0.5, 8.5) {\tiny 22};

	\node[anchor=center] at (0.5, -0.5) {\tiny 00};
	\node[anchor=center] at (1.5, -0.5) {\tiny 01};
	\node[anchor=center] at (2.5, -0.5) {\tiny 02};
	\node[anchor=center] at (3.5, -0.5) {\tiny 10};
	\node[anchor=center] at (4.5, -0.5) {\tiny 11};
	\node[anchor=center] at (5.5, -0.5) {\tiny 12};
	\node[anchor=center] at (6.5, -0.5) {\tiny 20};
	\node[anchor=center] at (7.5, -0.5) {\tiny 21};
	\node[anchor=center] at (8.5, -0.5) {\tiny 22};

	\highlightrectangle{1}{1}{3}{3}
	\highlightrectangle{4}{4}{6}{6}
	\highlightrectangle{7}{7}{9}{9}
\end{tikzpicture}
}
\caption{The three translates of $T_{0, I(2, 1/3)}$ that partition $B(0, 9)$. Informally, we are just shifting $T_{0, I(2, 1/3)} + (\frac{2}{9}, 0)$ and having the boundary of $B(0, 9)$ wrap around.}
\end{figure}
\end{center}
Lemma \ref{tiling} implies that we can tile $\F^{2}$
by translates of $T_{0, K}$ for each $K$. Let $\T(K)$ denote this tiling.
\begin{lemma}[Wavepacket decomposition]\label{wavepacketdecomposition}
Let $\delta \in q^{-\N}$ and fix $K \in P_{\delta}$.
Let $f$ be a Schwartz function with Fourier transform supported in $\ta_{K}$.
Then $|f|$ is constant on every $T \in \T(K)$
and $\wh{f1_{T}}$ is supported on $\ta_{K}$ for every $T \in \T(K)$.
Hence it is natural to write
\begin{align}\label{decomp}
f = \sum_{T \in \T(K)}f1_{T}
\end{align}
where each term $f1_{T}$ (which we will call a ``wavepacket") is Fourier supported
on $\ta_{K}$ and has constant modulus on every $T \in \T(K)$.
It also follows that if $\mc{T}$ is any subset of $\T(K)$, then
$\sum_{T \in \mc{T}}f1_{T}$ is Fourier supported in $\ta_K$.
\end{lemma}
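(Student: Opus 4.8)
The plan is to establish the three assertions of the lemma --- constancy of $|f|$ on tiles, the Fourier support of a single wavepacket, and the decomposition together with the subset claim --- in that order, each reducing to a short computation with the $q$-adic Fourier transform and the shears $M_a$. Throughout, fix $a \in K$, so that by \eqref{takma} we have $\ta_K = (a,a^2) + M_a\ta_\delta$, and recall that $T_{0,K}$ is by construction the polar set of $\ta_K - (a,a^2)$ and is an additive subgroup of $\F^2$ (being $M_a^{-T}T_\delta$ with $T_\delta = \delta^{-1}\O \times \delta^{-2}\O$ a subgroup).

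For the local constancy, I would take $y \in T_{0,K}$ and $\xi \in \ta_K$: then $|y\cdot(\xi - (a,a^2))| \le 1$ by the definition of $T_{0,K}$, so $\chi(y\cdot(\xi-(a,a^2))) = 1$ by the exact small-angle statement over $\F$, whence $\chi(y\cdot\xi) = \chi(y\cdot(a,a^2))$ is independent of $\xi$. Plugging this into Fourier inversion gives $f(x+y) = \int_{\ta_K}\wh{f}(\xi)\chi((x+y)\cdot\xi)\,d\xi = \chi(y\cdot(a,a^2))\int_{\ta_K}\wh{f}(\xi)\chi(x\cdot\xi)\,d\xi = \chi(y\cdot(a,a^2))f(x)$, so $|f(x+y)| = |f(x)|$. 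Since $T_{0,K}$ is a subgroup, $|f|$ is constant on every coset $x_0 + T_{0,K}$, i.e. on every $T \in \T(K)$.

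Next, for the Fourier support of a wavepacket, I would compute $\supp\wh{1_T}$ directly. Write $T = M_a^{-T}T_\delta + t$. By Proposition~\ref{ballft} (and its scaling consequence recorded just after it), $\wh{1_{T_\delta}}$ is a constant multiple of $1_{\ta_\delta}$; the change of variables under $M_a^T$, whose Jacobian is trivial because $\det M_a = 1$ and $|1| = 1$, together with the modulation coming from the translation by $t$, gives $\wh{1_T}(\xi) = c\,\chi(-t\cdot\xi)1_{\ta_\delta}(M_a^{-1}\xi)$ for a constant $c$, hence $\supp\wh{1_T} \subseteq M_a\ta_\delta$. Since $\wh{f1_T} = \wh{f}\ast\wh{1_T}$, its support lies in $\supp\wh{f} + \supp\wh{1_T} \subseteq \ta_K + M_a\ta_\delta = (a,a^2) + M_a\ta_\delta + M_a\ta_\delta = (a,a^2) + M_a(\ta_\delta + \ta_\delta) = (a,a^2) + M_a\ta_\delta = \ta_K$, where I used \eqref{takma} and the fact that $\ta_\delta = \delta\O \times \delta^2\O$ is closed under addition (Lemma~\ref{ballprop}(i) applied coordinatewise). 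Thus each $f1_T$ is Fourier supported on $\ta_K$.

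Finally, since $f$ is Schwartz it has compact support, so $f1_T = 0$ for all but finitely many $T \in \T(K)$; as $\T(K)$ partitions $\F^2$ by Lemma~\ref{tiling}, the identity \eqref{decomp} holds with a finite sum. For any $\mc{T}\subseteq\T(K)$, $\sum_{T\in\mc{T}}f1_T$ is then a finite sum of functions each Fourier supported on $\ta_K$ by the previous step, hence is itself Fourier supported on $\ta_K$. The only step with any real content is tracking the shear $M_a$ through $\wh{1_T}$ and checking $\ta_K + M_a\ta_\delta = \ta_K$; everything else is immediate in the $\F$ setting, precisely because the small-angle heuristic and the ``Fourier transform of a ball is a ball'' principle are exact identities rather than approximations.
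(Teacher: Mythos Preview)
Your proof is correct and follows essentially the same approach as the paper: Fourier inversion for local constancy, and the computation of $\wh{1_T}$ together with $\ta_K + M_a\ta_\delta = \ta_K$ for the support claim. Your local constancy step is phrased directly via the polar-set definition of $T_{0,K}$ rather than the paper's explicit parametrization $\xi = (s+a,(s+a)^2+t)$, which is an equivalent but slightly cleaner way to run the same computation.
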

\begin{rem}
In Lemma \ref{wavepacketdecomposition}, that $|f|$ is constant on every $T \in \mc{T}(K)$ is sometimes called the ``locally constant property."
\end{rem}
\begin{proof}
We will show that $|f(x)|$ is constant for all $x \in A + T_{0, K}$ for any
$A \in \F^2$. By Fourier inversion, we have that
\begin{align}
|f(x)| &= |\int_{\ta_K}\wh{f}(\xi)\chi(\xi \cdot x)\, d\xi|\nonumber\\
& = |\int_{|s| \leq \delta, |t| \leq \delta^2}\wh{f}(s + a, (s + a)^{2} + t)\chi((s + a)x_{1} + (s + a)^{2}x_{2})\chi(tx_2)\, ds\, dt|\nonumber\\
&= |\int_{|s| \leq \delta, |t| \leq \delta^2}\wh{f}(s + a, (s + a)^{2} + t)\chi(s(x_{1} + 2ax_2) + s^{2}x_2)\chi(tx_2)\, ds\, dt|.\label{fconst}
\end{align}
For $x \in A + T_{0, K}$, write $x = A + M_{a}^{-T}y$ for $y \in T_{\delta}$.
Then $M_{a}^{T}x = M_{a}^{T}A + y$ and hence
\begin{align*}
\chi(s(x_{1} + 2ax_2) + s^{2}x_2)= \chi((s, s^2) \cdot M_{a}^{T}A)\chi((s, s^2)\cdot y) = \chi((s, s^2) \cdot M_{a}^{T}A)
\end{align*}
since $sy_{1}, s^{2}y_{2} \in \O$. 
Next $\chi(tx_2) = \chi(tA_{2})\chi(ty_{2}) = \chi(tA_{2})$
since $ty_{2} \in \O$.
Therefore \eqref{fconst} is equal to
\begin{align*}
|\int_{|s| \leq \delta, |t| \leq \delta^2}\wh{f}(s + a, (s + a)^{2} + t)\chi((s, s^2) \cdot M_{a}^{T}A)\chi(tA_{2})\, ds\, dt|
\end{align*}
which is a constant independent of $x$ and only depends on $A$, $K$, and $f$.

Next to prove $\wh{f1_{T}}$ is supported on $\ta_K$, it suffices to observe
that $\wh{f1_{T}}  = \wh{f} \ast \wh{1_{T}}$. Observe that
$\wh{1_{T}}$ is a modulation of $\wh{1_{T_{0, K}}}$ and
\begin{align*}
\wh{1_{T_{0, K}}} = \int_{M_{a}^{-T}T_{\delta}}\chi(-x \cdot \xi)\, dx = \int_{T_{\delta}}\chi(-M_{a}^{-T}y \cdot \xi)\,dy = \delta^{3}1_{\ta_{\delta}}(M_{a}^{-1}\xi).
\end{align*} 
Thus $\wh{1_T}$ is supported on $M_{a}\ta_{\delta} = \ta_{K} - \ta_{K}$.

Finally, the decomposition \eqref{decomp} follows since translates of $T_{0, K}$
tiles $\F^2$. This completes the proof.
\end{proof}
\begin{cor}\label{constcor}
If $K \in P_{\delta}$ and $f$ is Fourier supported on $\ta_K$,
then $|f|$ is constant on any ball of radius $\delta$.
\end{cor}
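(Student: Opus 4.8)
The plan is to show that every ball of radius $\delta$ lies inside a single translate of the dual tube $T_{0, K}$, and then to quote the locally constant property established in the proof of Lemma \ref{wavepacketdecomposition}.

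First recall that, for any $a \in K$, we have $T_{0, K} = M_{a}^{-T}T_{\delta}$ with $T_{\delta} = \delta^{-1}\O \times \delta^{-2}\O$. Since $\delta \in q^{-\N}$ we have $\delta \leq 1 \leq \delta^{-1} \leq \delta^{-2}$, so the ball $B(0, \delta) = \delta\O \times \delta\O$ is contained in $T_{\delta}$. Moreover $M_{a}^{-T} = (\begin{smallmatrix} 1 & -2a \\ 0 & 1\end{smallmatrix})$ with $|-2a| = |a| \leq 1$ because $q$ is odd, so by the same ultrametric computation as in the proof of Lemma \ref{ballprop}$(ii)$ (applied at scale $\delta$ rather than $1$) we get $M_{a}^{-T}B(0, \delta) = B(0, \delta)$. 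Combining these two observations, $B(0, \delta) = M_{a}^{-T}B(0, \delta) \subseteq M_{a}^{-T}T_{\delta} = T_{0, K}$, and hence for every $x_0 \in \F^2$ we have $B(x_0, \delta) = x_0 + B(0, \delta) \subseteq x_0 + T_{0, K}$.

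Finally, the computation carried out in the proof of Lemma \ref{wavepacketdecomposition} shows that $|f|$ is constant on $A + T_{0, K}$ for \emph{every} $A \in \F^2$, not merely for those $A$ appearing in the tiling $\T(K)$. Taking $A = x_0$ shows that $|f|$ is constant on $x_0 + T_{0, K}$, hence on its subset $B(x_0, \delta)$; since $x_0 \in \F^2$ was arbitrary, this proves the corollary. I do not anticipate any real obstacle here: the only points requiring care are the nesting $B(0, \delta) \subseteq T_{\delta}$ and the shear invariance $M_{a}^{-T}B(0, \delta) = B(0, \delta)$, both of which are immediate consequences of the ultrametric triangle inequality.
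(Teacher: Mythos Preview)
Your proof is correct and follows essentially the same route as the paper's: both arguments observe that $B(x_0,\delta)\subset x_0+T_{0,K}$ and then invoke the fact from Lemma~\ref{wavepacketdecomposition} that $|f|$ is constant on every set of the form $A+T_{0,K}$. You simply supply more detail for the containment step (via $B(0,\delta)\subset T_\delta$ and the shear invariance $M_a^{-T}B(0,\delta)=B(0,\delta)$), whereas the paper states it in one line.
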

\begin{proof}
Observe that $B(a, \delta) \subset a + T_{0, K}$
and $|f|$ is constant on $a + T_{0, K}$ and hence also on $B(a, \delta)$.
\end{proof}

We now end with a lemma about the intersection of two non-parallel tubes (of course we only use this bound when they intersect).
\begin{lemma}\label{intersectionlemma}
Let $\delta \in q^{-\N}$ and $K, K' \in P_{\delta}$ be two distinct intervals. 
Consider a translate $T$
of $T_{0, K}$ and a translate $T'$ of $T_{0, K'}$.
Then $|T \cap T'| \leq \delta^{-2}/d(K, K')$.
\end{lemma}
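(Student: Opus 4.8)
The plan is to dispose of the trivial case, reduce to parallelograms based at a common point, and then run a one-line Fubini argument after extracting the key constraint on the $x_2$-coordinate. If $T$ and $T'$ are disjoint the bound holds vacuously, so I would fix a point $z \in T \cap T'$. The structural observation to exploit is that $T_{0, K} = M_{a}^{-T}T_{\delta}$ (and likewise $T_{0, K'} = M_{b}^{-T}T_{\delta}$, for $a \in K$, $b \in K'$) is a subgroup of $(\F^2, +)$: indeed $T_{\delta} = \delta^{-1}\O \times \delta^{-2}\O$ is closed under addition by the ultrametric inequality, and $M_{a}^{-T}$ is linear. Hence any translate of $T_{0,K}$ meeting $z$ equals $z + T_{0,K}$, so $T = z + T_{0,K}$ and $T' = z + T_{0,K'}$; translating everything by $-z$ reduces the problem to bounding $|T_{0,K} \cap T_{0,K'}|$.

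Next I would use the explicit description (from the discussion following \eqref{takma}) $T_{0, K} = \{x \in \F^2 : |x_1 + 2a x_2| \le \delta^{-1},\ |x_2| \le \delta^{-2}\}$, and the analogous one for $T_{0, K'}$ with some $b \in K'$. The two linear forms $x_1 + 2ax_2$ and $x_1 + 2bx_2$ differ by $2(a-b)x_2$, so subtracting the constraints $|x_1 + 2ax_2| \le \delta^{-1}$ and $|x_1 + 2bx_2| \le \delta^{-1}$, and using $|2| = 1$ (as $q$ is odd) together with the ultrametric inequality, forces $|a - b|\,|x_2| \le \delta^{-1}$. Since $K \ne K'$, Lemma \ref{lengthpart} gives $d(K, K') = |a - b|$ for any choice of $a \in K$, $b \in K'$, so on the intersection one has $|x_2| \le \delta^{-1}/d(K,K')$ — a power of $q$, and strictly smaller than $\delta^{-2}$, hence a genuine improvement over the a priori bound $|x_2| \le \delta^{-2}$.

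To finish, I would integrate over $x_2$: for each admissible $x_2$ the set of admissible $x_1$ is contained in a single interval of length $\delta^{-1}$, so by Fubini $|T_{0,K} \cap T_{0,K'}| \le \delta^{-1} \cdot |\{x_2 \in \F : |x_2| \le \delta^{-1}/d(K,K')\}| = \delta^{-1} \cdot \delta^{-1}/d(K,K') = \delta^{-2}/d(K,K')$, using that a ball of radius $q^m$ in $\F$ has measure $q^m$. I do not expect any real obstacle; the only points that need a little care are the first step — verifying that $T_{0,K}$ and $T_{0,K'}$ are subgroups so that translates through a common point coincide — and noting that $\delta^{-1}/d(K,K')$ is an integer power of $q$ so the measure computation is exact.
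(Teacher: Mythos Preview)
Your proof is correct and follows essentially the same route as the paper's: reduce to tubes through a common point, then subtract the two linear constraints $|x_1 + 2ax_2| \le \delta^{-1}$ and $|x_1 + 2bx_2| \le \delta^{-1}$ to extract the sharpened bound $|x_2| \le \delta^{-1}/d(K,K')$, and finish by Fubini. The only cosmetic difference is in the reduction step: the paper translates one tube to the origin and then applies the unimodular map $M_a^T$ to straighten it to $T_\delta$ (keeping a residual shift $r'$ in the other tube), whereas you use the cleaner observation that $T_{0,K}$ and $T_{0,K'}$ are additive subgroups, so both translates can be recentered at a common intersection point simultaneously, eliminating the stray shift.
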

\begin{proof}
Write $T = T_{0, K} + (r, s)$ and $T' = T_{0, K'} + (r', s')$.
Without loss of generality, we may assume that $(r, s) = (0, 0)$.
Applying $M_{a}^{T}$ we may assume that $T = T_{\delta}$.
Then for some $b \in K'$,
\begin{align*}
T \cap T' &\subset \{x \in \F^2: |x_1| \leq \delta^{-1}, |x_1 + 2bx_2 - r'| \leq \delta^{-1}\}\\
&\subset \{x \in \F^2: |x_1| \leq \delta^{-1}, |x_2 - \frac{r'}{2b}| \leq \frac{\delta^{-1}}{|b|}\}
\end{align*}
which has area $\leq \delta^{-2}/|b| = \delta^{-2}/d(K, K')$.
\end{proof}

\section{Basics of decoupling}\label{basics}
For ease of notation, we will now let $D_{p}(\delta) := D_{p, \F}(\delta)$.

\subsection{Sharpness of Theorem \ref{main}}
Before we start our proof of Theorem \ref{main}, we first prove that up to a $\delta^{-\vep}$ factor, the estimate is sharp. 
We first start with a computation.
\begin{lemma}\label{takfi}
We have
\begin{align*}
\widecheck{1}_{\ta_K}(x) = \chi(ax_{1} + a^{2}x_{2})\delta^{3}1_{T_{0, K}}
\end{align*}
for any $a \in K$.
\end{lemma}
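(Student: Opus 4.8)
The plan is to compute $\widecheck{1}_{\ta_K}$ directly from the definition of the inverse Fourier transform, using the parametrization of $\ta_K$ from \eqref{takma} together with the fact that integrating a nontrivial character over a ball gives zero (the argument already used in the proof of Proposition \ref{ballft}).

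First I would fix $a \in K$ and write a point of $\ta_K$ as $(s + a, (s+a)^2 + t)$ with $|s| \leq \delta$ and $|t| \leq \delta^2$, exactly as in the proof of Lemma \ref{wavepacketdecomposition}; this is valid by Lemma \ref{curvestraight}. Then
\begin{align*}
\widecheck{1}_{\ta_K}(x) = \int_{\ta_K}\chi(\xi \cdot x)\, d\xi = \int_{|s| \leq \delta,\, |t| \leq \delta^2}\chi((s+a)x_1 + ((s+a)^2 + t)x_2)\, ds\, dt.
\end{align*}
Pulling out the factor $\chi(ax_1 + a^2 x_2)$ and expanding, the remaining integrand is $\chi(s(x_1 + 2ax_2) + s^2 x_2)\chi(tx_2)$, which factors as a product of an integral in $s$ over $I(0,\delta)$ and an integral in $t$ over $I(0,\delta^2)$.

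Next I would evaluate each factor. The $t$-integral $\int_{|t| \leq \delta^2}\chi(tx_2)\, dt$ equals $\delta^2$ if $|x_2| \leq \delta^{-2}$ and $0$ otherwise, by the same computation as in Proposition \ref{ballft} (rescaled). For the $s$-integral, on the region $|x_2| \leq \delta^{-2}$ we have $|s^2 x_2| \leq \delta^2 \cdot \delta^{-2} = 1$, so $\chi(s^2 x_2) = 1$ and the integral reduces to $\int_{|s| \leq \delta}\chi(s(x_1 + 2ax_2))\, ds$, which equals $\delta$ if $|x_1 + 2ax_2| \leq \delta^{-1}$ and $0$ otherwise. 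Multiplying the three surviving factors $\delta \cdot \delta^2 = \delta^3$ and recalling that $T_{0,K} = \{x : |x_1 + 2ax_2| \leq \delta^{-1},\ |x_2| \leq \delta^{-2}\}$ gives $\widecheck{1}_{\ta_K}(x) = \chi(ax_1 + a^2 x_2)\delta^3 1_{T_{0,K}}(x)$, as claimed. I do not expect a serious obstacle here; the one point requiring a little care is the interaction between the two integrals — the vanishing of $s^2 x_2$ modulo $\O$ relies on already being inside the $t$-support constraint $|x_2| \leq \delta^{-2}$, so the factorization of the support conditions must be organized in the right order (handle $x_2$ first, then $x_1$).
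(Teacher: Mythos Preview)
Your proposal is correct and follows essentially the same approach as the paper: parametrize $\ta_K$ by $(s+a,(s+a)^2+t)$, pull out $\chi(ax_1+a^2x_2)$, and evaluate the $t$- and $s$-integrals via Proposition~\ref{ballft}. In fact you are slightly more careful than the paper, which simply asserts that the $s$-integral equals $\delta\,1_{|x_1+2ax_2|\le\delta^{-1}}$; you correctly note that the quadratic phase $\chi(s^2x_2)$ only trivializes once the $t$-integral has forced $|x_2|\le\delta^{-2}$, and that outside this region the vanishing of the $t$-integral makes the value of the $s$-integral irrelevant.
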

\begin{rem}
The right hand side is well-defined since for $x \in T_{0, K}$,
$$\chi((a - b)x_{1} + (a^{2} - b^{2})x_{2}) = \chi((a - b)(x_{1} + 2ax_2) - (a - b)^{2}x_2) = 1$$
if $a, b \in K$ (and so $|a - b| \leq \delta$).
\end{rem}
\begin{proof}
We compute
\begin{align*}
\widecheck{1}_{\ta_K} &= \int_{\ta_K}\chi(x \cdot \xi)\, d\xi = \int_{|s| \leq \delta, |t| \leq \delta^2}e((s + a)x_{1} + (s + a)^{2}x_{2})e(tx_2)\, ds\, dt\\
&= \chi(ax_{1} + a^{2}x_2)\int_{|t| \leq \delta^{2}}\chi(tx_2)\, dt\int_{|s| \leq \delta}\chi(s(x_1 + 2ax_2) + s^{2}x_2)\, ds.
\end{align*}
By Proposition \ref{ballft}, the 
integral in $t$ is equal to $\delta^{2}1_{|x_2| \leq \delta^{-2}}$
and the integral in $s$ is equal to $\delta 1_{|x_{1} + 2ax_2| \leq \delta^{-1}}$.
This completes the proof of the lemma.
\end{proof}
This then allows us to show the following result.
\begin{prop}\label{lowerbound}
We have $D_{p}(\delta) \gtrsim 1 + \delta^{-(\frac{1}{2} - \frac{3}{p})}$.
\end{prop}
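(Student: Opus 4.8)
The plan is to prove the lower bound $D_p(\delta) \gtrsim 1 + \delta^{-(1/2 - 3/p)}$ by exhibiting two families of test functions, one witnessing each of the two terms on the right-hand side. For the constant term $D_p(\delta) \gtrsim 1$, it suffices to take $f$ Fourier supported on a single $\ta_K$: then the sum on the right-hand side of the definition of $D_p(\delta)$ has only one term, and the inequality $\nms{f}_{L^p} \leq D_p(\delta) \nms{f_K}_{L^p}$ forces $D_p(\delta) \geq 1$. The main content is the second term, for which the natural candidate is the ``flat'' example $f := \sum_{K \in P_\delta} \wc{1}_{\ta_K}$, i.e. the function with $\wh{f} = 1_{\bigcup_K \ta_K}$, which is exactly the extremizer morally responsible for sharpness in the real setting.

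To carry this out I would first use Lemma \ref{takfi} to write each piece explicitly: $\wc{1}_{\ta_K}(x) = \chi(a_K x_1 + a_K^2 x_2)\delta^3 1_{T_{0,K}}(x)$ where $a_K$ is (say) the left endpoint of $K$. The key geometric observation is that every $T_{0,K}$ contains $B(0,\delta^{-1})$, so on the small ball $B(0,\delta^{-1})$ all $\delta^{-1}$ characters are identically $1$ (by the small angle approximation, since $|a_K x_1 + a_K^2 x_2| \leq 1$ there) and hence $|f(x)| = \delta^{-1} \cdot \delta^3 = \delta^2$ on $B(0,\delta^{-1})$, giving $\nms{f}_{L^p(\F^2)} \geq \delta^2 |B(0,\delta^{-1})|^{1/p} = \delta^{2 - 2/p}$. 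For the right-hand side, each $\nms{(f)_K}_{L^p} = \nms{\wc{1}_{\ta_K}}_{L^p} = \delta^3 |T_{0,K}|^{1/p} = \delta^3 \cdot (\delta^{-3})^{1/p} = \delta^{3 - 3/p}$ (since $|T_{0,K}| = \delta^{-1} \cdot \delta^{-2} = \delta^{-3}$), so $(\sum_{K \in P_\delta} \nms{(f)_K}_{L^p}^2)^{1/2} = (\delta^{-1})^{1/2} \delta^{3 - 3/p} = \delta^{5/2 - 3/p}$. Dividing, $D_p(\delta) \geq \delta^{2 - 2/p}/\delta^{5/2 - 3/p} = \delta^{-(1/2 - 3/p)}$, as desired.

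I expect the main (minor) obstacle to be purely bookkeeping: being careful that $(f)_K$ — the Fourier restriction of $f$ to $K \times \F$ — really equals $\wc{1}_{\ta_K}$, which holds because the $\ta_K$ are disjoint and each $\ta_K \subset K \times \F$ (by Corollary \ref{intintersect} and the definition of $\ta_K$), and double-checking the volume normalizations ($|B(0,r)| = r^2$, $|T_{0,K}| = \delta^{-3}$ by the wavepacket discussion). There is no delicate analysis here — unlike over $\R$, where one would need a Schwartz tail argument or a weight to handle the fact that $\wc{1}_{\ta_K}$ is a sinc-type function rather than compactly supported, over $\F$ the function $\wc{1}_{\ta_K}$ is genuinely supported on the tube $T_{0,K}$ and is genuinely constant on the sub-ball, so every ``essentially'' in the heuristic becomes an equality. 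One should then combine the two lower bounds: since $\max(1, \delta^{-(1/2-3/p)}) \sim 1 + \delta^{-(1/2-3/p)}$, taking the better of the two examples gives the claimed bound.
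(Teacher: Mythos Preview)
Your approach is exactly the one the paper takes: a single-cap example for the bound $D_p(\delta)\geq 1$, and the sum $f=\sum_{K\in P_\delta}\wc{1}_{\ta_K}$ (up to an irrelevant scalar) for the bound $D_p(\delta)\geq \delta^{-(1/2-3/p)}$. The computation of $\nms{f_K}_{L^p}$ and of the $\ell^2$ sum is correct. However, the lower bound on $\nms{f}_{L^p}$ contains a genuine error.

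You claim that on $B(0,\delta^{-1})$ one has $|a_K x_1+a_K^2 x_2|\leq 1$, so that all the characters equal $1$ there. This is false: since $|a_K|\leq 1$ and $|x_i|\leq \delta^{-1}$, the ultrametric inequality only gives $|a_K x_1+a_K^2 x_2|\leq \delta^{-1}$, which is $>1$ for $\delta<1$. Concretely, for $K\ni 1$ take $a_K=1$ and $x=(q^{-1},0)\in B(0,\delta^{-1})$; then $\chi(a_Kx_1+a_K^2x_2)=\chi(q^{-1})\neq 1$. So $|f|$ is \emph{not} the constant $\delta^{2}$ on $B(0,\delta^{-1})$, and the inequality $\nms{f}_{L^p}\geq \delta^{2-2/p}$ is unjustified. (Indeed, $f$ is Fourier supported in $\O^2=B(0,1)$, so local constancy only gives constancy of $|f|$ on balls of radius $1$, not $\delta^{-1}$.) There is also an arithmetic slip in the final step: $\delta^{2-2/p}/\delta^{5/2-3/p}=\delta^{-(1/2-1/p)}$, not $\delta^{-(1/2-3/p)}$; the two errors happen to cancel, which is why your conclusion looks right.

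The fix is immediate and is precisely what the paper does: restrict to $\O^2=B(0,1)$ rather than $B(0,\delta^{-1})$. On $\O^2$ one genuinely has $a_Kx_1+a_K^2x_2\in\O$, hence $\chi(\cdots)=1$, and $\O^2\subset T_{0,K}$ for every $K$; thus $|f|=\delta^{2}$ on $\O^2$ and $\nms{f}_{L^p(\F^2)}\geq \delta^{2}\cdot|\O^2|^{1/p}=\delta^{2}$. Dividing by $\delta^{5/2-3/p}$ now gives $\delta^{-(1/2-3/p)}$ honestly.
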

\begin{proof}
To see that $D_{p}(\delta) \geq 1$, take $f = \widecheck{1}_{\ta_{\delta}}$.
Then $$\nms{f}_{L^{p}(\F^2)} = \nms{\widecheck{1}_{\ta_{\delta}}}_{L^{p}(\F^2)} = (\sum_{K \in P_{\delta}}\nms{f_{K}}_{L^{p}(\F^2)}^{2})^{1/2}$$
and hence
$$D_{p}(\delta) \geq \frac{\nms{f}_{L^{p}(\F^2)}}{(\sum_{K \in P_{\delta}}\nms{f_K}_{L^{p}(\F^2)}^{2})^{1/2}} = 1.$$

Next, to see that $D_{p}(\delta) \geq \delta^{-(\frac{1}{2} - \frac{3}{p})}$, take
$f = \sum_{K \in P_{\delta}}f_{K}$ where $f_K$ is such that $\wh{f_K} = \delta^{-3}1_{\ta_{K}}$. Therefore $f_{K} = \chi(a_{K}x_1 + a_{K}^{2}x_2)1_{T_{0, K}}$ 
for any $a_K \in K$. This implies that
$\nms{f_{K}}_{L^{p}(\F^2)} = \delta^{-3/p}$ and so
$$(\sum_{K \in P_{\delta}}\nms{f_K}_{L^{p}(\F^2)}^{2})^{1/2} = \delta^{-1/2 - 3/p}.$$
We also have
\begin{align*}
f = \sum_{K \in P_{\delta}}f_K = \sum_{K \in P_{\delta}}\chi(a_{K}x_{1} + a_{K}^{2}x_2)1_{T_{0, K}}.
\end{align*}
For $x \in \O^{2}$, since $a_{K} \in K \subset \O$, $a_{K}x_1 + a_{K}^{2}x_2 \in \O$
and hence $\chi(a_{K}x_1 + a_{K}^{2}x_2) = 1$ for $x \in \O^2$.
Furthermore, $\O^{2} \subset T_{0, K}$ for every $K \in P_{\delta}$.
Thus
\begin{align*}
\nms{f}_{L^{p}(\F^2)} \geq \nms{f}_{L^{p}(\O^2)} \geq \delta^{-1}
\end{align*}
and hence
\begin{align*}
D_{p}(\delta) \geq \frac{\nms{f}_{L^{p}(\F^2)}}{(\sum_{K \in P_{\delta}}\nms{f_K}_{L^{p}(\F^2)}^{2})^{1/2}} \geq \delta^{-1-(-\frac{1}{2} - \frac{3}{p})} = \delta^{-(\frac{1}{2} - \frac{3}{p})}.
\end{align*}
\end{proof}

\subsection{Interpolation of decoupling estimates}
Now we reduce Theorem \ref{main} to just showing the case when $p = 6$. To do this, we need to be able to interpolate decoupling estimates.
First we need that Fourier restriction to $\ta_K$ is a bounded operator on $L^p$.
\begin{lemma}\label{ftakbdd}
Let $K \in P_{\delta}$ and $f$ be Fourier supported in $\bigcup_{K \in P_{\delta}}\ta_K$.
Then $$\nms{f_{K}}_{L^{p}(\F^2)} \leq \nms{f}_{L^{p}(\F^2)}$$
for all $1 \leq p \leq \infty$.
\end{lemma}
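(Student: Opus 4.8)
The plan is to realize $f_K$ as a convolution of $f$ with an honest $L^1$ function of $L^1$-norm exactly $1$, and then quote Young's convolution inequality. First I would observe that the hypothesis $\supp\wh{f} \subset \bigcup_{K' \in P_\delta}\ta_{K'}$ lets us replace the Fourier multiplier $1_{K \times \F}$ by $1_{\ta_K}$: since each $\ta_{K'}$ lies over the interval $K'$ and the intervals in $P_\delta$ are pairwise disjoint (Lemma \ref{lengthpart}), we have $(K \times \F) \cap \bigcup_{K' \in P_\delta}\ta_{K'} = \ta_K$, so $\wh{f}\,1_{K\times\F} = \wh{f}\,1_{\ta_K}$ and hence $\wh{f_K} = \wh{f}\,1_{\ta_K}$.

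Next I would write $1_{\ta_K} = \wh{g}$ with $g := \wc{1}_{\ta_K}$, which by Lemma \ref{takfi} equals $g(x) = \chi(ax_1 + a^2 x_2)\,\delta^3 1_{T_{0, K}}(x)$ for any $a \in K$; in particular $g$ is a bounded function supported on the finite-measure set $T_{0,K}$, so $g \in L^1(\F^2)$. By the convolution theorem, $\wh{f_K} = \wh{f}\,\wh{g} = \wh{f \ast g}$, so $f_K = f \ast g$. Since $T_{0, K} = M_a^{-T}T_\delta$ with $M_a^{-T}$ of determinant $1$, we have $|T_{0,K}| = |T_\delta| = \delta^{-1}\cdot\delta^{-2} = \delta^{-3}$, and therefore
\begin{align*}
\nms{g}_{L^1(\F^2)} = \delta^3\,|T_{0,K}| = 1.
\end{align*}

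Finally, Young's convolution inequality on the locally compact abelian group $\F^2$ (with its Haar measure) gives, for every $1 \leq p \leq \infty$,
\begin{align*}
\nms{f_K}_{L^p(\F^2)} = \nms{f \ast g}_{L^p(\F^2)} \leq \nms{f}_{L^p(\F^2)}\nms{g}_{L^1(\F^2)} = \nms{f}_{L^p(\F^2)},
\end{align*}
which is the claim. I do not anticipate a genuine obstacle here: the only points requiring care are the identification $\wh{f}\,1_{K\times\F} = \wh{f}\,1_{\ta_K}$ on the support of $\wh{f}$ (handled by disjointness of the $\ta_{K'}$) and the computation $\nms{g}_{L^1} = 1$ (handled by Lemma \ref{takfi} together with $|T_{0,K}| = \delta^{-3}$); Young's inequality over $\F^2$ is standard and may be assumed.
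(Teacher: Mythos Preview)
Your proof is correct and follows essentially the same approach as the paper: write $f_K = f \ast \wc{1}_{\ta_K}$, apply Young's inequality, and use Lemma \ref{takfi} to see that $\nms{\wc{1}_{\ta_K}}_{L^1(\F^2)} = 1$. You add a bit more detail than the paper (explicitly justifying why the multiplier $1_{K\times\F}$ may be replaced by $1_{\ta_K}$ on $\supp\wh{f}$, and computing $|T_{0,K}| = \delta^{-3}$), but the argument is the same.
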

\begin{proof}
By definition, $f_{K} = f \ast \widecheck{1}_{\ta_K}$ and hence Young's inequality
gives that
\begin{align*}
\nms{f_K}_{L^{p}(\F^2)} = \nms{f \ast \widecheck{1}_{\ta_K}}_{L^{p}(\F^2)} \leq \nms{f}_{L^{p}(\F^2)}\nms{\widecheck{1}_{\ta_K}}_{L^{1}(\F^2)}.
\end{align*}
Applying Lemma \ref{takfi} shows that $\nms{\widecheck{1}_{\ta_K}}_{L^{1}(\F^2)} = 1$ 
which completes the proof of the lemma.
\end{proof}

Now we are ready to show that one can interpolate decoupling estimates.
\begin{prop}\label{interpolate}
If $\frac{1}{p} = \frac{\ta}{p_0} + \frac{1-\ta}{p_1}$, then
$$D_{p}(\delta) \leq D_{p_0}(\delta)^{\ta}D_{p_1}(\delta)^{1 - \ta}.$$
\end{prop}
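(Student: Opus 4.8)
The plan is to prove the interpolation inequality
$$D_p(\delta) \leq D_{p_0}(\delta)^{\ta} D_{p_1}(\delta)^{1-\ta}$$
by bounding, for an arbitrary $f$ Fourier supported in $\bigcup_{K \in P_\delta} \ta_K$, the quantity $\nms{f}_{L^p(\F^2)}$ in terms of $(\sum_K \nms{f_K}_{L^p}^2)^{1/2}$ with the claimed constant. The natural tool is complex interpolation, but in this expository setting I would prefer the more elementary route that avoids analytic families of operators: first use the (real-variable) interpolation inequality $\nms{f}_{L^p} \leq \nms{f}_{L^{p_0}}^{\ta} \nms{f}_{L^{p_1}}^{1-\ta}$ (H\"older/log-convexity of $L^p$ norms, valid on any measure space, here $\F^2$ with Haar measure), then apply the definition of $D_{p_0}(\delta)$ and $D_{p_1}(\delta)$ to each factor, and finally reconcile the two different $\ell^2$ sums $(\sum_K \nms{f_K}_{L^{p_0}}^2)^{1/2}$ and $(\sum_K \nms{f_K}_{L^{p_1}}^2)^{1/2}$ into a single $(\sum_K \nms{f_K}_{L^p}^2)^{1/2}$.

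First I would write, by log-convexity of $L^p$ norms,
$$\nms{f}_{L^p(\F^2)} \leq \nms{f}_{L^{p_0}(\F^2)}^{\ta}\,\nms{f}_{L^{p_1}(\F^2)}^{1-\ta}.$$
Since $f$ is Fourier supported in $\bigcup_K \ta_K$, the definition of $D_{p_i}(\delta)$ applies and gives
$$\nms{f}_{L^{p_0}(\F^2)} \leq D_{p_0}(\delta)\Big(\sum_{K \in P_\delta}\nms{f_K}_{L^{p_0}(\F^2)}^2\Big)^{1/2}, \qquad \nms{f}_{L^{p_1}(\F^2)} \leq D_{p_1}(\delta)\Big(\sum_{K \in P_\delta}\nms{f_K}_{L^{p_1}(\F^2)}^2\Big)^{1/2}.$$
Raising these to the powers $\ta$ and $1-\ta$ and multiplying yields the factor $D_{p_0}(\delta)^{\ta}D_{p_1}(\delta)^{1-\ta}$ in front of
$$\Big(\sum_{K}\nms{f_K}_{L^{p_0}}^2\Big)^{\ta/2}\Big(\sum_{K}\nms{f_K}_{L^{p_1}}^2\Big)^{(1-\ta)/2}.$$

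The main obstacle is the last step: I need this mixed product of $\ell^2$ sums to be controlled by $(\sum_K \nms{f_K}_{L^p}^2)^{1/2}$, i.e. I need something like $\nms{\{\nms{f_K}_{L^p}\}_K}_{\ell^2} \geq \nms{\{\nms{f_K}_{L^{p_0}}\}_K}_{\ell^2}^{\ta}\nms{\{\nms{f_K}_{L^{p_1}}\}_K}_{\ell^2}^{1-\ta}$. Pointwise in $K$ we do have $\nms{f_K}_{L^p} \leq \nms{f_K}_{L^{p_0}}^{\ta}\nms{f_K}_{L^{p_1}}^{1-\ta}$, but that gives the inequality in the wrong direction for the $\ell^2$ sums. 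The correct tool is H\"older's inequality for sequences: with exponents $1/\ta$ and $1/(1-\ta)$ (conjugate since $\ta + (1-\ta)=1$), applied to the sequences $\{\nms{f_K}_{L^{p_0}}^{2\ta}\}_K$ and $\{\nms{f_K}_{L^{p_1}}^{2(1-\ta)}\}_K$,
$$\sum_{K}\nms{f_K}_{L^{p_0}}^{2\ta}\nms{f_K}_{L^{p_1}}^{2(1-\ta)} \leq \Big(\sum_K \nms{f_K}_{L^{p_0}}^2\Big)^{\ta}\Big(\sum_K \nms{f_K}_{L^{p_1}}^2\Big)^{1-\ta},$$
which is exactly the mixed product above. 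Combining this with the pointwise bound $\nms{f_K}_{L^p}^2 \leq \nms{f_K}_{L^{p_0}}^{2\ta}\nms{f_K}_{L^{p_1}}^{2(1-\ta)}$ shows $\sum_K \nms{f_K}_{L^p}^2$ is at most the mixed product, which is the needed direction. Note that Lemma \ref{ftakbdd} is not actually required for this argument, though it would be invoked if one instead ran complex interpolation on the vector-valued operator; I would mention this but keep the elementary proof. Taking $f$ arbitrary and dividing through gives $D_p(\delta) \leq D_{p_0}(\delta)^{\ta}D_{p_1}(\delta)^{1-\ta}$, completing the proof.
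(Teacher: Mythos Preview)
Your argument contains a genuine gap in the final step, and it cannot be repaired: the inequality you need simply goes the wrong way.

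You correctly identify that after applying log-convexity and the definitions of $D_{p_0}(\delta)$, $D_{p_1}(\delta)$, you are left needing
\[
\Big(\sum_{K}\nms{f_K}_{L^{p_0}}^2\Big)^{\ta/2}\Big(\sum_{K}\nms{f_K}_{L^{p_1}}^2\Big)^{(1-\ta)/2} \;\leq\; \Big(\sum_{K}\nms{f_K}_{L^{p}}^2\Big)^{1/2}.
\]
You then combine H\"older in $K$ with the pointwise bound $\nms{f_K}_{L^p}^2 \leq \nms{f_K}_{L^{p_0}}^{2\ta}\nms{f_K}_{L^{p_1}}^{2(1-\ta)}$ and claim this yields ``the needed direction.'' But what you actually prove is
\[
\sum_{K}\nms{f_K}_{L^{p}}^2 \;\leq\; \Big(\sum_{K}\nms{f_K}_{L^{p_0}}^2\Big)^{\ta}\Big(\sum_{K}\nms{f_K}_{L^{p_1}}^2\Big)^{1-\ta},
\]
which is the \emph{reverse} of what you need. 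Indeed, the inequality you need is false in general: if $P_{\delta}$ consisted of a single interval $K$, it would read $\nms{f_K}_{L^{p_0}}^{\ta}\nms{f_K}_{L^{p_1}}^{1-\ta} \leq \nms{f_K}_{L^{p}}$, the reverse of log-convexity, which fails for generic $f_K$.

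This obstruction is exactly why the paper does \emph{not} start from log-convexity applied to $\nms{f}_{L^p}$. Instead it packages the decoupling inequality as the boundedness of a single linear operator $T:\ell^{2}(L^{r}(\F^2)) \to L^{r}(\F^2)$, $T[(f_j)] = \sum_j P_{\ta_{I_j}} f_j$, with operator norm at most $D_r(\delta)$ for $r = p_0, p_1$ (this step uses Lemma~\ref{ftakbdd}), and then invokes Riesz--Thorin. Complex interpolation of the domain spaces $\ell^{2}(L^{p_0})$ and $\ell^{2}(L^{p_1})$ gives $\ell^{2}(L^{p})$ directly, so the problematic comparison of mixed $\ell^2$ sums never arises.
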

\begin{proof}
Write $P_{\delta} := \{I_1, \ldots, I_N\}$ for some $N = \delta^{-1} \in q^{\N}$.
Define the operator $$T[(f_{j})_{j = 1}^{N}] := \sum_{j = 1}^{N}P_{\ta_{I_j}}f_j$$
where $P_{\ta_{I_j}}f := f \ast \widecheck{1}_{\ta_{I_j}}$ is the Fourier projection of $f$ to $\ta_{I_j}$.
Then for $r = p_0, p_1$, we have
\begin{align*}
\nms{T[(f_j)]}_{L^{r}(\F^2)} &\leq D_{r}(\delta)(\sum_{k = 1}^{N}\nms{(\sum_{j = 1}^{N}P_{\ta_{I_j}}f_j)_{I_k}}_{L^{r}(\F^2)}^{2})^{1/2} = D_{r}(\delta)(\sum_{k = 1}^{N}\nms{P_{\ta_{I_k}}f_k}_{L^{r}(\F^2)}^{2})^{1/2}\\
& \leq D_{r}(\delta)(\sum_{k = 1}^{N}\nms{f_k}_{L^{r}(\F^2)}^{2})^{1/2} = D_{r}(\delta)\nms{(f_j)}_{\ell^{2}L^{r}(\F^2)}
\end{align*}
where in the second inequality we have used Lemma \ref{ftakbdd}.
Therefore the operator $T$ sends $\ell^{2}(L^{p_0}(\F^2))$ into $L^{p_0}(\F^2)$ and also $\ell^{2}(L^{p_1}(\F^2))$ into $L^{p_1}(\F^2)$.
Thus by Riesz-Thorin interpolation, $T: \ell^{2}(L^{p}(\F^2)) \rightarrow L^{p}(\F^2)$ for all $p \in [p_0, p_1]$
and the operator norm of $T$, that is, $D_{p}(\delta)$, is $\leq D_{p_0}(\delta)^{\ta}D_{p_1}(\delta)^{1 - \ta}$.
\end{proof}

Therefore if we could show that $D_{6}(\delta) \lsm_{\vep} \delta^{-\vep}$, then interpolating 
the estimate that $D_{2}(\delta) = 1$, which follows from Plancherel,
and $D_{\infty}(\delta) \leq \delta^{-1/2}$ which follows from Cauchy-Schwarz proves
Theorem \ref{main}. Thus it remains to just show the $l^{2}L^{6}$ decoupling estimate.

\subsection{Parabola decoupling over $\F$ implies quadratic VMVT}
Before we begin our proof of $l^{2}L^{6}$ parabola decoupling, we show that such a decoupling theorem over $\F$ is strong enough to imply
exponential sum estimates.
\begin{lemma}
Suppose for some odd prime $q$, we knew that
\begin{align}\label{l6dec}
\nms{f}_{L^{6}(\F^2)} \lsm_{q, \vep} \delta^{-\vep}(\sum_{J \in P_{\delta}}\nms{f_{J}}_{L^{6}(\F^2)}^{2})^{1/2}
\end{align}
for all $f$ Fourier supported in $\bigcup_{K \in P_{\delta}}\ta_K$.
Then $K(N) \lsm_{q, \vep} N^{\vep}$.
\end{lemma}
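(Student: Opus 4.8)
The plan is to realize the exponential sum $\sum_{n=1}^N a_n e(nx + n^2 t)$ as (the restriction of) a function Fourier supported near the parabola over $\F$ at scale $\delta = q^{-k}$ where $q^{k} \sim N$, so that the $L^6$ decoupling inequality \eqref{l6dec} becomes, after evaluating both sides, exactly the inequality defining $K(N)$ up to the $\delta^{-\vep} = N^{\vep}$ loss. First I would choose $k \in \N$ with $q^{k-1} < N \leq q^k$, set $\delta = q^{-k}$, and use the embedding $\{1,\dots,N\} \hookrightarrow \O$ via base-$q$ expansions; by Lemma \ref{lengthpart} the integers $1,\dots,N$ land in distinct intervals $K \in P_\delta$, with exactly one integer per such interval. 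The natural candidate is
\begin{align*}
f(x) := \sum_{n=1}^{N} a_n \chi(nx_1 + n^2 x_2)\, 1_{\O^2}(x),
\end{align*}
or rather, to make the Fourier support land in $\bigcup_K \ta_K$, one multiplies instead by an indicator of a dual tube: take $f = \sum_n a_n f_{K_n}$ where $\wh{f_{K_n}} = \delta^{-3} 1_{\ta_{K_n}}$, so by Lemma \ref{takfi} each $f_{K_n}(x) = \chi(n x_1 + n^2 x_2) 1_{T_{0,K_n}}(x)$ (using $n \in K_n$), exactly as in the lower bound computation of Proposition \ref{lowerbound}.

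Next I would evaluate the two sides of \eqref{l6dec} for this $f$. For the right-hand side: each $f_{K_n}$ has $\nms{f_{K_n}}_{L^6(\F^2)} = |a_n| \delta^{-1/2}$ since $|f_{K_n}| = |a_n| 1_{T_{0,K_n}}$ and $|T_{0,K_n}| = \delta^{-1}\cdot \delta^{-2}\cdot \delta = \delta^{-2}$... wait, $|T_{0,K_n}|$ is the union of $\delta^{-1}$ squares of side $\delta^{-1}$, hence volume $\delta^{-3}$, so $\nms{f_{K_n}}_{L^6} = |a_n| \delta^{-1/2}$. Thus the right side is $\delta^{-\vep} \delta^{-1/2}(\sum_n |a_n|^2)^{1/2}$. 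For the left-hand side: on $\O^2$, which is contained in every $T_{0,K_n}$ and on which every $\chi(nx_1+n^2x_2) = 1$ (as $n, n^2 \in \O$), $f$ does not directly look like the torus sum, so the key maneuver is a change of variables relating $L^6(\F^2)$-behavior of $f$ on a suitable fundamental domain to $L^6_{x,t}(\T^2)$ of the exponential sum. Concretely, I would integrate over the region where $|x_1| \leq \delta^{-1}$, $|x_2| \leq \delta^{-2}$ and use that $\chi$ restricted to $q^{-k}\O / \O \cong \Z/q^k\Z$ plays the role of additive characters mod $q^k \sim N$: after rescaling $x_1 \mapsto \delta^{-1} x_1$, $x_2 \mapsto \delta^{-2} x_2$ the sum $\sum_n a_n \chi(\cdot)$ becomes a genuine exponential sum in residues mod $q^k$, and $L^6$ over the rescaled $\O^2$ equals the $\ell^6$/integral average that computes the sixth moment. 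Matching powers of $\delta$ on both sides then yields $K(q^k) \lsm_{q,\vep} (q^k)^\vep \lsm_{q,\vep} N^\vep$, and since $K$ is monotone (or by padding $a_{N+1} = \dots = a_{q^k} = 0$), $K(N) \lsm_{q,\vep} N^\vep$.

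The main obstacle I anticipate is the bookkeeping in this last change of variables: one must argue carefully that evaluating $\nms{f}_{L^6(\F^2)}$ on the correct piece of $\F^2$ (and checking there is no contribution gained or lost by enlarging to all of $\F^2$, which is where $\nms{f}_{L^6(\F^2)} \geq \nms{f}_{L^6(\text{fundamental domain})}$ suffices for the lower bound we need) reproduces exactly $\int_{\T^2} |\sum_n a_n e(nx + n^2 t)|^6\, dx\, dt$ up to the correct normalization constant $\delta^{\text{something}}$, with no stray factors. The potential pitfall is that one only gets a one-sided comparison, but since \eqref{l6dec} is an upper bound for $\nms{f}_{L^6(\F^2)}$ and we want an upper bound for the torus sum, the inequality $\nms{f}_{L^6(\F^2)} \geq (\text{normalization}) \cdot \nms{\sum_n a_n e(\cdot)}_{L^6(\T^2)}$ is exactly what is needed, so this should go through cleanly. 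A secondary routine check is that $f$ is genuinely Fourier supported in $\bigcup_{K \in P_\delta} \ta_K$ (with $\wh{f} = \delta^{-3}\sum_n 1_{\ta_{K_n}}$ supported on the disjoint union of the $\ta_{K_n} \subset \ta_K$), which is immediate from Lemma \ref{takfi} and disjointness of the $\ta_K$.
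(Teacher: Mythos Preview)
Your overall strategy---build a test function over $\F$ whose $L^6$ norm recovers the torus sixth moment, then apply \eqref{l6dec}---is right, and the reduction to $N=q^k$ by monotonicity is fine. But the specific test function you chose, $f=\sum_n a_n f_{K_n}$ with $\wh{f_{K_n}}=\delta^{-3}1_{\ta_{K_n}}$, does not do the job, and the gap is more than bookkeeping.

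The problem is that each $f_{K_n}$ is spatially supported on the tube $T_{0,K_n}=\{|x_1+2nx_2|\leq \delta^{-1},\ |x_2|\leq\delta^{-2}\}$, and these tubes point in \emph{different} directions. The region $\{|x_1|\leq\delta^{-1},\ |x_2|\leq\delta^{-2}\}=T_\delta$ you propose to integrate over is \emph{not} contained in $T_{0,K_n}$ once $|n|=1$ (since then $|2nx_2|$ can be as large as $\delta^{-2}$), so on $T_\delta$ your $f$ is not the clean sum $\sum_n a_n\chi(nx_1+n^2x_2)$ and the change of variables you sketch does not go through. If instead you restrict to the common intersection $B(0,\delta^{-1})$, then $f$ \emph{is} the exponential sum there, but expanding the sixth power and integrating only yields the conditions $\sum n_i-\sum n_{i+3}\equiv 0\pmod{q^k}$ and $\sum n_i^2-\sum n_{i+3}^2\equiv 0\pmod{q^k}$, not equalities; since $\sum n_i^2$ ranges up to $3q^{2k}$, there are many spurious congruence solutions and you do not recover $\|\sum_n a_n e(nx+n^2t)\|_{L^6(\T^2)}$.

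The missing idea is to shrink the Fourier support drastically: take $\wh F=\sum_n q^{20t}a_n 1_{(n,n^2)+B(0,q^{-10t})}$ with $\delta=q^{-t}$. Then the tiny balls still sit inside $\ta_{K_n}$, but now $F(x)=(\sum_n a_n\chi(nx_1+n^2x_2))\,1_{B(0,q^{10t})}$ has a single large common spatial support. Expanding $\|F\|_{L^6}^6$ gives the congruence conditions $\pmod{q^{10t}}$, and since $|\sum n_i-\sum n_{i+3}|\leq 3q^t$ and $|\sum n_i^2-\sum n_{i+3}^2|\leq 3q^{2t}\ll q^{10t}$, these congruences force genuine equalities. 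This is exactly how the paper proceeds, and it is this congruence-to-equality step (the same one used in efficient congruencing) that your proposal is missing.
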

\begin{proof}
By zeroing out entries, $K(N)$ is increasing in $N$. 
Thus it suffices to only show the desired estimate
in the case when $N = q^{t}$ for some $t \in \N$. 
Indeed, suppose we knew this for all $N \in q^{\N}$.
Then for $N \not\in q^{\N}$, we can find an $s$ such that $q^{s} < N < q^{s + 1}$
and hence for some $C_{\vep}$, 
$K(N) \leq K(q^{s + 1}) \leq C_{\vep, q} (q^{s + 1})^{\vep} = C_{\vep, q}q^{\vep}q^{s\vep} \leq C_{\vep, q}q^{\vep}N^{\vep}.$

Choose $\delta = q^{-t}$ and $F$ such that
\begin{align*}
\wh{F}(\xi, \eta) = \sum_{n = 1}^{q^t}q^{20t}a_{n}1_{(n, n^2) + B(0, q^{-10t})}(\xi, \eta).
\end{align*}
We first check that $\wh{F}$ is supported in $\bigcup_{K \in P_{q^{-t}}}\ta_K$.
It suffices to show that for each $K = I(n, q^{-t})$, we have 
$(n, n^2) + B(0, q^{-10t}) \subset \ta_{K}$. Indeed if $|\xi - n| \leq q^{-10t}$
and $|\eta - n^2| \leq q^{-10t}$, then $\xi \in K$ and
\begin{align*}
|\xi^2 - \eta| = |(\xi - n)^2 + 2n(\xi - n) + n^{2} - \eta| \leq \max(|\xi - n|^2, |2n||\xi -n|, |n^2 - \eta|). 
\end{align*}
Since $q \geq 3$ is an odd prime, $|2n| \leq 1$ and so the above is $\leq q^{-10t} \leq q^{-2t}$.

Inverting the Fourier transform gives that
\begin{align*}
F(x) = (\sum_{n = 1}^{q^t}a_{n}\chi(x_1 n + x_{2}n^2))1_{B(0, q^{10t})}(x)
\end{align*}
and $F_{K} = a_{n}\chi(x_1 n + x_{2}n^2)1_{B(0, q^{10t})}(x)$ with $K = I(n, q^{-t})$.
This gives that $\nms{F_K}_{L^{6}(\F^2)} = q^{20t/6}|a_{n}|$ and so
\begin{align}\label{rhs}
(\sum_{K \in P_{\delta}}\nms{F_K}_{L^{6}(\F^2)}^{2})^{3} = q^{20t}(\sum_{n = 1}^{q^t}|a_n|^{2})^{3}.
\end{align}
On the other hand, expanding yields
\begin{align*}
&\nms{F}_{L^{6}(\F^2)}^{6} = \sum_{n_1, \ldots, n_6 = 1}^{q^t}a_{n_1}a_{n_2}a_{n_3}\ov{a_{n_4}a_{n_5}a_{n_6}}\times\\
&\int_{B(0, q^{10t})}\chi((n_1 + n_2 + n_3 - n_4 - n_5 - n_6)x_1 + (n_{1}^{2} + n_{2}^{2} + n_{3}^{2} - n_{4}^{2} - n_{5}^{2} - n_{6}^{2})x_2)\, dx
\end{align*}
and applying Proposition \ref{ballft} gives that the above is equal to
\begin{align}
\begin{aligned}\label{fl6}
 q^{20t}\sum_{n_1, \ldots, n_6 = 1}^{q^t}a_{n_1}a_{n_2}a_{n_3}\ov{a_{n_4}a_{n_5}a_{n_6}}1_{|n_1 + n_2 + n_3 - n_4 - n_5 - n_6| \leq q^{-10t}, |n_{1}^{2} + n_{2}^{2} + n_{3}^{2} -n_{4}^{2} - n_{5}^{2} - n_{6}^{2}| \leq q^{-10t}}.
\end{aligned}
\end{align}
Note that $(n_1, \ldots, n_6) \in ([1, q^t] \cap \N)^{6}$ satisfies
\begin{align}
\begin{aligned}\label{congruence}
n_{1} + n_2 + n_3 - n_4 - n_5 - n_6 &\equiv 0 \Mod{q^{10t}}\\
n_{1}^{2} + n_{2}^{2} + n_{3}^{2} - n_{4}^{2} - n_{5}^{2} - n_{6}^{2} &\equiv 0 \Mod{q^{10t}}
\end{aligned}
\end{align}
if and only if it satisfies
\begin{align}\label{solution}
\begin{aligned}
n_{1} + n_2 + n_3 - n_4 - n_5 - n_6 &= 0\\
n_{1}^{2} + n_{2}^{2} + n_{3}^{2} - n_{4}^{2} - n_{5}^{2} - n_{6}^{2} &= 0.
\end{aligned}
\end{align}
Indeed, since $n_{i} \in [1, q^t] \cap \N$, we have $n_{1} + n_2 + n_3 - n_4 - n_5 - n_6 \in [-3q^t, 3q^t] \cap \N$ and 
$n_{1}^{2} + n_{2}^{2} + n_{3}^{2} - n_{4}^{2} - n_{5}^{2} - n_{6}^{2} \in [-3q^{2t}, 3q^{2t}] \cap \N$.
Since $3q^{t} \leq 3q^{2t} \leq q^{10t}$, the only integer $\equiv 0 \Mod{q^{10t}}$ in $[-3q^t, 3q^t]$ and $[-3q^{2t}, 3q^{2t}]$
is 0 itself.
Thus \eqref{fl6} is equal to
\begin{align*}
q^{20t}\nms{\sum_{n = 1}^{q^t}a_{n}e(nx_1 + n^{2}x_2)}_{L^{6}(\T^2)}^{6}.
\end{align*}
Combining this with \eqref{rhs} and \eqref{l6dec}, then shows that 
$K(N) \lsm_{q, \vep} N^{\vep}$ as desired.
\end{proof}
\begin{rem}
The equivalence of \eqref{congruence} with \eqref{solution} for $n_{i} \in [1, q^t] \cap \N$ is the same equivalence
used in efficient congruencing, see for example \cite[Eq. (2.9)]{WooleyNested}.
\end{rem}

\section{A proof of $l^{2}L^{6}$ parabola decoupling}\label{l2l6pf}
Our main goal now is to prove that $D_{6}(\delta) \lsm_{\vep} \delta^{-\vep}$. 
For ease of notation, we will let $D(\delta) := D_{6}(\delta) = D_{6, \F}(\delta)$. 

\subsection{Parabolic rescaling}
\begin{prop}
Let $f$ be a Schwartz function on $\F^2$ Fourier supported in $\bigcup_{K \in P_{\delta}}\ta_K$.
Then for any interval $I \subset \O$ of length $\sigma \geq \delta$, we have
\begin{align*}
\nms{f_{I}}_{L^{p}(\F^2)} \leq D_{p}(\frac{\delta}{\sigma})(\sum_{K \in P_{\delta}(I)}\nms{f_K}_{L^{p}(\F^2)}^{2})^{1/2}.
\end{align*} 
\end{prop}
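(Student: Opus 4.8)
The plan is the standard \emph{parabolic rescaling} argument, which over $\F$ is cleaner than over $\R$ because the affine change of variables involved is an exact automorphism carrying the relevant sets \emph{onto} (not merely comparably onto) each other. Write $I = I(b,\sigma)$ with $b \in \O$; note $I \in P_\sigma$ by Corollary \ref{intintersect}, and set $\delta' := \delta/\sigma \in q^{-\N}$ and $\ta_\sigma := \sigma\O \times \sigma^2\O$. On the frequency side I would introduce the affine map $\Phi(\zeta) := D_\sigma^{-1}M_b^{-1}\bigl(\zeta - (b,b^2)\bigr)$, where $M_b = (\begin{smallmatrix}1 & 0\\ 2b & 1\end{smallmatrix})$ and $D_\sigma := (\begin{smallmatrix}\sigma & 0\\ 0 & \sigma^2\end{smallmatrix})$. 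By Lemma \ref{curvestraight} (equivalently by \eqref{takma} applied at scale $\sigma$), $\ta_I = (b,b^2) + M_b\ta_\sigma$, so $\Phi(\ta_I) = D_\sigma^{-1}\ta_\sigma = \O^2$.

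The first step is to check that $\Phi$ carries the fine partition of $I$ onto the full partition of $\O$. For $K = I(a,\delta) \in P_\delta(I)$ one has $a \in I$, hence $|a-b| \le \sigma$ and $a' := (a-b)/\sigma \in \O$; a one-line matrix computation using $M_b^{-1}M_a = (\begin{smallmatrix}1 & 0\\ 2(a-b) & 1\end{smallmatrix})$, $D_\sigma^{-1}M_b^{-1}M_aD_\sigma = M_{a'}$, $M_b^{-1}\bigl((a,a^2)-(b,b^2)\bigr) = (a-b,(a-b)^2)$, and $D_\sigma^{-1}\ta_\delta = \ta_{\delta'}$ shows $\Phi(\ta_K) = (a',a'^2) + M_{a'}\ta_{\delta'} = \ta_{I(a',\delta')}$. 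Thus $K \mapsto K' := I(a',\delta')$ is a bijection from $P_\delta(I)$ onto $P_{\delta'}$, and $\Phi$ maps $\bigcup_{K \in P_\delta(I)}\ta_K$ onto $\bigcup_{K' \in P_{\delta'}}\ta_{K'}$.

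Next I would transfer the estimate. Put $A := D_\sigma^{-1}M_b^{-1}$ and define $g$ by $\wh{g} := \wh{f_I}\circ\Phi^{-1}$; then $g$ is Schwartz and $\wh{g}$ is supported in $\Phi\bigl(\bigcup_{K\in P_\delta(I)}\ta_K\bigr) = \bigcup_{K'\in P_{\delta'}}\ta_{K'}$. Unwinding Fourier inversion through the linear substitution gives $g(x) = |\det A|\,\chi(c\cdot x)\,f_I(A^Tx)$ for a suitable $c \in \F^2$, whence $\nms{g}_{L^p(\F^2)} = |\det A|^{1-1/p}\nms{f_I}_{L^p(\F^2)}$. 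Since the $\ta_K$ ($K \in P_\delta$) are pairwise disjoint (their $\xi$-projections are disjoint intervals), $\wh{f_I}1_{\ta_K} = \wh{f_K}$; combining this with $1_{\ta_{K'}}\circ\Phi = 1_{\ta_K}$ and the disjointness of the $\ta_{K'}$ yields $\wh{g_{K'}} = \wh{f_K}\circ\Phi^{-1}$, hence $\nms{g_{K'}}_{L^p(\F^2)} = |\det A|^{1-1/p}\nms{f_K}_{L^p(\F^2)}$. Applying the definition of $D_p(\delta')$ to $g$ and cancelling the common positive factor $|\det A|^{1-1/p}$ from both sides leaves exactly $\nms{f_I}_{L^p(\F^2)} \le D_p(\delta/\sigma)\bigl(\sum_{K\in P_\delta(I)}\nms{f_K}_{L^p(\F^2)}^2\bigr)^{1/2}$.

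The argument is essentially bookkeeping, and I do not expect a serious obstacle. The only points that need care are (i) verifying that $\Phi$ maps each $\ta_K$ \emph{exactly} onto $\ta_{K'}$ — this is precisely where the $\F$-setting removes the ``essentially'' that clutters the real-variable proof — and (ii) tracking the Jacobian factor $|\det A|^{1-1/p}$, which is harmless since it appears identically on the two sides of the rescaled decoupling inequality and so cancels.
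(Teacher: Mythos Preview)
Your proof is correct and is essentially the same parabolic rescaling argument as the paper's: both apply the affine change of variables $\zeta\mapsto D_\sigma^{-1}M_b^{-1}(\zeta-(b,b^2))$ on the frequency side (the paper writes this out explicitly as the substitution $s'=\sigma^{-1}(s-a)$, $t'=\sigma^{-2}t$ and records the spatial-side map $T_{a,\sigma}$, which is exactly your $(A^T)^{-1}$), observe that this carries $\ta_K$ onto $\ta_{K'}$ and hence the rescaled function is admissible for $D_p(\delta/\sigma)$, and then cancel the common Jacobian factor. Your presentation packages the bookkeeping more cleanly via the bijection $K\mapsto K'$, but the content is identical.
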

\begin{proof}
Write $I = I(a, \sigma)$ for some $a \in I$. We have
\begin{align*}
|f_{I}(x)| = |\int_{|t| \leq \delta^{2}}\int_{I(a, \sigma)}\wh{f}(s, s^2 + t)\chi(sx_1 + s^{2}x_2)e(tx_2)\, ds\, dt|.
\end{align*}
Let $s' := \sigma^{-1}(s - a)$ and $t' := \sigma^{-2}t$. Then the above is equal to
\begin{align*}
|\sigma^{3}\int_{|t'| \leq (\delta/\sigma)^{2}}\int_{\O}\wh{f}(\sigma s' + a, (\sigma s' + a)^{2} + \sigma^{2}t')\chi(\sigma s'(x_1 + 2ax_2) + \sigma^{2}s'^{2}x_2)\chi(\sigma^{2}t' x_2)\, ds'\, dt'|.
\end{align*}
Write $T_{a, \sigma} = (\begin{smallmatrix} \sigma & 2a\sigma\\0 & \sigma^2\end{smallmatrix})$,
$f_{a}(x) := f(x)\chi(ax_1 + a^{2} x_2)$, and $g_{a}(y) = \sigma^{-3}f_{a}(T_{a, \sigma}^{-1}y)$, 
then $\wh{g_{a}}(s', s'^{2} + t') = \wh{f}(\sigma s' + a, (\sigma s' + a)^{2} + \sigma^{2}t')$ and
$g_{a}$ is Fourier supported in $\bigcup_{K' \in P_{\delta/\sigma}}\ta_{K'}$. Therefore
$\nms{f_{I}}_{L^{p}(\F^2)} = \sigma^{3 - 3/p}\nms{g_{a}}_{L^{p}(\F^2)}$.
Applying the definition of $D_{p}(\delta/\sigma)$ and undoing the above change of variables finishes the proof.
\end{proof}

\subsection{Bilinear reduction}
Let $\nu \in q^{-\N}$ be an intermediate scale chosen later such that $\delta \ll \nu \ll 1$.
On a first read, it may be useful to set $\nu = 1/q$ though we will later choose
it to be something larger, see the proof of Proposition \ref{iter3}.
Suppose $a$ and $b$ were integers such that $\delta \leq \min(\nu^a, \nu^b)$.
Let $M_{a, b}(\delta, \nu)$
be the best constant such that
\begin{align}\label{mabdef}
\int_{\F^2}|f_{I}|^{2}|f_{I'}|^{4} \leq M_{a, b}(\delta, \nu)^{6}(\sum_{K \in P_{\delta}(I)}\nms{f_K}_{L^{6}(\F^2)}^{2})(\sum_{K' \in P_{\delta}(I')}\nms{f_{K'}}_{L^{6}(\F^2)}^{2})^{2}
\end{align}
for all $f$ Fourier supported in $\bigcup_{K \in P_{\delta}}\ta_K$ and all intervals
$I, I' \subset \O$ with $|I| = \nu^a$, $|I'| = \nu^b$ and $d(I, I') > \nu$.

If we think of $\nu \approx \delta^{\vep}$ and $a$ and $b$
are small, then we essentially have $M_{a, b}(\delta, \nu) \approx D(\delta)$
as $I$ and $I'$ are rather large intervals and we have not
decoupled much. However, if $a$ and $b$
were so large such that $\nu^a$ and $\nu^b$ are basically the same size
as $\delta$, then $M_{a, b}(\delta, \nu) \approx 1$
after applying the triangle inequality.
Thus our goal will be to pass from small $(a, b)$ to large $(a, b)$ efficiently.

The following trivial bound will be useful when we reach large $(a, b)$.
\begin{lemma}\label{mabtriv}
If $\delta \leq \min(\nu^a, \nu^b)$, then
$$M_{a, b}(\delta, \nu) \leq D(\frac{\delta}{\nu^a})^{1/3}D(\frac{\delta}{\nu^b})^{2/3}.$$
\end{lemma}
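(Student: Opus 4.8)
The plan is to combine H\"older's inequality with the parabolic rescaling proposition proved above; no induction or geometry is needed, which is why this is only the "trivial" bound on $M_{a,b}(\delta,\nu)$.

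First I would split the bilinear integral on the left-hand side of \eqref{mabdef} into its two constituent factors. By H\"older's inequality on $\F^2$ with the conjugate exponents $3$ and $3/2$,
$$\int_{\F^2}|f_I|^2|f_{I'}|^4 \leq \Big(\int_{\F^2}|f_I|^6\Big)^{1/3}\Big(\int_{\F^2}|f_{I'}|^6\Big)^{2/3} = \nms{f_I}_{L^6(\F^2)}^2\,\nms{f_{I'}}_{L^6(\F^2)}^4 .$$
Here the exponents $2$ and $4$ in the bilinear expression are exactly matched to the H\"older pair $(3,3/2)$, so the two $L^6$ pieces decouple cleanly.

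Next I would apply parabolic rescaling to each factor separately. Since $f$ is Fourier supported in $\bigcup_{K\in P_\delta}\ta_K$ and $|I| = \nu^a \geq \delta$ (which also guarantees $\delta/\nu^a \in q^{-\N}$, so $D(\delta/\nu^a)$ makes sense), the rescaling proposition gives
$$\nms{f_I}_{L^6(\F^2)} \leq D\big(\tfrac{\delta}{\nu^a}\big)\Big(\sum_{K\in P_\delta(I)}\nms{f_K}_{L^6(\F^2)}^2\Big)^{1/2},$$
and likewise, using $|I'| = \nu^b \geq \delta$,
$$\nms{f_{I'}}_{L^6(\F^2)} \leq D\big(\tfrac{\delta}{\nu^b}\big)\Big(\sum_{K'\in P_\delta(I')}\nms{f_{K'}}_{L^6(\F^2)}^2\Big)^{1/2}.$$
Substituting these two bounds into the H\"older estimate yields
$$\int_{\F^2}|f_I|^2|f_{I'}|^4 \leq D\big(\tfrac{\delta}{\nu^a}\big)^2 D\big(\tfrac{\delta}{\nu^b}\big)^4 \Big(\sum_{K\in P_\delta(I)}\nms{f_K}_{L^6(\F^2)}^2\Big)\Big(\sum_{K'\in P_\delta(I')}\nms{f_{K'}}_{L^6(\F^2)}^2\Big)^2,$$
and comparing with the definition of $M_{a,b}(\delta,\nu)$ as the best constant in \eqref{mabdef} and taking sixth roots gives $M_{a,b}(\delta,\nu) \leq D(\delta/\nu^a)^{1/3}D(\delta/\nu^b)^{2/3}$.

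There is no real obstacle here; the one point worth flagging is that the separation hypothesis $d(I,I')>\nu$ is never used in this argument — it is discarded by H\"older — which is precisely why this bound is lossy and only useful once $(a,b)$ is already large enough that $\nu^a,\nu^b$ are comparable to $\delta$ and $D(\delta/\nu^a), D(\delta/\nu^b)$ are $O_\vep(\delta^{-\vep})$ or even $O(1)$.
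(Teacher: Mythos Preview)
Your proof is correct and is essentially the same as the paper's: the paper's one-line proof says it follows from $\int f^{2}g^{4} \leq (\int f^{6})^{1/3}(\int g^{6})^{2/3}$, parabolic rescaling, and the definitions of $D(\delta)$ and $M_{a,b}(\delta,\nu)$, which is exactly what you have written out in full.
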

\begin{proof}
This follows immediately from $\int f^{2}g^4 \leq (\int f^6)^{1/3}(\int g^6)^{2/3}$, parabolic rescaling, and
applying the definitions of $D(\delta)$ and $M_{a, b}(\delta, \nu)$.
\end{proof}

The previous lemma also implies that if we knew a good bound for $D(\delta)$, then we would know a good
bound for $M_{a, b}(\delta, \nu)$. However, what we want is the opposite. This is accomplished in the next
proposition and essentially reduces the linear decoupling problem to a bilinear decoupling problem.
\begin{prop}\label{bired}
If $\delta \leq \nu$, then
$$D(\delta) \lsm D(\frac{\delta}{\nu}) + \nu^{-O(1)}M_{1, 1}(\delta, \nu).$$
\end{prop}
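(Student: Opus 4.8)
The plan is to run the standard Bourgain--Demeter broad/narrow (here bilinear/lower-dimensional) dichotomy at scale $\nu$, exploiting that over $\F$ intervals behave like dyadic intervals so there are no boundary overlaps to worry about. First I would partition $\O$ into the intervals $J \in P_{\nu}$, of which there are $\nu^{-1}$, and write $f = \sum_{J \in P_{\nu}} f_J$. The pointwise dichotomy is: either a single $J$ dominates, i.e. $|f(x)| \lsm \nu^{-O(1)} \max_{J} |f_J(x)|$ (the ``narrow'' case), or there exist two intervals $J, J' \in P_{\nu}$ with $d(J, J') > \nu$ such that $|f(x)|^2 \lsm \nu^{-O(1)} |f_J(x)| |f_{J'}(x)|$ (the ``broad'' case). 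Concretely one can just say $|f(x)| \le \sum_J |f_J(x)| \le \nu^{-1} \max_J |f_J(x)|$, and if the two largest values $|f_{J_1}(x)| \ge |f_{J_2}(x)|$ come from intervals with $d(J_1, J_2) \le \nu$ then by Corollary~\ref{intintersect}-type reasoning all the large contributions come from a bounded ($O(1)$, in fact one or two by ultrametricity) cluster of $\nu$-intervals, so $|f(x)| \lsm \max_{\text{cluster } J} |f_J(x)|$; otherwise $J_1, J_2$ are $\nu$-separated and $|f(x)|^2 \lsm \nu^{-2} |f_{J_1}(x)||f_{J_2}(x)|$.

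Then I would raise to the sixth power and integrate, splitting $\F^2 = \{$narrow$\} \cup \{$broad$\}$. On the narrow part, $\int |f|^6 \lsm \nu^{-O(1)} \sum_{J \in P_{\nu}} \int |f_J|^6$, and each $f_J$ is Fourier supported in $\bigcup_{K \in P_\delta(J)} \ta_K$, so parabolic rescaling (the Proposition preceding Lemma~\ref{mabtriv}) with $\sigma = \nu$ gives $\nms{f_J}_{L^6} \le D(\tfrac{\delta}{\nu}) (\sum_{K \in P_\delta(J)} \nms{f_K}_{L^6}^2)^{1/2}$; summing the squares over $J$ reassembles $\sum_{K \in P_\delta} \nms{f_K}_{L^6}^2$ and contributes the $D(\delta/\nu)$ term. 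On the broad part, $\int |f|^6 \lsm \nu^{-O(1)} \sum_{d(J,J') > \nu} \int |f_J|^3 |f_{J'}|^3 \le \nu^{-O(1)} \sum_{d(J,J')>\nu} \int |f_J|^2 |f_{J'}|^4$ after using $\int |g|^3|h|^3 \le (\int |g|^2|h|^4)^{1/2}(\int |g|^4 |h|^2)^{1/2}$ and symmetry (or just bound $|f_J|^3|f_{J'}|^3 \le \tfrac12(|f_J|^2|f_{J'}|^4 + |f_J|^4 |f_{J'}|^2)$ pointwise); each such term is $\le M_{1,1}(\delta,\nu)^6 (\sum_{K \in P_\delta(J)} \nms{f_K}_{L^6}^2)(\sum_{K' \in P_\delta(J')} \nms{f_{K'}}_{L^6}^2)^2$, and bounding the $\nu^{-1}$-fold sums over $J, J'$ crudely gives $\nu^{-O(1)} M_{1,1}(\delta,\nu)^6 (\sum_{K \in P_\delta} \nms{f_K}_{L^6}^2)^3$. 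Taking sixth roots and adding the two cases yields the claimed $D(\delta) \lsm D(\delta/\nu) + \nu^{-O(1)} M_{1,1}(\delta,\nu)$.

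The step I expect to be the main (though modest) obstacle is making the narrow/cluster case genuinely trivial: over $\R$ this is where one loses a lower-dimensional decoupling factor and has to be careful, but over $\F$ the ultrametric inequality should collapse it entirely — if the two dominant $\nu$-intervals are \emph{not} $\nu$-separated then by Lemma~\ref{lengthpart} they must literally coincide, so the ``narrow'' case is just ``one interval dominates,'' and there is no intermediate lower-dimensional piece at all. One should double-check that $d(J,J') > \nu$ (strict, matching the $M_{1,1}$ definition which requires $d(I,I') > \nu$) is exactly the complement of ``$J = J'$'' for $J, J' \in P_\nu$; by Lemma~\ref{lengthpart} distinct $\nu$-intervals are separated by at least $q^{-N+1} = q\nu > \nu$, so this is fine. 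The remaining bookkeeping — tracking that the $\nu^{-O(1)}$ powers are harmless and that all Fourier-support conditions are preserved under restriction to $f_J$ (Lemma~\ref{wavepacketdecomposition}) and under parabolic rescaling — is routine.
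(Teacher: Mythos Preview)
Your proposal follows the same broad/narrow strategy as the paper's \emph{second} proof of Proposition~\ref{bired}, and the overall architecture (narrow $\Rightarrow$ parabolic rescaling; broad $\Rightarrow$ $M_{1,1}$ via $\int f^3g^3 \le (\int f^2g^4)^{1/2}(\int f^4g^2)^{1/2}$) is correct. But there is a genuine gap in how you set up the pointwise dichotomy.

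You split according to whether the two \emph{distinct} intervals $J_1,J_2$ carrying the two largest values of $|f_J(x)|$ satisfy $d(J_1,J_2)\le\nu$ or not. Over $\F$ this split is vacuous: by Lemma~\ref{lengthpart} any two distinct $J_1,J_2\in P_\nu$ are separated by $q\nu>\nu$, so you are always in your ``broad'' case. But then your claimed broad bound $|f(x)|^2\lsm \nu^{-2}|f_{J_1}(x)||f_{J_2}(x)|$ is simply false when $|f_{J_2}(x)|$ is tiny compared with $|f_{J_1}(x)|$: from $|f(x)|\le\nu^{-1}|f_{J_1}(x)|$ you cannot pass to $\nu^{-1}|f_{J_1}|^{1/2}|f_{J_2}|^{1/2}$ without a lower bound on $|f_{J_2}|/|f_{J_1}|$. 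The dichotomy has to be based on a \emph{threshold}, exactly as the paper does: set $\mc I_{x_0}=\{J:|f_J(x_0)|\ge\nu\max_{J'}|f_{J'}(x_0)|\}$. If $\#\mc I_{x_0}\ge 2$, any two members are automatically $>\nu$-separated and both exceed $\nu\cdot\max$, so the bilinear bound is valid; if $\#\mc I_{x_0}<2$, then $|f(x_0)|\le |f_{J^\ast}(x_0)|+\sum_{J\ne J^\ast}|f_J(x_0)|\le |f_{J^\ast}(x_0)|+\nu^{-1}\cdot\nu\max=2\max$, i.e.\ the narrow bound holds with an \emph{absolute} constant.

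This last point matters: your narrow estimate $\int|f|^6\lsm \nu^{-O(1)}\sum_J\int|f_J|^6$ would only yield $D(\delta)\lsm \nu^{-O(1)}D(\delta/\nu)+\nu^{-O(1)}M_{1,1}(\delta,\nu)$, which is strictly weaker than the proposition and in fact breaks the iteration in Proposition~\ref{iter3} (there the coefficient in front of $\mf D(q^{-1}\delta^{1-1/2^N})$ must be independent of $\delta$). With the threshold dichotomy the narrow constant is $O(1)$ and everything goes through.

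For comparison, the paper's \emph{first} proof avoids the pointwise dichotomy altogether: write $\int|f|^6=\int|\sum_{J,J'}f_Jf_{J'}|^3$, separate the diagonal $J=J'$ from the off-diagonal (which over $\F$ is exactly $d(J,J')>\nu$), and bound the diagonal directly by Minkowski, $\int|\sum_J f_J^2|^3\le(\sum_J\|f_J\|_6^2)^3$, before parabolic rescaling. This is shorter and sidesteps the issue entirely.
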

\begin{proof}
Fix arbitrary $f$ Fourier supported in $\bigcup_{K \in P_{\delta}}\ta_K$.
We have
\begin{align*}
\int_{\F^2}|f|^{6} &= \int_{\F^2}|\sum_{J, J' \in P_{\nu}}f_{J}f_{J'}|^{3} \lsm \int_{\F^2}|\sum_{J \in P_{\nu}}f_{J}^{2}|^{3} + \int_{\F^2}|\sum_{\st{J, J' \in P_{\nu}\\d(J, J') > \nu}}f_{J}f_{J'}|^{3}.
\end{align*}
By Minkowski's inequality and parabolic rescaling, the first term above is
\begin{align}\label{firstbil}
\leq (\sum_{J \in P_{\nu}}\nms{f_{J}}_{L^{6}(\F^2)}^{2})^{\frac{1}{2} \cdot 6} \leq D(\frac{\delta}{\nu})^{6}(\sum_{K \in P_{\delta}}\nms{f_K}_{L^{6}(\F^2)}^{2})^{\frac{1}{2} \cdot 6}.
\end{align}
On the other hand, the second term is
\begin{align}\label{secondbil}
\leq \nu^{-O(1)}\max_{\st{J, J' \in P_{\nu}\\d(J, J') > \nu}}\int_{\F^2}|f_{J}|^{3}|f_{J'}|^{3}.
\end{align}
Observing that $\int f^{3}g^{3} \leq (\int f^{2}g^{4})^{1/2}(\int f^{4}g^{2})^{1/2}$ and then applying the definition of
$M_{1, 1}(\delta, \nu)$ to each term then shows that \eqref{secondbil} is
$$\leq \nu^{-O(1)}M_{1, 1}(\delta, \nu)^{6}(\sum_{K \in P_{\delta}}\nms{f_K}_{L^{6}(\F^2)}^{2})^{3}.$$
Combining this with \eqref{firstbil} and applying the definition of $D(\delta)$ completes the proof.
\end{proof}
As our argument is bilinear in nature, we could use H\"{o}lder's inequality to reduce to the bilinear decoupling problem.
Before we continue, we give a second proof of Proposition \ref{bired} that relies on the broad/narrow decomposition first introduced
by Bourgain and Guth in \cite{BG11}. The broad/narrow decomposition is more commonly used to reduce linear decoupling problems
to multilinear decoupling problems.
Let $f$ be Fourier supported in $\bigcup_{K \in P_{\delta}}\ta_K$. For each $x_0 \in \F^2$,
let $\mc{I}_{x_0}$ be the set of all intervals $J' \in P_{\nu}$ such that
$|f_{J'}(x_0)| \geq \nu \max_{J \in P_{\nu}}|f_{J}(x_0)|$. We have two cases: $\# \mc{I}_{x_0} \geq 2$
or $\#\mc{I}_{x_0} < 2$.

If $\#\mc{I}_{x_0} \geq 2$, let $J_{1}$ and $J_{2}$ be two distinct
intervals in $\mc{I}_{x_0}$. Note that the choice of $J_1$ and $J_2$ depends
on $x_0$. We then have
\begin{align*}
|f(x_0)| \leq \sum_{J \in P_{\nu}}|f_{J}(x_0)| &\leq \nu^{-1}\max_{J \in P_{\nu}}|f_{J}(x_0)|\\
& \leq \nu^{-2}|f_{J_1}(x_0)|^{1/2}|f_{J_2}(x_0)|^{1/2} \leq \nu^{-2}\max_{\st{J, J' \in P_{\nu}\\d(J, J') > \nu}}|f_{J}(x_0)f_{J'}(x_0)|^{1/2}
\end{align*}
where in the third inequality we have used that $J_1, J_2 \in \mc{I}_{x_0}$ and in the last inequality
we have used that $J_1$ and $J_2$ are distinct and so $d(J_1, J_2) \geq q\nu$ by Lemma \ref{lengthpart}.
On the other hand if $\#\mc{I}_{x_0} < 2$, then we have
\begin{align*}
|f(x_0)| \leq |\sum_{J \in \mc{I}_{x_0}}f_{J}(x_0)| + |\sum_{J \not\in \mc{I}_{x_0}}f_{J}(x_0)| < 2\max_{J \in P_{\nu}}|f_{J}(x_0)| + \sum_{J \not\in \mc{I}_{x_0}}|f_{J}(x_0)| \leq 3\max_{J \in P_{\nu}}|f_{J}(x_0)|.
\end{align*}
Therefore we have shown that for each $x \in \F^2$, we have
\begin{align*}
|f(x)| \leq 3\max_{J \in P_{\nu}}|f_{J}(x)| + \nu^{-2}\max_{\st{J, J' \in P_{\nu}\\d(J, J') > \nu}}|f_{J}(x)f_{J'}(x)|^{1/2}.
\end{align*}
Raising both sides to the 6th power and integrating gives
\begin{align*}
\int_{\F^2}|f|^{6} &\lsm \int_{\F^2}\max_{J \in P_{\nu}}|f_{J}(x)|^{6}\, dx + \nu^{-O(1)}\int_{\F^2}\max_{\st{J, J' \in P_{\nu}\\d(J, J') > \nu}}|f_{J}(x)f_{J'}(x)|^{3}\\
&\lsm \int_{\F^2}(\sum_{J \in P_{\nu}}|f_{J}(x)|^{2})^{3} + \nu^{-O(1)}\max_{\st{J, J' \in P_{\nu}\\d(J, J') > \nu}}\int_{\F^2}|f_{J}(x)f_{J'}(x)|^{3}.
\end{align*} 
Now proceeding as in \eqref{firstbil} and \eqref{secondbil}
then finishes the proof.

\subsection{The key step}
The following lemma turns out to be the efficient way to climb up from small $(a, b)$ to large $(a, b)$.
\begin{prop}\label{keystep1}
If $1 \leq a \leq 2b$ and $\delta \leq \nu^{2b}$, then $$M_{a, b}(\delta, \nu) \lsm \nu^{-O(1)}M_{2b, b}(\delta, \nu).$$
\end{prop}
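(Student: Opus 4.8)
The plan is to prove the claim by an almost--orthogonality argument carried out inside the wavepacket decomposition of the ``low--power'' factor $f_{I'}$. Fix $f$ Fourier supported in $\bigcup_{K\in P_{\delta}}\ta_K$ and intervals $I,I'\subset\O$ with $|I|=\nu^{a}$, $|I'|=\nu^{b}$, $d(I,I')>\nu$, and fix $a'\in I'$. Since $f_{I'}$ is Fourier supported in $\ta_{I'}$, Lemma~\ref{wavepacketdecomposition} applied at scale $\nu^{b}$ gives $f_{I'}=\sum_{T'\in\T(I')}f_{I'}1_{T'}$ with $|f_{I'}|$ equal to a nonnegative constant $c_{T'}$ on each $T'$, so $\int_{\F^2}|f_I|^2|f_{I'}|^4=\sum_{T'\in\T(I')}c_{T'}^{4}\int_{T'}|f_I|^2$. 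Decomposing $f_I=\sum_{J\in P_{\nu^{2b}}(I)}f_J$ (legitimate since $\delta\le\nu^{2b}\le\nu^{a}$, using the hypothesis $a\le 2b$), I would show that the Fourier supports of the functions $\{1_{T'}f_J\}_{J\in P_{\nu^{2b}}(I)}$ have overlap $\lsm\nu^{-1}$; Plancherel together with Cauchy--Schwarz then gives $\int_{T'}|f_I|^2\lsm\nu^{-1}\sum_J\int_{T'}|f_J|^2$, and summing back over $T'$ yields $\int_{\F^2}|f_I|^2|f_{I'}|^4\lsm\nu^{-1}\sum_{J\in P_{\nu^{2b}}(I)}\int_{\F^2}|f_J|^2|f_{I'}|^4$. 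Since $d(J,I')\ge d(I,I')>\nu$ for every $J\subset I$, the definition \eqref{mabdef} of $M_{2b,b}(\delta,\nu)$ applies to each summand, and crucially the factors $\sum_{K\in P_{\delta}(J)}\nms{f_K}_{L^6(\F^2)}^2$ add up over $J$ to exactly $\sum_{K\in P_{\delta}(I)}\nms{f_K}_{L^6(\F^2)}^2$, so \emph{no further loss occurs} and one concludes $M_{a,b}(\delta,\nu)\lsm\nu^{-1/6}M_{2b,b}(\delta,\nu)$.

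For the Fourier support inclusion I would argue as follows. We have $\operatorname{supp}\wh{1_{T'}f_J}\subset\ta_J+\operatorname{supp}\wh{1_{T'}}$, and as in the proof of Lemma~\ref{wavepacketdecomposition} one has $\operatorname{supp}\wh{1_{T'}}=\ta_{I'}-\ta_{I'}=M_{a'}\ta_{\nu^{b}}$. Writing $\ta_J=(j,j^2)+M_j\ta_{\nu^{2b}}$ for $j\in J$ via \eqref{takma}, I would check that $M_j\ta_{\nu^{2b}}+M_{a'}\ta_{\nu^{b}}=M_{a'}\ta_{\nu^{b}}$, i.e.\ the finer module is absorbed by the coarser one; this uses $a\le 2b$ (so $\nu^{2b}\le\nu^{b}$), that $|2|=1$, and that $|j-a'|=d(I,I')\le1$ is the \emph{same} value for every $j\in I$ --- the last point following from $a\ge1$ (hence $b\ge1$, $|I|=\nu^{a}\le\nu<d(I,I')$ and $|I'|=\nu^{b}\le\nu<d(I,I')$) together with the equality case of the ultrametric inequality, exactly as in the proof of Lemma~\ref{lengthpart}. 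Consequently $\operatorname{supp}\wh{1_{T'}f_J}\subset(j,j^2)+M_{a'}\ta_{\nu^{b}}$ is a translate of the fixed parallelogram $M_{a'}\ta_{\nu^{b}}$, whose first coordinate ranges over an interval of length $\nu^{b}$ and which has ``height'' $\nu^{2b}$ in its sheared direction.

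The heart of the argument --- and the step I expect to be the main obstacle --- is the overlap count. For $J_1\ne J_2$ the two translates $(j_1,j_1^2)+M_{a'}\ta_{\nu^{b}}$ and $(j_2,j_2^2)+M_{a'}\ta_{\nu^{b}}$ intersect iff $(j_1-j_2,\,j_1^2-j_2^2)\in M_{a'}\ta_{\nu^{b}}$, that is, iff $|j_1-j_2|\le\nu^{b}$ and $|j_1-j_2|\,|j_1+j_2-2a'|\le\nu^{2b}$. Set $\rho:=d(I,I')$ and $d:=|j_1-j_2|$. The second condition forces $j_2\in I(2a'-j_1,\nu^{2b}/d)$, an interval whose centre satisfies $|(2a'-j_1)-j_1|=|2(a'-j_1)|=\rho$; hence, for fixed $J_1$ and fixed dyadic scale $d$, the admissible $j_2$ are the intersection of a ``sphere'' of radius $d$ about $j_1$ with an interval sitting at distance $\rho>\nu$ from $j_1$. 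A short case analysis according to whether $\nu^{2b}/d$ is larger or smaller than $\rho$, using the ultrametric inequality, shows this intersection is nonempty only when $d\le\nu^{2b}/\rho$, in which case it contains $\lsm d/\nu^{2b}$ many $\nu^{2b}$-intervals; summing the resulting geometric series over the dyadic scales $d\le\nu^{2b}/\rho$ gives $\lsm\rho^{-1}<\nu^{-1}$ admissible $J_2$ for each $J_1$, so the overlap is $\lsm\nu^{-1}$. The delicate point is to keep the two constraints on $j_2$ coupled: the interval $I(2a'-j_1,\nu^{2b}/d)$ is centred \emph{far} from $j_1$ (at distance $\rho>\nu$), and it is precisely this transversality --- which is exactly where the separation hypothesis $d(I,I')>\nu$ is used --- that forces the count to be $\nu^{-O(1)}$; estimating the sphere and the interval separately would cost a factor $\nu^{-b}$ and destroy the bound.
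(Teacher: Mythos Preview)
Your argument is correct and is essentially the fourth of the five proofs the paper gives in Section~\ref{fiveproofs}: the paper first parabolically rescales $I'$ to the origin (so the tubes $T'$ become the vertical rectangles $T_{\nu^{b}}$), uses local constancy of $|f_{I(0,\nu^{b})}|$ on each such tube exactly as you do, and then runs Plancherel plus almost--orthogonality (the paper freezes $x_1$ and applies a one--dimensional Plancherel, whereas you do the two--dimensional version directly on $1_{T'}f_J$, but these are the same computation up to the shear $M_{a'}^{T}$).

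Your overlap count in the last paragraph is correct but more involved than necessary. Since $j_1,j_2\in I$ with $|I|=\nu^{a}\le\nu<\rho$, writing $j_1+j_2-2a'=2(j_1-a')-(j_1-j_2)$ and using $|2(j_1-a')|=\rho>|j_1-j_2|$ gives $|j_1+j_2-2a'|=\rho$ \emph{exactly}, so the condition $|j_1-j_2|\,|j_1+j_2-2a'|\le\nu^{2b}$ collapses immediately to $|j_1-j_2|\le\nu^{2b}/\rho$; no dyadic decomposition or case analysis is needed, and the overlap bound $\rho^{-1}\le(q\nu)^{-1}$ follows in one line. This is precisely the computation appearing (after the shift $a'\mapsto 0$) in the paper's third and fourth proofs.
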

To prove Proposition \ref{keystep1}, it suffices to show the following lemma.
\begin{lemma}\label{keystep2}
If $1 \leq a \leq 2b$, $|I| = \nu^a$, $|I'| = \nu^b$, $d(I, I') \geq q \nu$, and $\delta \leq \nu^{2b}$ we have
\begin{align*}
\int_{\F^2}|f_{I}|^{2}|f_{I'}|^{4} \lsm \nu^{-O(1)}\sum_{J \in P_{\nu^{2b}}(I)}\int_{\F^2}|f_{J}|^{2}|f_{I'}|^{4}
\end{align*}
for all $f$ Fourier supported in $\bigcup_{K \in P_{\delta}}\ta_{K}$.
\end{lemma}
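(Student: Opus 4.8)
The plan is to expand $|f_I|^2 = \sum_{J, J'' \in P_{\nu^{2b}}(I)} f_J \overline{f_{J''}}$ — valid since $\delta \le \nu^{2b} \le \nu^{a} = |I|$, so that $\wh{f_I} = \sum_{J \in P_{\nu^{2b}}(I)}\wh{f_J}$ with each $f_J$ Fourier supported in $\ta_J \subseteq \ta_I$ — and thereby to write
$$\int_{\F^2} |f_I|^2 |f_{I'}|^4 = \sum_{J, J'' \in P_{\nu^{2b}}(I)} \int_{\F^2} f_J \overline{f_{J''}}\, |f_{I'}|^4.$$
The heart of the matter is to show that a summand vanishes unless $J$ and $J''$ lie within distance $\nu^{2b}/d(I,I')$ of one another. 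Granting this, since $d(I,I') \ge q\nu$, each fixed $J$ can pair with at most $1/d(I,I') \le (q\nu)^{-1} \lsm \nu^{-1}$ intervals $J''$ (and symmetrically), and a routine AM--GM argument then yields the lemma with $\nu^{-O(1)} = \nu^{-1}$.

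To prove the vanishing I would invoke Plancherel: writing $|f_{I'}|^4 = f_{I'}^2\, \overline{f_{I'}^2}$, which is real,
$$\int_{\F^2} f_J \overline{f_{J''}}\, |f_{I'}|^4 = \int_{\F^2} \wh{f_J \overline{f_{J''}}}\ \,\overline{\wh{|f_{I'}|^4}},$$
so the integral is nonzero only if the two Fourier supports meet. By Lemma \ref{curvestraight}, $\ta_{I'} = (c,c^2) + M_c \ta_{\nu^b}$ for any $c \in I'$, and since $\ta_{\nu^b} = \nu^b\O \times \nu^{2b}\O$ is a subgroup of $(\F^2,+)$, $\wh{|f_{I'}|^4}$ is supported in $\ta_{I'} + \ta_{I'} - \ta_{I'} - \ta_{I'} = M_c \ta_{\nu^b} = \{(\xi,\eta) : |\xi| \le \nu^b,\ |\eta - 2c\xi| \le \nu^{2b}\}$. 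On the other side, $\wh{f_J \overline{f_{J''}}}$ is supported in $\ta_J - \ta_{J''}$, and since $\ta_J \subseteq \tau_J = B((d,d^2), \nu^{2b})$ for $d \in J$, Lemma \ref{ballprop}(i) gives $\ta_J - \ta_{J''} \subseteq B\bigl((d - d'',\, d^2 - (d'')^2),\, \nu^{2b}\bigr)$ for $d \in J$, $d'' \in J''$.

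So suppose the two sets share a point $(\xi, \eta)$. From $|\xi| \le \nu^b$ and $|\xi - (d - d'')| \le \nu^{2b}$ we get $|d - d''| \le \nu^b$; comparing the two constraints on $\eta$ gives $|2c\xi - (d^2 - (d'')^2)| \le \nu^{2b}$, and since $|2c\xi - 2c(d-d'')| \le |2c|\,\nu^{2b} \le \nu^{2b}$ (as $c \in \O$ and $q$ is odd), the ultrametric inequality gives
$$|d - d''|\,|d + d'' - 2c| = \bigl|(d^2 - (d'')^2) - 2c(d - d'')\bigr| \le \nu^{2b}.$$
Now $d + d'' - 2c = 2(d - c) + (d'' - d)$, where $|d - c| = d(I,I') \ge q\nu$, $|2(d-c)| = |d-c|$ (again $q$ odd), and $|d'' - d| \le |I| = \nu^a \le \nu < d(I,I')$ (here we use $a \ge 1$); hence $|d + d'' - 2c| = d(I,I')$ and so $|d - d''| \le \nu^{2b}/d(I,I')$, as claimed. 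The intervals $J''$ with $d(J, J'') \le \nu^{2b}/d(I,I')$ all lie inside one interval of length $\nu^{2b}/d(I,I')$, so there are at most $1/d(I,I')$ of them; call this set $S(J)$, so $\#S(J) \le (q\nu)^{-1}$ and, by symmetry, $\#\{J : J'' \in S(J)\} \le (q\nu)^{-1}$ as well.

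Finally, bounding $|f_J \overline{f_{J''}}| \le \tfrac12(|f_J|^2 + |f_{J''}|^2)$ and using $|f_{I'}|^4 \ge 0$,
$$\int_{\F^2}|f_I|^2|f_{I'}|^4 \le \sum_{J}\sum_{J'' \in S(J)} \int_{\F^2} |f_J|\,|f_{J''}|\,|f_{I'}|^4 \le \tfrac12\sum_{J}\sum_{J'' \in S(J)}\int_{\F^2}\bigl(|f_J|^2 + |f_{J''}|^2\bigr)|f_{I'}|^4,$$
and each of the two resulting double sums is at most $(q\nu)^{-1}\sum_{J \in P_{\nu^{2b}}(I)}\int_{\F^2}|f_J|^2|f_{I'}|^4$, which finishes the proof. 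The step I expect to require the most care is the Fourier-support bookkeeping: identifying $\ta_{I'} + \ta_{I'} - \ta_{I'} - \ta_{I'}$ with the sheared cube $M_c\ta_{\nu^b}$ and extracting the bilinear, Kakeya-type separation $|d - d''| \le \nu^{2b}/d(I,I')$ from its intersection with $\ta_J - \ta_{J''}$; once the ultrametric algebra is pinned down, the counting and AM--GM are purely mechanical. (The case $a = 2b$ is degenerate, with $P_{\nu^{2b}}(I) = \{I\}$ and the estimate trivial, so the content lies entirely in $1 \le a \le 2b - 1$.)
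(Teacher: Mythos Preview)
Your proof is correct and is essentially the paper's second proof (Section~\ref{identity}), which likewise expands $|f_I|^2$ and uses a Fourier-support argument (there phrased via $\int g = \wh{g}(0)$ rather than Plancherel) to show the cross terms vanish unless $J,J''$ are within $\nu^{2b}(q\nu)^{-1}$ of each other. The only cosmetic difference is that the paper first reduces to $I'=I(0,\nu^b)$ by parabolic rescaling (Lemma~\ref{keystep3}) and then partitions the support of $\wh{|f_{I'}|^4}$ into $\nu^{2b}$-squares, whereas you work with general $I'$ and the sheared parallelogram $M_c\ta_{\nu^b}$ directly---a mild streamlining.
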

Indeed, Proposition \ref{keystep1} follows from Lemma \ref{keystep2} by applying the definition of $M_{2b, b}(\delta, \nu)$ to
$\int_{\F^2}|f_{J}|^{2}|f_{I'}|^{4}$ and then applying the definition of $M_{a, b}(\delta, \nu)$.
Next, by the proof of parabolic rescaling and shifting $I'$ to the origin, to prove Lemma \ref{keystep2} it suffices to instead prove the following result.
\begin{lemma}\label{keystep3}
If $1 \leq a \leq 2b$, $|I| = \nu^a$, $d(I, 0) \geq q \nu$, and $\delta \leq \nu^{2b}$ we have
\begin{align}\label{ks3}
\int_{\F^2}|f_{I}|^{2}|f_{I(0, \nu^b)}|^{4} \lsm \nu^{-O(1)}\sum_{J \in P_{\nu^{2b}}(I)}\int_{\F^2}|f_{J}|^{2}|f_{I(0, \nu^b)}|^{4}
\end{align}
for all $f$ Fourier supported in $\bigcup_{K \in P_{\delta}}\ta_{K}$.
\end{lemma}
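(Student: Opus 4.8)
The plan is to move everything to the Fourier side: after multiplying $f_I$ by $f_{I_0}^2$, where $I_0 := I(0,\nu^b)$, the contributions of the different $J \in P_{\nu^{2b}}(I)$ will live in nearly disjoint Fourier sets, and Plancherel will do the decoupling almost for free. First observe that $|f_I|^2|f_{I_0}|^4 = |f_I f_{I_0}^2|^2$, so the left side of \eqref{ks3} equals $\nms{f_I f_{I_0}^2}_{L^2(\F^2)}^2 = \nms{\wh{f_I f_{I_0}^2}}_{L^2(\F^2)}^2$ by Plancherel. Since $\delta \leq \nu^{2b} \leq \nu^b$, Corollary \ref{intintersect} forces each $\delta$-interval meeting $J$ (resp.\ $I_0$) to lie inside it, so $f_J$ is Fourier supported in $\ta_J$ and $f_{I_0}$ in $\ta_{I_0}$; writing $f_I = \sum_J f_J$ we get $f_I f_{I_0}^2 = \sum_J g_J$ with $g_J := f_J f_{I_0}^2$ and $\wh{g_J} = \wh{f_J} \ast \wh{f_{I_0}} \ast \wh{f_{I_0}}$ supported in $\ta_J + \ta_{I_0} + \ta_{I_0}$.

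The next step is to identify this support set. By Lemma \ref{curvestraight} with $a = 0$ we have $\ta_{I_0} = \nu^b\O \times \nu^{2b}\O = \ta_{\nu^b}$, which is a subgroup of $(\F^2,+)$, so $\ta_{I_0} + \ta_{I_0} = \ta_{\nu^b}$. For any $c \in J$, the same ultrametric computation used in Subsection \ref{nghbd} to show $\ta_J \subseteq \tau_J$ (using $J \subseteq \O$) gives $\ta_J \subseteq (c,c^2) + \ta_{\nu^b}$; since also $(c,c^2) \in \ta_J$, one concludes
\[
\ta_J + \ta_{I_0} + \ta_{I_0} = (c,c^2) + \ta_{\nu^b},
\]
a single coset of the subgroup $\ta_{\nu^b}$. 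Hence any two of the sets $(c,c^2)+\ta_{\nu^b}$ and $(c',c'^2)+\ta_{\nu^b}$ (with $c,c'$ centers of intervals of $P_{\nu^{2b}}(I)$) are either equal or disjoint.

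Now I would group the intervals $J \in P_{\nu^{2b}}(I)$ by the coset they produce and count collisions. Because $d(I,0) \geq q\nu > |I|$, every point of $I$ has norm $\rho := d(I,0)$, and for $c,c' \in I$ we get $|c+c'| = \rho$ (here $q$ is odd, so $|2| = 1$), hence $|c^2 - c'^2| = \rho\,|c - c'|$. Therefore $(c,c^2)$ and $(c',c'^2)$ lie in the same coset of $\ta_{\nu^b}$ precisely when $|c - c'| \leq \nu^{2b}/\rho$, so at most $\rho^{-1} \leq (q\nu)^{-1} \leq \nu^{-1}$ of the intervals $J$ fall into any one coset. Letting $G_\Omega := \sum g_J$ over those $J$ with $\ta_J + \ta_{I_0} + \ta_{I_0} = \Omega$, the functions $\wh{G_\Omega}$ have pairwise disjoint supports, so Plancherel gives $\nms{f_I f_{I_0}^2}_{L^2(\F^2)}^2 = \sum_\Omega \nms{G_\Omega}_{L^2(\F^2)}^2$; Cauchy--Schwarz within each coset (losing the factor $\rho^{-1} \lsm \nu^{-1}$) together with Plancherel once more bounds $\nms{G_\Omega}_{L^2(\F^2)}^2$ by $\nu^{-1}\sum_J \nms{g_J}_{L^2(\F^2)}^2 = \nu^{-1}\sum_J \int_{\F^2}|f_J|^2|f_{I_0}|^4$ (sum over the $J$ giving $\Omega$). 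Summing over $\Omega$ yields \eqref{ks3} with $\nu^{-O(1)} = \nu^{-1}$.

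The two support identities and the interval count are routine given the ultrametric facts of Sections \ref{qpfacts}--\ref{moregeom}, so I would keep those brief. The step that genuinely uses the hypotheses, and the one I expect to need the most care, is the collision count: it is exactly the separation $d(I,0) \geq q\nu$ that makes $|c+c'|$ as large as $\rho$, which spreads the $\eta$-heights $c^2$ of the caps apart enough that only $\rho^{-1} \lsm \nu^{-1}$ of them can coincide modulo $\ta_{\nu^b}$. So tracking that arithmetic cleanly (including the degenerate case $a = 2b$, where $P_{\nu^{2b}}(I) = \{I\}$ and \eqref{ks3} is trivial) is where I would be most careful.
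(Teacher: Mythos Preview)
Your argument is correct and is essentially the paper's third proof (Section~\ref{taopf}): write the left side as $\nms{\sum_J \wh{f_J}\ast\wh{f_{I_0}}\ast\wh{f_{I_0}}}_{L^2}^2$ via Plancherel, show the summands have nearly disjoint Fourier supports because $|c^2-c'^2| = d(I,0)\,|c-c'|$ forces $d(J,J') \leq \nu^{2b}(q\nu)^{-1}$ whenever two supports overlap, and finish with Cauchy--Schwarz. The only cosmetic difference is that you track the full two-dimensional support as a coset of the subgroup $\ta_{\nu^b}$, whereas the paper projects onto the $\eta$-axis and works with horizontal strips $S_J = \{|\eta - s_J^2| \leq \nu^{2b}\}$; both yield the same collision bound.
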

We will give five somewhat different proofs of Lemma \ref{keystep3} later in Section \ref{fiveproofs}. In some of the proofs we will prove instead the following reduction.
\begin{lemma}\label{keystep4}
If $1 \leq a \leq 2b$, $|I|= \nu^a$, $d(I, 0) \geq q \nu$, and $\delta \leq \nu^{2b}$, we have
\begin{align}\label{ks4}
\int_{B(0, \nu^{-2b})}|f_{I}|^{2}|f_{I(0, \nu^b)}|^{4} \lsm \nu^{-O(1)}\sum_{J \in P_{\nu^{2b}}(I)}\int_{B(0, \nu^{-2b})}|f_{J}|^{2}|f_{I(0, \nu^b)}|^{4}
\end{align}
for all $f$ Fourier supported in $\bigcup_{K \in P_{\delta}}\ta_{K}$.
\end{lemma}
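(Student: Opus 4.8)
The plan is to fix the ball $B := B(0,\nu^{-2b})$, abbreviate $I' := I(0,\nu^b)$, and prove \eqref{ks4} by a direct orthogonality argument on $B$. Expanding $f_I = \sum_{J \in P_{\nu^{2b}}(I)} f_J$ and writing $J = I(c_J,\nu^{2b})$ with $c_J \in J$, we get
\begin{align*}
\int_B |f_I|^2 |f_{I'}|^4 = \sum_{J \in P_{\nu^{2b}}(I)} \int_B |f_J|^2 |f_{I'}|^4 + \sum_{\substack{J_1, J_2 \in P_{\nu^{2b}}(I)\\ J_1 \neq J_2}} \int_B f_{J_1} \ov{f_{J_2}} |f_{I'}|^4,
\end{align*}
so it suffices to dominate the off-diagonal sum by $\nu^{-O(1)}\sum_J \int_B |f_J|^2 |f_{I'}|^4$. (Throughout one uses $\delta \leq \nu^{2b}$ so that the $f_J$ are genuine pieces of $f$.)

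First I would locate the Fourier support of a single off-diagonal integrand. Since $\ta_{J_i} \subset \tau_{J_i} = B((c_{J_i},c_{J_i}^2),\nu^{2b})$, since $\ta_{I'} = \nu^b\O \times \nu^{2b}\O$ by Lemma \ref{curvestraight} (base point $0 \in I'$), and since sums of intervals behave additively in the ultrametric (Lemma \ref{ballprop}(i)), the function $f_{J_1}\ov{f_{J_2}}|f_{I'}|^4$ is Fourier supported in
\begin{align*}
B\big((c_{J_1}-c_{J_2},\, c_{J_1}^2-c_{J_2}^2),\, \nu^{2b}\big) + \big(\nu^b\O \times \nu^{2b}\O\big) = \big((c_{J_1}-c_{J_2}) + \nu^b\O\big) \times \big((c_{J_1}^2-c_{J_2}^2) + \nu^{2b}\O\big).
\end{align*}
On the other hand $\int_B g = \nu^{-4b}\int_{B(0,\nu^{2b})}\wh{g}$ by Plancherel and the computation $\wh{1_B} = \nu^{-4b}1_{B(0,\nu^{2b})}$ after Proposition \ref{ballft}. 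Hence the off-diagonal integral vanishes unless that support meets $B(0,\nu^{2b})$, which by Corollary \ref{intintersect} forces both $|c_{J_1}-c_{J_2}| \leq \nu^b$ and $|c_{J_1}^2-c_{J_2}^2| \leq \nu^{2b}$.

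Next I would invoke $d(I,0) \geq q\nu$ to turn the curvature condition into a strong separation bound. Because $|I| = \nu^a \leq \nu < q\nu \leq d(I,0)$ (using $a \geq 1$), every point of $I$ has the common norm $\rho := d(I,0)$, and since $q$ is odd, $|c_{J_1}+c_{J_2}| = |2c_{J_1}-(c_{J_1}-c_{J_2})| = \rho$ as well; hence $|c_{J_1}^2-c_{J_2}^2| = |c_{J_1}-c_{J_2}|\,\rho$, and only terms with $|c_{J_1}-c_{J_2}| \leq \nu^{2b}/\rho$ survive. Thus for each fixed $J_1$ at most $\rho^{-1} \leq (q\nu)^{-1}$ choices of $J_2$ can contribute, namely the $\nu^{2b}$-subintervals of $I(c_{J_1},\nu^{2b}/\rho)$. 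Feeding this into the crude per-term bound $\big|\int_B f_{J_1}\ov{f_{J_2}}|f_{I'}|^4\big| \leq \tfrac12\int_B(|f_{J_1}|^2+|f_{J_2}|^2)|f_{I'}|^4$ and summing, each $\int_B |f_J|^2|f_{I'}|^4$ gets charged at most $\rho^{-1}$ times, so the off-diagonal sum is $\leq (q\nu)^{-1}\sum_J\int_B|f_J|^2|f_{I'}|^4$; this yields \eqref{ks4} with $O(1)=1$.

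I expect the heart of the matter to be the interplay of the last two steps. Fourier support alone cannot annihilate every off-diagonal term — for $\nu$ small some genuinely survive — so the real point is to keep the number of surviving partners per $J_1$ down to $\nu^{-O(1)}$ rather than the trivial $\#P_{\nu^{2b}}(I) = \nu^{a-2b}$, and this is precisely where the parabola's curvature is used, via $c_{J_1}^2-c_{J_2}^2 = (c_{J_1}-c_{J_2})(c_{J_1}+c_{J_2})$ together with the lower bound $|c_{J_1}+c_{J_2}| = d(I,0) \geq q\nu$ (which in turn needs $q$ odd and $I$ bounded away from the origin). Everything else — the Fourier-support bookkeeping, the reduction to $B(0,\nu^{2b})$, and the final Cauchy--Schwarz-type estimate — is routine $\F$-geometry from Section \ref{moregeom}.
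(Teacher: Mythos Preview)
Your proof is correct and follows essentially the same approach as the paper's first two proofs of Lemma~\ref{keystep3} (Sections~\ref{expandeverything} and~\ref{identity}): expand $|f_I|^2 = \sum_{J_1,J_2} f_{J_1}\ov{f_{J_2}}$, use the Fourier pairing $\int_B g = \nu^{-4b}\int_{B(0,\nu^{2b})}\wh{g}$ (equivalently the full expansion in Section~\ref{expandeverything} or the identity $\int g = \wh{g}(0)$ in Section~\ref{identity}) to force $|c_{J_1}^2 - c_{J_2}^2| \leq \nu^{2b}$, invoke $d(I,0)\geq q\nu$ to deduce $|c_{J_1}-c_{J_2}|\leq (q\nu)^{-1}\nu^{2b}$, and finish with Cauchy--Schwarz exactly as in \eqref{csarg}. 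Your treatment is slightly more streamlined than Section~\ref{identity} in that you absorb the $|f_{I'}|^4$ support directly as $\nu^b\O\times\nu^{2b}\O$ rather than partitioning into squares $\Box$, but the substance is identical.
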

To see that Lemma \ref{keystep4} implies Lemma \ref{keystep3} (and hence Proposition \ref{keystep1}), since \eqref{ks4} is true for all $f$ Fourier supported 
in $\bigcup_{K \in P_{\delta}}\ta_K$, by translation we have that
\begin{align}\label{ks5}
\int_{B}|f_{I}|^{2}|f_{I(0, \nu^b)}|^{4} \lsm \nu^{-O(1)}\sum_{J \in P_{\nu^{2b}}(I)}\int_{B}|f_{J}|^{2}|f_{I(0, \nu^b)}|^{4}
\end{align}
for all squares $B$ of side length $\nu^{-2b}$ and all $f$ Fourier supported in $\bigcup_{K \in P_{\delta}}\ta_K$. 
Since squares of side length $\nu^{-2b}$ tile $\F^2$, summing \eqref{ks5} over $B$ then gives Lemma \ref{keystep3}.

\subsection{Iteration}
Before we give its proof, 
we now show that if we had Lemma \ref{keystep3} (and hence Proposition \ref{keystep1}), then we will have shown $D(\delta)\lsm_{q, \vep} \delta^{-\vep}$
which by the interpolation from Proposition \ref{interpolate} proves Theorem \ref{main}.
Proposition \ref{keystep1} allows us to climb in the first index in $M_{a, b}(\delta, \nu)$. To move this increase to the second index, we need the following.
\begin{prop}\label{climb}
If $\delta \leq \min(\nu^a, \nu^b)$, then
$$M_{a, b}(\delta, \nu) \leq M_{b, a}(\delta, \nu)^{1/2}D(\frac{\delta}{\nu^b})^{1/2}.$$
\end{prop}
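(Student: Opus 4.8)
The plan is to deduce Proposition \ref{climb} from a single application of the Cauchy--Schwarz inequality, combined with the definition of $M_{b,a}(\delta,\nu)$ (with the roles of the two intervals interchanged) and parabolic rescaling. Fix an arbitrary Schwartz function $f$ Fourier supported in $\bigcup_{K\in P_{\delta}}\ta_K$ and intervals $I,I'\subset\O$ with $|I|=\nu^a$, $|I'|=\nu^b$ and $d(I,I')>\nu$; by the defining inequality \eqref{mabdef} of $M_{a,b}(\delta,\nu)$ it suffices to bound $\int_{\F^2}|f_I|^2|f_{I'}|^4$ by $\bigl(M_{b,a}(\delta,\nu)^{1/2}D(\delta/\nu^b)^{1/2}\bigr)^{6}$ times $(\sum_{K\in P_\delta(I)}\nms{f_K}_{L^6(\F^2)}^2)(\sum_{K'\in P_\delta(I')}\nms{f_{K'}}_{L^6(\F^2)}^2)^2$.

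The starting point is the exponent split
\[
|f_I|^2|f_{I'}|^4=\bigl(|f_I|^4|f_{I'}|^2\bigr)^{1/2}\bigl(|f_{I'}|^6\bigr)^{1/2},
\]
which by Cauchy--Schwarz gives
\[
\int_{\F^2}|f_I|^2|f_{I'}|^4\le\Bigl(\int_{\F^2}|f_I|^4|f_{I'}|^2\Bigr)^{1/2}\Bigl(\int_{\F^2}|f_{I'}|^6\Bigr)^{1/2}.
\]
The point of this particular split is that it simultaneously produces the correctly balanced $\ell^2$-sums in the final bound and correctly pairs the $\nu^b$-interval $I'$ with $D(\delta/\nu^b)$.

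For the first factor I would invoke \eqref{mabdef} for $M_{b,a}(\delta,\nu)$ with $I'$ in the role of the interval of length $\nu^b$ (appearing to the second power) and $I$ in the role of the interval of length $\nu^a$ (appearing to the fourth power); the separation $d(I',I)=d(I,I')>\nu$ and the size condition $\delta\le\min(\nu^b,\nu^a)$ are exactly the admissibility hypotheses, so
\[
\int_{\F^2}|f_I|^4|f_{I'}|^2\le M_{b,a}(\delta,\nu)^6\Bigl(\sum_{K'\in P_\delta(I')}\nms{f_{K'}}_{L^6(\F^2)}^2\Bigr)\Bigl(\sum_{K\in P_\delta(I)}\nms{f_K}_{L^6(\F^2)}^2\Bigr)^2.
\]
For the second factor, $f_{I'}$ is Fourier supported in $\bigcup_{K'\in P_\delta(I')}\ta_{K'}$ and $|I'|=\nu^b\ge\delta$, so parabolic rescaling applies and gives $\nms{f_{I'}}_{L^6(\F^2)}\le D(\delta/\nu^b)(\sum_{K'\in P_\delta(I')}\nms{f_{K'}}_{L^6(\F^2)}^2)^{1/2}$, hence
\[
\int_{\F^2}|f_{I'}|^6\le D(\delta/\nu^b)^6\Bigl(\sum_{K'\in P_\delta(I')}\nms{f_{K'}}_{L^6(\F^2)}^2\Bigr)^3.
\]

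Multiplying the last two displayed bounds and collecting exponents ($\tfrac12+\tfrac32=2$ on the $I'$-sum, $1$ on the $I$-sum) yields
\[
\int_{\F^2}|f_I|^2|f_{I'}|^4\le M_{b,a}(\delta,\nu)^3D(\delta/\nu^b)^3\Bigl(\sum_{K\in P_\delta(I)}\nms{f_K}_{L^6(\F^2)}^2\Bigr)\Bigl(\sum_{K'\in P_\delta(I')}\nms{f_{K'}}_{L^6(\F^2)}^2\Bigr)^2,
\]
and taking the supremum over admissible $f,I,I'$ and then sixth roots gives $M_{a,b}(\delta,\nu)\le M_{b,a}(\delta,\nu)^{1/2}D(\delta/\nu^b)^{1/2}$. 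I do not anticipate any real obstacle here beyond bookkeeping; the only thing that requires care is choosing the exponent split $|f_I|^2|f_{I'}|^4=(|f_I|^4|f_{I'}|^2)^{1/2}(|f_{I'}|^6)^{1/2}$ (rather than some other Hölder pairing) so that the indices match $M_{b,a}(\delta,\nu)$ and $D(\delta/\nu^b)$ exactly.
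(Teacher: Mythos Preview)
Your proof is correct and follows exactly the same approach as the paper: the paper's proof is the one-line observation that $\int f^2 g^4 \le (\int f^4 g^2)^{1/2}(\int g^6)^{1/2}$, followed by applying the definition of $M_{b,a}(\delta,\nu)$ to the first factor and parabolic rescaling to the second. You have simply written out the bookkeeping in full.
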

\begin{proof}
This follows from $\int f^{2}g^{4} \leq (\int f^{4}g^{2})^{1/2}(\int g^{6})^{1/2}$ and applying the definition of $M_{b, a}(\delta, \nu)$ to the first term
on the right hand side and parabolic rescaling to the second term on the right hand side.
\end{proof}

\begin{lemma}
If $N \in \N$ and $\delta, \nu \in q^{-\N}$ are such that $\delta \leq \nu^{2^N}$, then
\begin{align*}
M_{1, 1}(\delta, \nu) \leq C_{1}^{2}\nu^{-O(1)}D(\frac{\delta}{\nu^{2^{N - 1}}})^{\frac{1}{3 \cdot 2^N}}D(\frac{\delta}{\nu^{2^N}})^{\frac{2}{3 \cdot 2^N}}\prod_{j = 0}^{N - 1}D(\frac{\delta}{\nu^{2^j}})^{\frac{1}{2^{j + 1}}}
\end{align*}
where $C_{1}$ is the implied absolute constant from Proposition \ref{keystep1}.
\end{lemma}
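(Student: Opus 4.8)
The plan is to iterate the two pieces we have built: Proposition \ref{keystep1} (which lets us climb from a small first index $a$ to the first index $2b$, at the cost of $\nu^{-O(1)}$), and Proposition \ref{climb} (which swaps the two indices while paying one factor of $D(\delta/\nu^{b})^{1/2}$). Starting from $M_{1,1}(\delta,\nu)$, one application of Proposition \ref{keystep1} with $a = b = 1$ gives $M_{1,1} \lsm \nu^{-O(1)} M_{2,1}$. Then Proposition \ref{climb} with $(a,b) = (2,1)$ gives $M_{2,1} \le M_{1,2}^{1/2} D(\delta/\nu)^{1/2}$. Now we are at $M_{1,2}$, and since $1 \le 1 \le 2\cdot 2$, Proposition \ref{keystep1} applies again: $M_{1,2} \lsm \nu^{-O(1)} M_{4,2}$, and Proposition \ref{climb} with $(a,b)=(4,2)$ gives $M_{4,2} \le M_{2,4}^{1/2} D(\delta/\nu^{2})^{1/2}$. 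Continuing, after $j$ rounds we arrive at $M_{2^{j-1}, 2^{j}}$ raised to an exponent $2^{-j}$ coming from the accumulated square roots, having shed factors $D(\delta/\nu^{2^{i}})^{1/2}$ each raised to the cumulative exponent $2^{-i}$ for $i = 0, 1, \ldots, j-1$, together with a power of $\nu^{-O(1)}$ and a power of the constant $C_1$.

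The bookkeeping is the only real content. After $N$ rounds we should have
\begin{align*}
M_{1,1}(\delta,\nu) \le C_{1}^{?}\,\nu^{-O(1)} \Big(\prod_{j=0}^{N-1} D(\tfrac{\delta}{\nu^{2^{j}}})^{1/2^{j+1}}\Big)\, M_{2^{N-1}, 2^{N}}(\delta,\nu)^{1/2^{N}},
\end{align*}
where the hypothesis $\delta \le \nu^{2^{N}}$ is exactly what is needed so that every intermediate $M_{a,b}$ appearing is legitimately defined (we always have $\delta \le \min(\nu^{a},\nu^{b})$, since the largest index reached is $2^{N}$) and so that Proposition \ref{keystep1} is applicable at each step (its hypothesis $\delta \le \nu^{2b}$ is met since $2b \le 2^{N}$). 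To finish, I bound the final $M_{2^{N-1}, 2^{N}}$ term by the trivial bound Lemma \ref{mabtriv}: $M_{2^{N-1},2^{N}}(\delta,\nu) \le D(\delta/\nu^{2^{N-1}})^{1/3} D(\delta/\nu^{2^{N}})^{2/3}$. Raising this to the power $2^{-N}$ produces exactly the two leading factors $D(\delta/\nu^{2^{N-1}})^{1/(3\cdot 2^{N})} D(\delta/\nu^{2^{N}})^{2/(3\cdot 2^{N})}$ in the claimed inequality. The factor $C_{1}^{2}$ arises because $C_{1}$ enters with exponent $\sum_{j\ge 1} 2^{-j}\cdot(\text{something bounded})$; tracking it carefully, each of the $N$ uses of Proposition \ref{keystep1} contributes $C_1$ to a power that halves each subsequent round, and the geometric sum of these exponents is bounded by $2$, so we may absorb everything into $C_1^{2}$ (and similarly all the $\nu^{-O(1)}$ factors collapse into a single $\nu^{-O(1)}$ since $\sum_j 2^{-j} O(1) = O(1)$).

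The cleanest way to write this up is by induction on $N$: prove the base case $N=1$ directly (one application each of Proposition \ref{keystep1} and Proposition \ref{climb}, then Lemma \ref{mabtriv}), and for the inductive step, after the first round reduce $M_{1,1}(\delta,\nu)$ to a power of $M_{1,2}(\delta,\nu)$ times admissible factors, then apply the inductive hypothesis to $M_{1,2}$ — but note $M_{1,2}$ is not quite of the form $M_{1,1}$, so it is actually cleaner to set up the induction on the more general statement tracking $M_{2^{j-1},2^{j}}$ or to simply unwind the recursion explicitly for $N$ steps without dressing it as a formal induction. The main obstacle is purely the exponent accounting: making sure the powers of $D(\delta/\nu^{2^{j}})$ telescope to $2^{-(j+1)}$ and that the leftover endpoint terms from Lemma \ref{mabtriv} match the stated $\frac{1}{3\cdot 2^{N}}$ and $\frac{2}{3\cdot 2^{N}}$ exponents. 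There is no analytic difficulty here — every inequality used is already proved — so the write-up is a careful induction with attention to the constant $C_1$ and the $\nu^{-O(1)}$ collecting into the stated form.
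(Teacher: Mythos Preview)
Your proposal is correct and follows exactly the same approach as the paper: repeatedly apply Proposition~\ref{keystep1} followed by Proposition~\ref{climb} a total of $N$ times to reach $M_{2^{N-1},2^{N}}(\delta,\nu)^{1/2^{N}}$ times the product $\prod_{j=0}^{N-1}D(\delta/\nu^{2^{j}})^{1/2^{j+1}}$, then finish with Lemma~\ref{mabtriv}. The paper records this in two lines without spelling out the exponent bookkeeping for $C_{1}$ and $\nu^{-O(1)}$ that you track (the geometric sum $\sum_{j\ge 0}2^{-j}\le 2$), so your write-up is in fact more detailed than the original.
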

\begin{proof}
Repeatedly applying Proposition \ref{keystep1} followed by Proposition \ref{climb}
a total of $N$ times gives that
\begin{align*}
M_{1, 1}(\delta, \nu) \leq (C_{1}\nu^{-O(1)})^{2}M_{2^{N - 1}, 2^{N}}(\delta, \nu)^{1/2^N}\prod_{j = 0}^{N-1}D(\frac{\delta}{\nu^{2^j}})^{1/2^{j + 1}}.
\end{align*}
Applying Lemma \ref{mabtriv}, then completes the proof.
\end{proof}

Combining this with Proposition \ref{bired} gives the following.
\begin{lemma}\label{iter2}
If $N \in \N$ and $\delta, \nu \in q^{-\N}$ are such that $\delta \leq \nu^{2^N}$, then
\begin{align*}
D(\delta) \leq C_{2}D(\frac{\delta}{\nu}) + C_{2}C_{1}^{2}\nu^{-O(1)}D(\frac{\delta}{\nu^{2^{N - 1}}})^{\frac{1}{3 \cdot 2^N}}D(\frac{\delta}{\nu^{2^N}})^{\frac{2}{3 \cdot 2^N}}\prod_{j = 0}^{N - 1}D(\frac{\delta}{\nu^{2^j}})^{\frac{1}{2^{j + 1}}}
\end{align*}
where $C_{1}$ is the implied absolute constant from Proposition \ref{keystep1} and $C_{2}$ is the implied absolute
constant from Proposition \ref{bired}.
\end{lemma}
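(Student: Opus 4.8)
The plan is to deduce the statement by plugging the estimate of Lemma~\ref{iter1} into the bilinear reduction of Proposition~\ref{bired}; this is essentially a substitution, with no new analytic input needed. First I would check that the hypotheses are compatible: since $\nu \in q^{-\N}$ we have $\nu < 1$, so the assumption $\delta \leq \nu^{2^{N}}$ implies in particular $\delta \leq \nu$, which is exactly the hypothesis required to invoke Proposition~\ref{bired}, while $\delta \leq \nu^{2^{N}}$ is itself the hypothesis of Lemma~\ref{iter1} (with $N \geq 1$, so that the argument $\delta/\nu^{2^{N-1}}$ and the product over $0 \leq j \leq N-1$ make sense).

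Then I would apply Proposition~\ref{bired} to write, with $C_{2}$ the implied absolute constant there,
\begin{align*}
D(\delta) \leq C_{2} D(\tfrac{\delta}{\nu}) + C_{2}\,\nu^{-O(1)} M_{1,1}(\delta,\nu),
\end{align*}
and substitute the bound of Lemma~\ref{iter1} for the factor $M_{1,1}(\delta,\nu)$ in the second term. This contributes an extra $C_{1}^{2}\nu^{-O(1)}$ together with the product of powers of $D$, where $C_{1}$ is the implied absolute constant from Proposition~\ref{keystep1} that enters Lemma~\ref{iter1}. Absorbing the two $\nu^{-O(1)}$ factors into a single $\nu^{-O(1)}$ then yields exactly
\begin{align*}
D(\delta) \leq C_{2} D(\tfrac{\delta}{\nu}) + C_{2} C_{1}^{2}\,\nu^{-O(1)} D(\tfrac{\delta}{\nu^{2^{N-1}}})^{\frac{1}{3\cdot 2^{N}}} D(\tfrac{\delta}{\nu^{2^{N}}})^{\frac{2}{3\cdot 2^{N}}} \prod_{j=0}^{N-1} D(\tfrac{\delta}{\nu^{2^{j}}})^{\frac{1}{2^{j+1}}},
\end{align*}
which is the claimed inequality.

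The only point requiring care — and the closest thing to an ``obstacle'' here — is bookkeeping: one must combine the powers of $\nu$ arising from Proposition~\ref{bired} and from Lemma~\ref{iter1} into one $\nu^{-O(1)}$, and track the absolute constants so that the displayed inequality holds literally, with the specified roles of $C_{1}$ and $C_{2}$. Beyond verifying the compatibility of hypotheses and this accounting, there is no further content: the lemma is the direct chaining of the bilinear reduction with the iterated bilinear estimate already established.
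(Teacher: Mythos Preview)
Your proposal is correct and matches the paper's approach exactly: the lemma is obtained simply by combining the bilinear reduction of Proposition~\ref{bired} with the bound on $M_{1,1}(\delta,\nu)$ from Lemma~\ref{iter1}, absorbing the two $\nu^{-O(1)}$ factors into one. Your check that $\delta \leq \nu^{2^N}$ implies $\delta \leq \nu$ (so that Proposition~\ref{bired} applies) is the only thing to verify, and you have done so.
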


We now just need to iterate Lemma \ref{iter2}. To avoid having to deal with the constraints
that $\delta, \nu \in q^{-\N}$, it will be more convenient to bound
\begin{align}\label{frakdec}
\mf{D}(\delta) := \sup_{\delta_0 \in q^{-\N} \cap [\delta, 1]}D(\delta_0)
\end{align}
instead of $D(\delta)$ as $\mf{D}(\delta)$ is defined for all real $\delta \in (0, 1]$ (rather than just for $\delta \in q^{-\N}$)
and is monotonic, that is, $\mf{D}(\delta_L) \leq \mf{D}(\delta_S)$ if $\delta_L \geq \delta_S$.
(A warning: The notation in \eqref{frakdec} is reverse from what it is in \cite[Section 5]{CHLMRY}.)
Trivially we also have that if $\delta \in q^{-\N}$, then $D(\delta) \leq \mf{D}(\delta)$.
Thus to show that $D(\delta) \lsm_{q, \vep} \delta^{-\vep}$ for all $\delta \in q^{-\N}$, it suffices
to show that $\mf{D}(\delta) \lsm_{q, \vep} \delta^{-\vep}$ for all $\delta \in (0, 1]$.

\begin{prop}\label{iter3}
If $N \in \N$ and $\delta \in (0, 1]$, then
\begin{align*}
\mf{D}(\delta) \leq (q^{2^{N-1}} + C_2) &\mf{D}(q^{-1}\delta^{1 - \frac{1}{2^{N}}})\\
& + (q^{2^{N-1}} + C_{2}C_{1}^{2}q^{1/3})\delta^{-\frac{C_3}{2^N}}\mf{D}(q^{-2^{N - 1}}\delta^{1/2})^{\frac{1}{3 \cdot 2^N}}\prod_{j = 0}^{N - 1}\mf{D}(q^{-2^j}\delta^{1 - \frac{1}{2^{N - j}}})^{\frac{1}{2^{j + 1}}}
\end{align*}
where $C_{1}$ is the implied absolute constant from Proposition \ref{keystep1}, $C_{2}$ is the implied absolute constant from Proposition \ref{bired},
and $C_3$ is the implied absolute constant from the $\nu^{-O(1)}$ in Lemma \ref{iter2}.
\end{prop}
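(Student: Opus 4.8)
The strategy is to peel off the definition of $\mf{D}$ and reduce to a single application of Lemma~\ref{iter2} at a well-chosen intermediate scale. Since the right-hand side of Proposition~\ref{iter3} does not depend on the scale at which $D$ is evaluated, it suffices to bound $D(\delta_0)$ by that right-hand side for an arbitrary fixed $\delta_0 \in q^{-\N} \cap [\delta, 1]$ and then take the supremum over such $\delta_0$. Write $\delta_0 = q^{-m}$ with $m \in \N_0$ and let $k := \flr{m/2^N}$. The plan is to invoke Lemma~\ref{iter2} with $\nu = \nu_0 := q^{-k}$, the best approximation of $\delta_0^{1/2^N}$ available in $q^{-\N}$; this is admissible only when $k \geq 1$, i.e. $m \geq 2^N$, so the range $\delta_0 > q^{-2^N}$ must be handled separately. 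For $m \leq 2^N - 1$, the triangle inequality followed by Cauchy--Schwarz over the $\delta_0^{-1}$ intervals of $P_{\delta_0}$ gives the trivial bound $D(\delta_0) = D_6(\delta_0) \leq \delta_0^{-1/2} = q^{m/2} \leq q^{2^{N-1}}$; since $D \geq 1$ by Proposition~\ref{lowerbound} and hence $\mf{D} \geq 1$, this is $\leq q^{2^{N-1}} \mf{D}(q^{-1}\delta^{1-\frac{1}{2^N}})$, which is dominated by the first term of the claimed inequality. (This is where the $q^{2^{N-1}}$ in the coefficients comes from.)

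Now assume $m \geq 2^N$ and set $\nu_0 = q^{-k} \in q^{-\N}$. From $m/2^N - 1 < k \leq m/2^N$ one reads off the two-sided bound $\delta_0^{1/2^N} \leq \nu_0 < q\,\delta_0^{1/2^N}$, so in particular $\delta_0 \leq \nu_0^{2^N}$ and Lemma~\ref{iter2} applies:
\begin{align*}
D(\delta_0) \leq C_2 D(\delta_0\nu_0^{-1}) + C_2 C_1^2\, \nu_0^{-O(1)}\, D(\delta_0\nu_0^{-2^{N-1}})^{\frac{1}{3 \cdot 2^N}} D(\delta_0\nu_0^{-2^N})^{\frac{2}{3 \cdot 2^N}} \prod_{j = 0}^{N-1} D(\delta_0\nu_0^{-2^j})^{\frac{1}{2^{j+1}}}.
\end{align*}
Feeding the two-sided bound on $\nu_0$ into each argument gives $q^{-2^j}\delta_0^{1-\frac{1}{2^{N-j}}} < \delta_0\nu_0^{-2^j} \leq \delta_0^{1-\frac{1}{2^{N-j}}} \leq 1$ for $0 \leq j \leq N$; combining with $\delta_0 \geq \delta$, the nonnegativity of $1-\frac{1}{2^{N-j}}$, the bound $D(\cdot) \leq \mf{D}(\cdot)$ on $q^{-\N}$, and the monotonicity of $\mf{D}$, this yields $D(\delta_0\nu_0^{-2^j}) \leq \mf{D}(q^{-2^j}\delta^{1-\frac{1}{2^{N-j}}})$ for $0 \leq j \leq N-1$ — which is exactly the product in Proposition~\ref{iter3}, and at $j = N-1$ the factor $\mf{D}(q^{-2^{N-1}}\delta^{1/2})^{1/(3\cdot2^N)}$; likewise $D(\delta_0\nu_0^{-1}) \leq \mf{D}(q^{-1}\delta^{1-\frac{1}{2^N}})$. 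For the $\nu_0^{-O(1)}$ factor, $\nu_0 \geq \delta_0^{1/2^N} \geq \delta^{1/2^N}$ gives $\nu_0^{-O(1)} \leq \nu_0^{-C_3} \leq \delta^{-C_3/2^N}$ with $C_3$ the implied constant. The only factor that behaves differently is $D(\delta_0\nu_0^{-2^N})$: here $\delta_0\nu_0^{-2^N} = q^{-(m-2^Nk)}$ with $m - 2^Nk \in \{0,1,\ldots,2^N-1\}$, so essentially no decoupling remains and one must fall back on the trivial bound $D(\delta_0\nu_0^{-2^N}) \leq (\delta_0\nu_0^{-2^N})^{-1/2} \leq q^{2^{N-1}}$, whence $D(\delta_0\nu_0^{-2^N})^{2/(3\cdot2^N)} \leq q^{1/3}$ — the source of the $q^{1/3}$ in the second coefficient. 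Substituting these estimates gives
\begin{align*}
D(\delta_0) \leq C_2\, \mf{D}(q^{-1}\delta^{1-\frac{1}{2^N}}) + C_2 C_1^2 q^{1/3}\, \delta^{-\frac{C_3}{2^N}}\, \mf{D}(q^{-2^{N-1}}\delta^{1/2})^{\frac{1}{3\cdot2^N}} \prod_{j=0}^{N-1} \mf{D}(q^{-2^j}\delta^{1-\frac{1}{2^{N-j}}})^{\frac{1}{2^{j+1}}},
\end{align*}
which is bounded termwise by the right-hand side of Proposition~\ref{iter3}. Taking the supremum over $\delta_0 \in q^{-\N} \cap [\delta, 1]$ finishes the argument.

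I expect the main obstacle to be bookkeeping rather than conceptual: one must choose $\nu_0$ as the correct power of $q$ and then propagate the estimate $\delta_0^{1/2^N} \leq \nu_0 < q\delta_0^{1/2^N}$ through every argument of $D$ so that it produces precisely the factors $q^{-1}, q^{-2^j}, q^{-2^{N-1}}$ and the power $\delta^{-C_3/2^N}$ appearing in the statement. The one point requiring a small idea is that the finest-scale factor $D(\delta_0\nu_0^{-2^N})$ should \emph{not} be recursed on but simply discarded trivially at cost $q^{1/3}$ — this is what keeps the right-hand side of Proposition~\ref{iter3} supported on genuinely smaller scales, and is what makes the iteration self-improving at the next stage. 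Disposing of the vacuous range $\delta_0 > q^{-2^N}$, where Lemma~\ref{iter2} has no content, is a minor but necessary extra step.
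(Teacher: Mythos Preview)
Your proof is correct and follows essentially the same approach as the paper: you choose $\nu_0 = q^{-\lfloor m/2^N\rfloor}$ (which is exactly the paper's $\nu = q^{\lceil \log_q \delta_0^{1/2^N}\rceil}$ rewritten), apply Lemma~\ref{iter2}, replace each $D(\delta_0\nu_0^{-2^j})$ by $\mf{D}(q^{-2^j}\delta^{1-1/2^{N-j}})$ via monotonicity, discard the $j=N$ factor trivially at cost $q^{1/3}$, and handle the vacuous range $\delta_0>q^{-2^N}$ by the trivial bound $D(\delta_0)\le q^{2^{N-1}}$. The only cosmetic difference is that the paper routes the $j=N$ factor through $\mf{D}(q^{-2^N})\le q^{2^{N-1}}$ rather than bounding $D(\delta_0\nu_0^{-2^N})$ directly, which yields the same $q^{1/3}$.
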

\begin{proof}
Fix $N \in \N$ and $\delta_0 \in q^{-\N} \cap [\delta, 1]$.
Suppose first that $\delta_0 \leq q^{-2^N}$. 
Let $\nu := q^{\lceil \log_{q}\delta_{0}^{1/2^N}\rceil} \geq \delta_0^{1/2^N} \geq \delta^{1/2^N}$.
Since $\delta_0 \leq q^{-2^N}$, $\nu \in q^{-\N}$.
Note that for $j = 0, 1, 2, \ldots, N$,
\begin{align*}
\frac{\delta_0}{\nu^{2^j}} = q^{\log_{q}\delta_0 - \lceil \log_{q}\delta_{0}^{1/2^N}\rceil 2^{j}} \geq \delta_{0}^{1 - \frac{1}{2^{N - j}}}q^{-2^j} \geq \delta^{1 - \frac{1}{2^{N - j}}}q^{-2^j}
\end{align*}
and hence $$D(\frac{\delta_0}{\nu^{2^j}}) \leq \mf{D}(q^{-2^j}\delta^{1 - \frac{1}{2^{N - j}}}).$$
Thus applying Lemma \ref{iter2} gives that
\begin{align*}
D(\delta_0) &\leq C_{2}D(\frac{\delta_0}{\nu}) + C_{2}C_{1}^{2}\nu^{-C_3}D(\frac{\delta_0}{\nu^{2^{N - 1}}})^{\frac{1}{3 \cdot 2^N}}D(\frac{\delta_0}{\nu^{2^N}})^{\frac{2}{3 \cdot 2^N}}\prod_{j = 0}^{N - 1}D(\frac{\delta_0}{\nu^{2^j}})^{\frac{1}{2^{j + 1}}}\\
&\leq C_2 \mf{D}(q^{-1}\delta^{1 - \frac{1}{2^{N}}}) + C_{2}C_{1}^{2}\delta^{-\frac{C_3}{2^N}}\mf{D}(q^{-2^{N - 1}}\delta^{1/2})^{\frac{1}{3 \cdot 2^N}}\mf{D}(q^{-2^N})^{\frac{2}{3 \cdot 2^N}}\prod_{j = 0}^{N - 1}\mf{D}(q^{-2^j}\delta^{1 - \frac{1}{2^{N - j}}})^{\frac{1}{2^{j + 1}}}.
\end{align*}
Applying the trivial bound that $\mf{D}(q^{-2^N}) \leq q^{2^{N}/2}$ then shows that
\begin{align*}
D(\delta_0) \leq C_2 \mf{D}(q^{-1}\delta^{1 - \frac{1}{2^{N}}}) + C_{2}C_{1}^{2}q^{1/3}\delta^{-\frac{C_3}{2^N}}\mf{D}(q^{-2^{N - 1}}\delta^{1/2})^{\frac{1}{3 \cdot 2^N}}\prod_{j = 0}^{N - 1}\mf{D}(q^{-2^j}\delta^{1 - \frac{1}{2^{N - j}}})^{\frac{1}{2^{j + 1}}}
\end{align*}
as long as $\delta_0 \leq q^{-2^N}$.
On the other hand if $\delta_0 > q^{-2^N}$, we apply the trivial bound and obtain that 
$D(\delta_0) \leq \delta_{0}^{-1/2} \leq q^{2^N/2}$.
Since $\mf{D}(\delta) \geq 1$ by Proposition \ref{lowerbound},
combining the bounds obtained in the two regimes above and then taking the supremum over all $\delta_0 \in q^{-\N} \cap [\delta, 1]$,
the completes the proof of the lemma.
\end{proof}

We now are ready to show that $\mf{D}(\delta) \lsm_{q, \vep} \delta^{-\vep}$.
Let $\ld \geq 0$ be the smallest constant such that
$\mf{D}(\delta) \lsm_{q, \vep} \delta^{-\ld - \vep}$
for all $\delta \in (0, 1]$. From the trivial bound on $D(\delta)$,
$\ld \leq 1/2$. If $\ld = 0$, then we are done and so we assume that $\ld > 0$.
Choose $N$ such that
$$\frac{5}{6} + \frac{N}{2} - \frac{C_3}{\ld} \geq 1.$$
Then Proposition \ref{iter3} implies that for all $\delta \in (0, 1]$, we have
\begin{align*}
\mf{D}(\delta) &\lsm_{q, N, \vep} \delta^{-\ld(1 - \frac{1}{2^N}) - \vep} + \delta^{-\frac{C_3}{2^N} - \frac{\ld}{6 \cdot 2^N} - \sum_{j = 0}^{N - 1}(1 - \frac{1}{2^{N - j}})\frac{1}{2^{j + 1}}\ld - \vep}\\
&\leq \delta^{-\ld(1 - \frac{1}{2^N}) - \vep} + \delta^{-\ld(1 - \frac{1}{2^N}(\frac{5}{6} + \frac{N}{2} - \frac{C_3}{\ld})) - \vep} \leq \delta^{-\ld(1 - \frac{1}{2^N}) - \vep}
\end{align*}
where the last inequality is by our choice of $N$.
But this contradicts the minimality of $\ld$. Therefore we must have $\ld = 0$
which finishes the proof.

\section{Five proofs of the key step}\label{fiveproofs}
In this section we give five different proofs of Lemma \ref{keystep3} which in turn implies Proposition \ref{keystep1}.
Before we begin our proofs, we first discuss how the efficient congruencing proof in the quadratic Vinogradov case found in
\cite[Section 4.3]{Pierce} motivated the definition of $M_{a, b}(\delta, \nu)$ and Proposition \ref{keystep1}.

\subsection{Motivation}\label{motivation}
The inspiration for the definition of $M_{a, b}(\delta, \nu)$ and Proposition \ref{keystep1}
came from \cite[Section 4.3]{Pierce} which is the quadratic Vinogradov version of Heath-Brown's simplification \cite{HB-Cubic}
of Wooley's proof \cite{W16} of cubic Vinogradov. See \cite{Pierce} and \cite{WooleyICM} for more details
on Vinogradov's Mean Value Theorem. 

The main bilinear object considered in \cite[Section 4.3]{Pierce} is
\begin{align*}
I_{1}(X; a, b) := \max_{\xi \not\equiv \eta \Mod{p}}\int_{[0, 1]^2}|\sum_{\st{n \leq X\\n \equiv \xi \Mod{p^a}}}e(\alpha n + \beta n^2)|^{2}|\sum_{\st{n \leq X\\n \equiv \eta \Mod{p^b}}}e(\alpha n + \beta n^2)|^{4}\, d\alpha\, d\beta.
\end{align*}
For fixed residue classes $\xi \Mod{p^a}$ and $\eta \Mod{p^b}$ with $\xi \not\equiv \eta \Mod{p}$,
the integral counts the number of solutions to 
\begin{align}\label{quadvmvt}
\begin{aligned}
x_{1} + x_{2} + x_{3} &= x_{4} + x_{5} + x_{6}\\
x_{1}^{2} + x_{2}^{2} + x_{3}^{2} &= x_{4}^{2} + x_{5}^{2} + x_{6}^{2}
\end{aligned}
\end{align}
with $1 \leq x_i \leq X$, $x_{1} \equiv x_{4} \equiv \xi \Mod{p^a}$, the other
$x_{i} \equiv \eta \Mod{p^b}$, and $\xi\not\equiv \eta \Mod{p}$.
The key estimate in \cite[Section 4.3]{Pierce} with regards to $I_{1}(X; a, b)$ is 
that if $1 \leq a \leq 2b$ and $p^{2b} \leq X$, then
\begin{align}\label{iab}
I_{1}(X; a, b) \leq p^{2b - a}I_{1}(X; 2b, b).
\end{align}
This should be immediately compared to Proposition \ref{keystep1}.
This loss of $p^{2b - a}$ is sharp. To see this, heuristically 
we expect $I_{1}(X; a, b) \approx X^{3}/p^{a + 2b}$ (here we use $\approx$ somewhat informally). 
Thus if we define $I_{1}'(X; a, b) := (p^{a + 2b}/X^3)I_{1}(X; a, b)$, 
\eqref{iab} now reads $I'_{1}(X; a, b) \leq I'_{1}(X; 2b, b)$
and we expect both sides to be $\approx 1$. Since we will want to prove $D(\delta) \approx 1$ (and hence $M_{a, b}(\delta, \nu) \approx 1$), we should expect $I'_{1}(X; a, b)$ to be similar to $M_{a, b}(\delta, \nu)$.

We now turn to the proof of \eqref{iab} from \cite[Lemma 4.4]{Pierce} as this will motivate our proofs of Lemma \ref{keystep3}.
Since solutions to \eqref{quadvmvt} are translation invariant, by subtracting $\eta$, we may assume that $\eta = 0$
and so $\xi\not\equiv 0 \Mod{p}$.
Write $x_{i} = \xi + p^{a}y_i$ for $i = 1, 4$ and $x_{i} = p^{b}y_i$
for $i = 2, 3, 5, 6$.
Since $x_{1}^{2}+ x_{2}^{2} + x_{3}^{2} = x_{4}^{2} + x_{5}^{2} + x_{6}^{2}$ we must have
\begin{align}\label{twosides}
(\xi + p^{a}y_1)^{2} \equiv (\xi + p^{a}y_{4})^{2} \Mod{p^{2b}}.
\end{align}
Then
$$p^{a}(y_1 - y_4)(2\xi + p^{a}(y_1 + y_4)) \equiv 0 \Mod{p^{2b}}.$$
Since $\xi \not\equiv 0 \Mod{p}$, we have $2\xi + p^{a}(y_1 + y_2) \not\equiv 0\Mod{p}$
and hence we obtain an extra congruence $y_1 \equiv y_4 \Mod{p^{2b- a}}$. This will allow us to count the solutions
efficiently. Fix $y_4 \Mod{p^{2b -a}}$. There are $p^{2b - a}$ many choices and this gives the $p^{2b - a}$
on the right hand side of \eqref{iab}. This choice then fixes $y_1$ mod $p^{2b - a}$ and hence
$x_1 \equiv x_4 \equiv \xi' \Mod{p^{2b}}$ for some $\xi'$.
Now we apply the definition of $I_{1}(X; 2b, b)$.

Broadly speaking the steps of this proof are
\begin{enumerate}
\item[1.] Use translation invariance to shift 4 of the variables to be $0 \pmod{p^b}$.
\item[2.] Since $x_{2}, x_3, x_5, x_6 \equiv 0 \pmod{p^b}$, we look at the quadratic
equation mod $p^{2b}$.
\item[3.]\label{step3} Find diagonal behavior in the remaining $x_1$ and $x_4$ variables at scale mod $p^{2b}$.
\end{enumerate}
The analogue of translation invariance in decoupling is parabolic rescaling and so we have implemented
Step 1 already in Lemma \ref{keystep3}. Another way to view Lemma \ref{keystep3} is that we decouple
the two copies of $f_{I}$ into $f_{J}$ which are at scale $\nu^{2b}$ all the while
keeping the four copies of $f_{I(0, \nu^b)}$ on both sides. This corresponds to upgrading $x_1$ and $x_4$ from mod $p^{a}$
knowledge 
to mod $p^{2b}$ knowledge while gaining no new knowledge in the other remaining variables.

We now give our five proofs of Lemma \ref{keystep3}.
On a first read and to more easily compare
the argument with the efficient congruencing argument in Section
\ref{motivation}, it may be useful to set $\nu = 1/q$.

\subsection{A first proof of Lemma \ref{keystep3}}\label{expandeverything}
Our first proof will take advantage of the fact that both exponents on the left hand side of \eqref{ks3} 
are even and resembles a C\'ordoba-Fefferman argument
(see for example \cite[Section 3.2]{Demeter-book}). This proof is a modification
of an argument that first appeared in \cite[Lemma 2.7]{Li18}.
To prove Lemma \ref{keystep3}, we prove Lemma \ref{keystep4} instead.

By definition, the left hand side of \eqref{ks4}
is equal to 
\begin{align}\label{expand}
\int_{B(0, \nu^{-2b})}\sum_{J, J' \in P_{\nu^{2b}}(I)}f_{J}\ov{f_{J'}}|f_{I(0, \nu^b)}|^{4}.
\end{align}
As $(L \times \F) \cap \bigcup_{K \in P_{\delta}}\ta_{K} = \{(\xi, \eta) \in \F^2: \xi \in L, |\eta - \xi^2| \leq \delta^2\}$, the definition of $f_{L}$ then gives
$$f_{L}(x) = \int_{|t| \leq \delta^{2}}\int_{L}\wh{f}(s, s^{2} + t)\chi(sx_1 + s^{2}x_2)\chi(tx_2)\, ds\, dt$$
for any interval $L \subset \O$.
Expanding then gives that \eqref{expand} is
\begin{align*}
\sum_{J, J' \in P_{\nu^{2b}}(I)}\int_{|t_1| \leq \delta^{2}, \ldots, |t_{6}| \leq \delta^{2}}\int_{J \times I(0, \nu^b)^{2} \times J' \times I(0, \nu^b)^{2}}(\cdots)_{1}\int_{B(0, \nu^{-2b})}\chi((\cdots)_{2})\, dx\, ds_{1} \cdots ds_{6}\, dt_{1} \cdots dt_{6}
\end{align*}
where $s_{1} \in J$, $s_{4} \in J'$, $s_{2}, s_{3}, s_{5}, s_{6} \in I(0, \nu^b)$,
\begin{align*}
(\cdots)_{1} = \prod_{i = 1}^{3}\wh{f}(s_{i}, s_{i}^{2} + t_{i})\ov{\wh{f}(s_{i + 3}, s_{i + 3}^{2} + t_{i + 3})},
\end{align*}
and
\begin{align*}
(\cdots)_{2} = \sum_{i = 1}^{3}(s_{i} - s_{i + 3})x_1 + (s_{i}^{2} - s_{i + 3}^{2})x_2 + \sum_{i = 1}^{3}(t_{i} - t_{i + 3})x_2.
\end{align*}
Since
$|t_{i}| \leq \delta^{2}$, $|x_2| \leq \nu^{-2b}$, and $\delta \leq \nu^{2b}$,
$|\sum_{i = 1}^{3}(t_{i} - t_{i + 3})x_2| \leq 1$
and so $\chi(\sum_{i = 1}^{3}(t_{i} - t_{i + 3})x_2) = 1$.
Therefore
\begin{align*}
\begin{aligned}
\int_{B(0, \nu^{-2b})}\chi((\cdots)_{2})\, dx &= \int_{B(0, \nu^{-2b})}\chi(\sum_{i = 1}^{3}(s_{i} - s_{i + 3})x_1 + (s_{i}^{2} - s_{i + 3}^{2})x_2)\, dx\\
&= \nu^{-4b}1_{|\sum_{i = 1}^{3}s_{i} - s_{i + 3}| \leq \nu^{2b}, |\sum_{i = 1}^{3}s_{i}^{2} - s_{i + 3}^{2}| \leq \nu^{2b}}
\end{aligned}
\end{align*}
by Proposition \ref{ballft}.
Since $\delta \leq \nu^{2b}$ and $s_{2}, s_{3}, s_{5}, s_{6} \in I(0, \nu^b)$, 
if we have $|s_{1}^{2} + s_{2}^{2} + s_{3}^{2} - s_{4}^{2} - s_{5}^{2} - s_{6}^2| \leq \nu^{2b}$,
then
\begin{align}\label{s1s4}
|s_{1} + s_{4}||s_{1} - s_{4}| \leq \nu^{2b}.
\end{align}
Notice that in this step, if we had set $\nu = 1/q$, then this is exactly \eqref{twosides}.
Recall that $s_{1} \in J \subset I$, $s_{4} \in J' \subset I$, and let $d(I, 0) \geq q\nu$. For any $r \in I$, write $s_{1} = r + s_{1}'$ 
and $s_{4} = r + s_{4}'$ for some $|s_{1}'|, |s_{4}'| \leq \nu^a$. By the ultrametic triangle inequality, $|s_{1} + s_{4}| = |2r + (s_{1}' + s_{4}')| \geq q\nu$
since $|2r| \geq q\nu > \nu^a \geq |s_{1}' + s_{4}'|$.
Inserting this into \eqref{s1s4}
shows that we have $|s_{1} - s_{4}| \leq (q\nu)^{-1}\nu^{2b}$ (corresponding to Step 3 in the efficient congruencing argument on Page \pageref{step3}) and hence we can insert the requirement that
$d(J, J') \leq (q\nu)^{-1}\nu^{2b}$ into $\sum_{J, J'}$ in \eqref{expand}. Applying two instances of the Cauchy-Schwarz inequality
gives that
\begin{align}\label{csarg}
\int_{B(0, \nu^{-2b})}\sum_{\st{J, J' \in P_{\nu^{2b}}(I)\\d(J, J') \leq (q\nu)^{-1}\nu^{2b}}}f_{J}\ov{f_{J'}}|f_{I(0, \nu^b)}|^{4} \lsm \nu^{-O(1)}\sum_{J \in P_{\nu^{2b}}(I)}\int_{B(0, \nu^{-2b})}|f_{J}|^{2}|f_{I(0, \nu^b)}|^{4}
\end{align}
which completes the proof.

\subsection{A second proof of Lemma \ref{keystep3}}\label{identity}
Another way to interpret solution counting and the argument in Section \ref{motivation}
is to make use of the identity $\int f(x)\, dx = \wh{f}(0)$ and ask under what conditions does
0 lie in the Fourier support of $f$.
This was the point of view taken in \cite{CHLMRY}.
Like the proof in Section \ref{expandeverything}, this second proof also makes
use of the fact that $2$ and $4$ are even, however it takes more into account
some geometric properties.

Like in \eqref{expand},
the left hand side of \eqref{ks3} is equal to
\begin{align}\label{pf2eq1}
\sum_{J, J' \in P_{\nu^{2b}}(I)}\int_{\F^2}f_{J}\ov{f_{J'}}|f_{I(0, \nu^b)}|^{4} = \sum_{J, J' \in P_{\nu^{2b}}(I)}(\wh{f_{J}} \ast \wh{\ov{f_{J'}}} \ast \wh{f_{I(0, \nu^b)}} \ast \wh{f_{I(0, \nu^b)}} \ast \wh{\ov{f_{I(0, \nu^b)}}} \ast \wh{\ov{f_{I(0, \nu^b)}}})(0).
\end{align}
Observe that
$\wh{f_{I(0, \nu^b)}}$ and $\wh{\ov{f_{I(0, \nu^b)}}}$ are supported
in $\bigcup_{K \in P_{\delta}(I(0, \nu^b))}\ta_K \subset \{(\xi, \eta): |\xi| \leq \nu^{b}, |\eta| \leq \nu^{2b}\}$. Partition
this rectangle into squares $\{\Box\}$ of side length $\nu^{2b}$. Therefore
\eqref{pf2eq1} is equal to
\begin{align}\label{pf2eq2}
\sum_{\Box}\sum_{J, J' \in P_{\nu^{2b}}(I)}(\wh{f_{J}} \ast \wh{\ov{f_{J'}}} \ast (\wh{f_{I(0, \nu^b)}} \ast \wh{f_{I(0, \nu^b)}} \ast \wh{\ov{f_{I(0, \nu^b)}}} \ast \wh{\ov{f_{I(0, \nu^b)}}})1_{\Box})(0).
\end{align}
Given a $\Box$, 
we now need to decide for which $J, J' \in P_{\nu^{2b}}(I)$ is $0$ contained
in the support of $\wh{f_{J}} \ast \wh{\ov{f_{J'}}} \ast (\wh{f_{I(0, \nu^b)}} \ast \wh{f_{I(0, \nu^b)}} \ast \wh{\ov{f_{I(0, \nu^b)}}} \ast \wh{\ov{f_{I(0, \nu^b)}}})1_{\Box}$.
Since $f_{J}$ is Fourier supported in $\bigcup_{K \in P_{\delta}(J)}\ta_K \subset \ta_J$ and similarly for $f_{J'}$, 
it suffices to ask for which $J, J' \in P_{\nu^{2b}}(I)$ is $0 \in \ta_{J} - \ta_{J'} + \Box$.
In fact, it suffices to ask something weaker. Recall the definition of $\tau_{J}$ in Section \ref{nghbd}. 
As $\ta_{J} \subset \tau_{J}$, 
we will ask for which $J, J' \in P_{\nu^{2b}}(I)$ is $0 \in \tau_{J} - \tau_{J'} + \Box$.
Since $|J| = |J'| = \nu^{2b}$, $\tau_{J}$ and $\tau_{J'}$ are squares
of side length $\nu^{2b}$. Then $\tau_{J} - \tau_{J'} + \Box$ is a square of side length $\nu^{2b}$
which contains 0. Therefore our question reduces to asking for which
$J, J' \in P_{\nu^{2b}}(I)$ is such that
\begin{align}\label{jjnu2b}
\tau_{J} - \tau_{J'} + \Box = B(0, \nu^{2b}).
\end{align}
If \eqref{jjnu2b} holds, then there exists an $s_{J} \in J$,
$s_{J'} \in J'$ and $(b_{1}, b_{2}) \in \Box$ such that
\begin{align*}
|s_{J} - s_{J'} + b_{1}| &\leq \nu^{2b}\\
|s_{J}^{2} - s_{J'}^{2} + b_{2}| &\leq \nu^{2b}.
\end{align*}
We will only be interested in the second inequality.
Since $\Box \subset \{(\xi, \eta): |\xi| \leq \nu^b, |\eta| \leq \nu^{2b}$,
$|b_{2}| \leq \nu^{2b}$ and hence $|s_{J}^{2} - s_{J'}^{2}| \leq \nu^{2b}$.
Using that $s_{J} \in J \subset I$, $s_{J'} \subset J' \subset I$, $d(I, 0) \geq q\nu$
and the same reasoning as at the end of the proof of the previous section,
we see $|s_{J} + s_{J'}| \geq q\nu$ and hence
$|s_{J} - s_{J'}| \leq \nu^{2b}(q\nu)^{-1}$.
But then by the ultrametric triangle inequality, $|s - s'| \leq \nu^{2b}(q\nu)^{-1}$
for all $s \in J$ and $s' \in J'$.

Therefore we have shown that given a $\Box$, in order for $0$ to lie
in the support of $\wh{f_{J}} \ast \wh{\ov{f_{J'}}} \ast (\wh{f_{I(0, \nu^b)}} \ast \wh{f_{I(0, \nu^b)}} \ast \wh{\ov{f_{I(0, \nu^b)}}} \ast \wh{\ov{f_{I(0, \nu^b)}}})1_{\Box}$,
we cannot have that $d(J, J') > \nu^{2b}(q\nu)^{-1}$. Thus \eqref{pf2eq2} is equal to
\begin{align*}
\sum_{\Box}\sum_{\st{J, J' \in P_{\nu^{2b}}(I)\\d(J, J') \leq \nu^{2b}(q\nu)^{-1}}}\wh{f_{J}} \ast \wh{\ov{f_{J'}}} \ast (\wh{f_{I(0, \nu^b)}} \ast \wh{f_{I(0, \nu^b)}} \ast \wh{\ov{f_{I(0, \nu^b)}}} \ast \wh{\ov{f_{I(0, \nu^b)}}})1_{\Box}.
\end{align*}
Again using that $\wh{f_{I(0, \nu^b)}} \ast \wh{f_{I(0, \nu^b)}} \ast \wh{\ov{f_{I(0, \nu^b)}}} \ast \wh{\ov{f_{I(0, \nu^b)}}}$ is supported on the $\{\Box\}$ and then finally applying $\int f(x)\, dx = \wh{f}(0)$, shows that \eqref{pf2eq1}
is equal to 
\begin{align*}
\int_{\F^2}\sum_{\st{J, J' \in P_{\nu^{2b}}(I)\\d(J, J') \leq \nu^{2b}(q\nu)^{-1}}}f_{J}\ov{f_{J'}}|f_{I(0, \nu^b)}|^{4}
\end{align*}
which by the same application of the Cauchy-Schwarz inequality as in \eqref{csarg} completes the proof.

\subsection{A third proof of Lemma \ref{keystep3}}\label{taopf}
The third proof of Lemma \ref{keystep3} relies importantly on that one of the exponents is a 2 and so suggests
us to apply Plancherel. This argument first appeared in \cite[Proposition 19]{Tao247B} and is similar to the argument in
\cite{GLYZK} which we will present in the next section. An argument similar to this one was used to prove decoupling
for fractal sets on the parabola in \cite{CDGJLM}.
By Plancherel, the left hand side of \eqref{ks3} is
\begin{align}\label{pf3eq1}
\int_{\F^2}|\sum_{J \in P_{\nu^{2b}}(I)}f_{J}f_{I(0, \nu^b)}^{2}|^{2} = \int_{\F^2}|\sum_{J \in P_{\nu^{2b}}(I)}\wh{f_{J}} \ast \wh{f_{I(0, \nu^b)}} \ast \wh{f_{I(0,\nu^b)}}|^{2}.
\end{align}
If we could show that the $\{\wh{f_{J}} \ast \wh{f_{I(0, \nu^b)}} \ast \wh{f_{I(0,\nu^b)}}\}_{J \in P_{\nu^{2b}}(I)}$ are almost pairwise orthogonal,
then combining this with another application of Plancherel in \eqref{pf3eq1} would finish the proof.

Inspired by that we only looked at the quadratic equation in Section \ref{motivation}, we
only look at the vertical projection of the supports of $\wh{f_J} \ast \wh{f_{I(0, \nu^b)}} \ast \wh{f_{I(0, \nu^b)}}$.
Since $\wh{f_{I(0, \nu^b)}}$ is supported in the strip $\{(\xi, \eta) \in \F^2: |\eta| \leq \nu^{2b}\}$, 
$\wh{f_{I(0, \nu^b)}} \ast \wh{f_{I(0, \nu^b)}}$ is also supported in the strip $\{(\xi, \eta) \in \F^2: |\eta| \leq \nu^{2b}\}$.

Next, $\wh{f_{J}}$ is supported in $\ta_J \subset \tau_J \subset S_{J} := \{(\xi, \eta) \in \F^2: |\eta - s_{J}^2| \leq \nu^{2b}\}$ for any $s_J \in J$.
The ultrametric triangle inequality then shows that
$\wh{f_{J}} \ast \wh{f_{I(0, \nu^b)}} \ast \wh{f_{I(0, \nu^b)}}$ is supported in the strip
$S_{J}$. Now we claim that the $\{S_{J}\}$ are almost pairwise disjoint.
Suppose there was a $J$ and $J'$ such that $S_{J} \cap S_{J'} \neq \emptyset$. Then the ultrametric triangle
inequality gives that $|s_{J'}^{2} - s_{J}^{2}| \leq \nu^{2b}$. But just like as in the argument in Sections \ref{expandeverything}
and \ref{identity}, since $J, J' \subset I$ and $d(I, 0) \geq q\nu$, it follows that $|s_{J} + s_{J'}| \geq q\nu$ and hence
$|s_{J} - s_{J'}| \leq \nu^{2b}(q\nu)^{-1}$ and $d(J, J') \leq \nu^{2b}(q\nu)^{-1}$. Therefore 
\eqref{pf3eq1} is equal to
\begin{align*}
\int_{\F^2}\sum_{\st{J, J' \in P_{\nu^{2b}}(I)\\d(J, J') \leq \nu^{2b}(q\nu)^{-1}}}\wh{f_{J}} \ast \wh{f_{I(0, \nu^b)}} \ast \wh{f_{I(0,\nu^b)}}\ov{\wh{f_{J'}} \ast \wh{f_{I(0, \nu^b)}} \ast \wh{f_{I(0,\nu^b)}}}
\end{align*}
which by the same application of the Cauchy-Schwarz inequality as in \eqref{csarg} gives that this is
$$\lsm \nu^{-O(1)}\int_{\F^2}\sum_{J \in P_{\nu^{2b}}(I)}|\wh{f_{J}} \ast \wh{f_{I(0, \nu^b)}} \ast \wh{f_{I(0,\nu^b)}}|^{2}.$$
Applying Plancherel then finishes the proof.

\subsection{A fourth proof of Lemma \ref{keystep3}}
In this fourth proof, we will rely on that the exponent 2 suggests the use of Plancherel, much like in the third proof. However
to avoid using that 4 is even, we will instead rely on the locally constant property. This proof is essentially the argument in \cite{GLYZK}
in the case of the parabola.

To prove Lemma \ref{keystep3}, we prove \eqref{ks4} instead.
Partition $B(0, \nu^{-2b})$ into translates of $T_{\nu^b}$, that is, vertical $\nu^{-b} \times \nu^{-2b}$ rectangles 
$\{\Box\}$. Such a partition exists by Lemma \ref{tiling} (in fact the proof of Lemma \ref{tiling} gives an explicit construction).
Therefore it suffices to show that for each $\Box$, we have
\begin{align}\label{pf4eq2}
\int_{\Box}|f_{I}|^{2}|f_{I(0, \nu^b)}|^{4} \lsm \nu^{-O(1)} \sum_{J \in P_{\nu^{2b}}(I)}\int_{\Box}|f_{J}|^{2}|f_{I(0, \nu^b)}|^{4}
\end{align}
for all $f$ Fourier supported in $\bigcup_{K \in P_{\delta}}\ta_K$. 
Since we want \eqref{pf4eq2} to be true for all $f$
Fourier supported in $\bigcup_{K \in P_{\delta}}\ta_K$, it suffices to prove
prove \eqref{pf4eq2} when $\Box = T_{\nu^b}$.
That is, we just need to show that
\begin{align}\label{pfeq3}
\int_{T_{\nu^b}}|f_{I}|^{2}|f_{I(0, \nu^b)}|^{4} \lsm \nu^{-O(1)} \sum_{J \in P_{\nu^{2b}}(I)}\int_{T_{\nu^b}}|f_{J}|^{2}|f_{I(0, \nu^b)}|^{4}
\end{align}
for all $f$ Fourier supported in $\bigcup_{K \in P_{\delta}}\ta_K$.
By the locally constant property,
$|f_{I(0, \nu^b)}|$ is constant on any translate of $T_{\nu^{b}}$.
Thus to prove \eqref{pfeq3} it suffices to just show that
\begin{align}\label{pfeq4}
\int_{|x| \leq \nu^{-b}, |y| \leq \nu^{-2b}}|f_{I}(x, y)|^{2}\, dx\, dy \lsm \nu^{-O(1)} \sum_{J \in P_{\nu^{2b}}(I)}\int_{|x| \leq \nu^{-b}, |y| \leq \nu^{-2b}}|f_{J}(x, y)|^{2}\, dx\, dy.
\end{align} 
Since in the efficient congruencing argument, we only looked at the quadratic equation,
this suggests in \eqref{pfeq4} that we freeze the $x$ variable and look only
at \eqref{pfeq4} in the $y$ variable.
Thus we need to show that for every $x_0$ such that $|x_0| \leq \nu^{-b}$, we have
\begin{align}\label{pfeq5}
\int_{|y| \leq \nu^{-2b}}|f_{I}(x_0, y)|^{2}\, dy \leq \sum_{J \in P_{\nu^{2b}}(I)}\int_{|y| \leq \nu^{-2b}}|f_{J}(x_0, y)|^{2}\, dy.
\end{align}
Rewriting the left hand side of \eqref{pfeq5} and applying Plancherel, we obtain
\begin{align}\label{pf4eq6}
\int_{\F}|\sum_{J \in P_{\nu^{2b}}(I)}f_{J}(x_0, y)1_{|y| \leq \nu^{-2b}}|^{2}\, dy = \int_{\F}|\sum_{J \in P_{\nu^{2b}}(I)}\wt{f_{J}}(x_0, \eta) \ast \nu^{-2b}1_{|\eta| \leq \nu^{2b}}|^{2}\, d\eta
\end{align}
where here we use $\wt{f}$ to just denote the Fourier transform in the second variable.
Let $F_{x_0}(\eta) := \int_{\F}\wh{f}(\xi, \eta)\chi(\xi x_0)1_{\ta_{J}}(\xi, \eta)\, d\xi$. Then $f_{J}(x_0, y) = \int_{\F}F_{x_0}(\eta)\chi(\eta y)\, d\eta$
and so $\wt{f_{J}}(x_0, y) = F_{x_0}(y)$ which is supported in the projection 
of $\ta_{J}$ onto the $y$-axis, but in Section \ref{taopf}, we saw that
such a projection is contained in $S_{J}$. 
By the ultametric triangle inequality, the convolution of $\wt{f_{J}}(x_0, \eta)$ 
with $1_{|\eta| \leq \nu^{2b}}$ does not change its support.
Since from Section \ref{taopf}, we saw that $S_{J} \cap S_{J'} = \emptyset$ 
if $d(J, J') > \nu^{2b}(q\nu)^{-1}$, by the same argument as at the end of Section \ref{taopf}, it follows that \eqref{pf4eq6}
is
\begin{align*}
\lsm \nu^{-O(1)}\int_{\F}\sum_{J \in P_{\nu^{2b}}(I)}|\wt{f_{J}}(x_0, \eta) \ast \nu^{-2b}1_{|\eta| \leq \nu^{2b}}|^{2}\, d\eta.
\end{align*}
Undoing Plancherel then gives the right hand side of \eqref{pfeq5} which
then completes the proof.

\subsection{A fifth proof of Lemma \ref{keystep3}}
The previous four proofs all rely on the fact that $\int_{\F^2} f_{J}\ov{f_{J'}}|f_{I(0, \nu^b)}|^{4} = 0$ if $d(J, J') > (q\nu)^{-1}\nu^{2b}$ or some variant.
Our final proof is qualitatively different in that it does not go through this step. The proof of Lemma \ref{keystep3} given here is in the style of Bourgain and Demeter's proof of decoupling for the paraboloid
\cite{BD15, BD17}
specialized to the case of the parabola.
The Bourgain and Demeter argument uses a different bilinearization than in \eqref{mabdef}, but the main ideas are similar in that
this proof makes use of bilinear Kakeya and wavepacket decomposition.
This argument (in the real setting) first appeared in \cite[Section 3]{Li18}.

To prove Lemma \ref{keystep3}, we first reduce to proving the $a = b$ case.
\begin{lemma}\label{keystep5}
If $1 \leq a \leq b$ and $|I| = \nu^a$, then
\begin{align}\label{ks5a}
\int_{\F^2}|f_{I}|^{2}|f_{I(0, \nu^b)}|^{4} \leq \sum_{J \in P_{\nu^b}(I)}\int_{\F^2}|f_{J}|^{2}|f_{I(0, \nu^b)}|^{4}
\end{align}
for all $f$ Fourier supported in $\bigcup_{K \in P_{\delta}}\ta_K$.
\end{lemma}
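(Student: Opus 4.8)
The plan is to prove \eqref{ks5a} as an identity. Write $f_I = \sum_{J \in P_{\nu^b}(I)} f_J$, so that
$$\int_{\F^2} |f_I|^2 |f_{I(0,\nu^b)}|^4 = \sum_{J, J' \in P_{\nu^b}(I)} \int_{\F^2} f_J \ov{f_{J'}} |f_{I(0,\nu^b)}|^4,$$
and show that every off-diagonal term with $J \ne J'$ vanishes, which leaves exactly the right-hand side of \eqref{ks5a}. Throughout I would take $\delta \le \nu^b$ (the only case that is used, and the only one for which the pieces $f_J$, $J \in P_{\nu^b}(I)$, are Fourier supported in $\ta_J$), noting this follows in the application from $\delta \le \nu^{2b}$.

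The first step is to control the Fourier supports of the two factors. Writing $J = I(s_J, \nu^b)$ for some $s_J \in J$, the function $f_J$ is Fourier supported in $\ta_J \subset \tau_J = B((s_J, s_J^2), \nu^b)$, and hence $\ov{f_{J'}}$ is Fourier supported in $-\ta_{J'} \subset B((-s_{J'}, -s_{J'}^2), \nu^b)$. For the low-frequency factor, $f_{I(0,\nu^b)}$ is Fourier supported in $\ta_{I(0,\nu^b)} \subset B(0,\nu^b)$ (using $\nu^{2b} \le \nu^b$); since $|f_{I(0,\nu^b)}|^4 = f_{I(0,\nu^b)}\,f_{I(0,\nu^b)}\,\ov{f_{I(0,\nu^b)}}\,\ov{f_{I(0,\nu^b)}}$, its Fourier transform is a fourfold convolution supported in $B(0,\nu^b) + B(0,\nu^b) + B(0,\nu^b) + B(0,\nu^b) = B(0,\nu^b)$ by iterating Lemma \ref{ballprop}(i).

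Combining these, the Fourier transform of $f_J \ov{f_{J'}} |f_{I(0,\nu^b)}|^4$ is supported in
\[ B((s_J, s_J^2), \nu^b) + B((-s_{J'}, -s_{J'}^2), \nu^b) + B(0, \nu^b) = B((s_J - s_{J'}, s_J^2 - s_{J'}^2), \nu^b) \]
again by Lemma \ref{ballprop}(i). When $J \ne J'$ are distinct intervals of $P_{\nu^b}(I)$, Lemma \ref{lengthpart} gives $|s_J - s_{J'}| \ge d(J,J') \ge q\nu^b > \nu^b$, so $0$ does not lie in this ball; since $\int_{\F^2} g = \wh{g}(0)$ for Schwartz $g$, we get $\int_{\F^2} f_J \ov{f_{J'}} |f_{I(0,\nu^b)}|^4 = 0$. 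The double sum above therefore collapses to its diagonal, proving \eqref{ks5a} (in fact with equality).

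There is no real obstacle here — the argument is a clean orthogonality statement — but the two points to be careful with are the convolution/support bookkeeping and, more importantly, the features special to $\F$ that make the cross terms vanish \emph{exactly} rather than only up to a $\nu^{-O(1)}$ loss as in the other proofs of Lemma \ref{keystep3}: that sums of balls of equal radius do not dilate (Lemma \ref{ballprop}(i)), so all the relevant Fourier supports remain inside radius-$\nu^b$ balls centered at the $(s_J, s_J^2)$, and that two distinct length-$\nu^b$ intervals are separated by \emph{strictly more} than $\nu^b$ (Lemma \ref{lengthpart}), so those balls are genuinely disjoint. Note that, unlike in Lemma \ref{keystep3}, no transversality hypothesis such as $d(I,0) \ge q\nu$ is needed for this step.
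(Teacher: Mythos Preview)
Your proof is correct and actually establishes \eqref{ks5a} as an equality, just as the paper does, but the route is genuinely different. The paper localizes $\F^2$ into balls of radius $\nu^{-b}$, uses the locally constant property (Lemma~\ref{wavepacketdecomposition}) to strip off the weight $|f_{I(0,\nu^b)}|^4$ on each such ball, and then applies Plancherel to the remaining $L^2$ quantity, using that the $f_J 1_{B(0,\nu^{-b})}$ have disjoint Fourier supports. You instead work globally and keep the weight, arguing in the style of the second proof of Lemma~\ref{keystep3} (Section~\ref{identity}) via the identity $\int g = \wh{g}(0)$: the Fourier support of each cross term $f_J\,\ov{f_{J'}}\,|f_{I(0,\nu^b)}|^4$ is contained in the single ball $B((s_J - s_{J'}, s_J^2 - s_{J'}^2), \nu^b)$ thanks to Lemma~\ref{ballprop}(i), and this ball misses the origin when $J \neq J'$ by Lemma~\ref{lengthpart}. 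Your approach is arguably more direct here, since it bypasses both the spatial localization and the appeal to local constancy; the paper's route, on the other hand, foreshadows the locally constant/wavepacket machinery used immediately afterward in the ball-inflation step \eqref{pf5eq2}. Your remark that one should take $\delta \le \nu^b$ so that each $f_J$ is genuinely Fourier supported in $\ta_J$ is well taken and is implicitly needed in the paper's proof as well.
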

\begin{proof}
By partitioning $\F^2$ into squares of side length $\nu^{-b}$ and since
we want \eqref{ks5a} to be true for all $f$ Fourier supported in $\bigcup_{K \in P_{\delta}}\ta_K$, it suffices to show that
\begin{align*}
\int_{B(0, \nu^{-b})}|f_{I}|^{2}|f_{I(0, \nu^b)}|^{4} \leq \sum_{J \in P_{\nu^b}(I)}\int_{B(0, \nu^{-b})}|f_{J}|^{2}|f_{I(0, \nu^b)}|^{4}.
\end{align*}
Since $|f_{I(0, \nu^b)}|$ is constant on $T_{\nu^b}$ and $B(0, \nu^{-b}) \subset T_{\nu^b}$, it suffices to show that
\begin{align*}
\int_{\F^2}|\sum_{J \in P_{\nu^b}(I)}f_{J}1_{B(0, \nu^{-b})}|^{2} \leq \sum_{J \in P_{\nu^b}(I)}\int_{\F^2}|f_{J}1_{B(0, \nu^{-b})}|^{2}.
\end{align*}
But this is just an application of Plancherel since $f_{J}1_{B(0, \nu^{-b})}$
is Fourier supported on $\ta_{J} + B(0, \nu^b)$ and by Lemma \ref{supportsum},
this is equal to $\ta_J$. This completes the proof of Lemma \ref{keystep5}.
\end{proof}

Therefore to prove Lemma \ref{keystep4} (and hence Lemma \ref{keystep3}), it
suffices to assume that $a = b$.
To do this, we first show an intermediate inequality that is an application of 
wavepacket decomposition and bilinear Kakeya (and is also related to the ball inflation step in \cite{BD17}, see \cite[Lemma 3.4]{Li18}):
\begin{align}\label{pf5eq2}
\dashint_{B(0, \nu^{-2b})}&|f_{I}|^{2}|f_{I(0, \nu^b)}|^{4} \lsm \nu^{-O(1)} (\dashint_{B(0, \nu^{-2b})}|f_{I}|^{2})(\dashint_{B(0, \nu^{-2b})}|f_{I(0, \nu^b)}|^{4}).
\end{align}
where $|I| = \nu^b$, $d(I, 0) \geq q\nu$, $f$ is Fourier supported in $\bigcup_{K \in P_{\delta}}\ta_K$ and $\dashint_{B} := \frac{1}{|B|}\int_{B}$.

Partition $B(0, \nu^{-2b})$ by translates of $T_{0, I}$ and call this collection $\mc{T}(I)$.
Similarly, partition $B(0, \nu^{-2b})$ by translates of $T_{0, I(0, \nu^{b})} = T_{\nu^b}$ and call this collection $\mc{T}(I(0, \nu^b))$.
Since $f_{I}$ is Fourier supported in $\ta_I$ and $f_{I(0, \nu^b)}$ is Fourier supported in $\ta_{I(0, \nu^b)}$,
from wavepacket decomposition,  we can write
$|f_{I}|^{2} = \sum_{T \in \T(I)}c_{T}^{2}1_{T}$ and $|f_{I(0, \nu^b)}|^{4} = \sum_{T' \in \mc{T}(I(0, \nu^b))}c_{T'}^{4}1_{T'}$ 
for some nonnegative constants $c_{T}$ and $c_{T'}$.
Thus the left hand side of \eqref{pf5eq2} becomes
\begin{align}\label{pf5eq3}
\sum_{\st{T \in \mc{T}(I)\\T' \in \mc{T}(I(0, \nu^b))}}c_{T}^{2}c_{T'}^{4}\frac{|T \cap T'|}{|B(0, \nu^{-2b})|}.
\end{align}
By Lemma \ref{intersectionlemma},
\begin{align*}
\frac{|T \cap T'|}{|B(0, \nu^{-2b})|}\leq \frac{\nu^{-2b}/d(I, 0)}{\nu^{-4b}} \leq \nu^{2b}(q\nu)^{-1}.
\end{align*}
Therefore \eqref{pf5eq3} is
\begin{align*}
\leq (q\nu)^{-1}\sum_{\st{T \in \mc{T}(I)\\T' \in \mc{T}(I(0, \nu^b))}}c_{T}^{2}c_{T'}^{4}\nu^{2b}= (q\nu)^{-1}(\nu^{b}\sum_{T \in \mc{T}(I)}c_{T}^{2})(\nu^{b}\sum_{T' \in \mc{T}(I')}c_{T'}^{4})
\end{align*}
which is exactly the right hand side of \eqref{pf5eq2}.
Thus we have shown that
\begin{align*}
\int_{B(0, \nu^{-2b})}|f_{I}|^{2}|f_{I(0, \nu^b)}|^{4} \lsm \nu^{-O(1)}(\dashint_{B(0, \nu^{-2b})}|f_{I}|^{2})(\int_{B(0, \nu^{-2b})}|f_{I(0, \nu^b)}|^{4}).
\end{align*}
We next show that
\begin{align}\label{ks10}
\dashint_{B(0, \nu^{-2b})}|f_{I}|^{2} \leq \sum_{J \in P_{\nu^{2b}}(I)}\dashint_{B(0, \nu^{-2b})}|f_{J}|^{2}.
\end{align}
This would finish the proof of \eqref{ks4} since $|J| = \nu^{2b}$ and so $|f_{J}|$ is constant on $B(0, \nu^{-2b})$ by Corollary \ref{constcor} and hence 
$$(\dashint_{B(0, \nu^{-2b})}|f_{J}|^{2})(\int_{B(0, \nu^{-2b})}|f_{I(0, \nu^b)}|^{4}) = \int_{B(0, \nu^{-2b})}|f_{J}|^{2}|f_{I(0, \nu^b)}|^{4}.$$
Thus it remains to show \eqref{ks10}. By Plancherel,
\begin{align*}
\int_{B(0, \nu^{-2b})}|f_{I}|^{2} &= \int_{\F}|\sum_{J \in P_{\nu^{2b}}(I)}f_{J}1_{B(0, \nu^{-2b})}|^{2}\\
& = \int_{\F}|\sum_{J \in P_{\nu^{2b}}(I)}\wh{f_J} \ast \wh{1}_{B(0, \nu^{-2b})}|^{2} = \sum_{J \in P_{\nu^{2b}}(I)}\int_{B(0, \nu^{-2b})}|f_{J}|^{2}.
\end{align*}
where in the last equality we have used that
the support of $\wh{1}_{B(0, \nu^{-2b})}$ is $B(0, \nu^{2b})$ and since $\ta_{J} + B(0, \nu^{2b}) = \ta_{J}$, each term in the $\sum_{J}$ are orthogonal. 

\bibliographystyle{amsplain}
\bibliography{qpdec}

\providecommand{\bysame}{\leavevmode\hbox to3em{\hrulefill}\thinspace}
\providecommand{\MR}{\relax\ifhmode\unskip\space\fi MR }
\providecommand{\MRhref}[2]{%
  \href{http://www.ams.org/mathscinet-getitem?mr=#1}{#2}
}
\providecommand{\href}[2]{#2}
\begin{thebibliography}{10}

\bibitem{Biggs19}
Kirsti~D. Biggs, \emph{Efficient congruencing in ellipsephic sets: the
  quadratic case}, Acta Arith. \textbf{200} (2021), no.~4, 331--348.

\bibitem{BourgainGAFA93}
Jean Bourgain, \emph{Fourier transform restriction phenomena for certain
  lattice subsets and applications to nonlinear evolution equations. {I}.
  {S}chr\"odinger equations}, Geom. Funct. Anal. \textbf{3} (1993), no.~2,
  107--156.

\bibitem{BD15}
Jean Bourgain and Ciprian Demeter, \emph{The proof of the {$l^2$} decoupling
  conjecture}, Ann. of Math. (2) \textbf{182} (2015), no.~1, 351--389.

\bibitem{BD17}
\bysame, \emph{A study guide for the {$l^2$} decoupling theorem}, Chin. Ann.
  Math. Ser. B \textbf{38} (2017), no.~1, 173--200.

\bibitem{BDG}
Jean Bourgain, Ciprian Demeter, and Larry Guth, \emph{Proof of the main
  conjecture in {V}inogradov's mean value theorem for degrees higher than
  three}, Ann. of Math. (2) \textbf{184} (2016), no.~2, 633--682.

\bibitem{BG11}
Jean Bourgain and Larry Guth, \emph{Bounds on oscillatory integral operators
  based on multilinear estimates}, Geom. Funct. Anal. \textbf{21} (2011),
  no.~6, 1239--1295.

\bibitem{CDGJLM}
Alan Chang, Jaume de~Dios~Pont, Rachel Greenfeld, Asgar Jamneshan, Zane~Kun Li,
  and Jos\'{e} Madrid, \emph{Decoupling for fractal subsets of the parabola},
  Math. Z. \textbf{301} (2022), no.~2, 1851--1879.

\bibitem{CHLMRY}
Brian Cook, Kevin Hughes, Zane~Kun Li, Akshat Mudgal, Olivier Robert, and
  Po-Lam Yung, \emph{A decoupling interpretation of an old argument for
  {V}inogradov's {M}ean {V}alue {T}heorem}, arXiv:2207.01097.

\bibitem{Demeter-book}
Ciprian Demeter, \emph{Fourier restriction, decoupling, and applications},
  Cambridge Studies in Advanced Mathematics, vol. 184, Cambridge University
  Press, Cambridge, 2020.

\bibitem{GLY21}
Shaoming Guo, Zane~Kun Li, and Po-Lam Yung, \emph{Improved discrete restriction
  for the parabola}, arXiv:2103.09795, to appear in \emph{Mathematical Research
  Letters}.

\bibitem{GLY19}
\bysame, \emph{A bilinear proof of decoupling for the cubic moment curve},
  Trans. Amer. Math. Soc. \textbf{374} (2021), no.~8, 5405--5432.

\bibitem{GLYZK}
Shaoming Guo, Zane~Kun Li, Po-Lam Yung, and Pavel Zorin-Kranich, \emph{A short
  proof of $\ell^2$ decoupling for the moment curve}, American J. Math.
  \textbf{143} (2021), no.~6, 1983--1998.

\bibitem{GMW}
Larry Guth, Dominique Maldague, and Hong Wang, \emph{{I}mproved decoupling for
  the parabola}, arXiv:2009.07953.

\bibitem{HB-Cubic}
D.~R. Heath-Brown, \emph{{T}he cubic case of {V}inogradov's mean value theorem
  -- a simplified approach to {W}ooley's ``efficient congruencing"},
  arXiv:1512.03272.

\bibitem{quanta}
Kelsey Houston-Edwards, \emph{{A}n {I}nfinite {U}niverse of {N}umber
  {S}ystems},
  \url{https://www.quantamagazine.org/how-the-towering-p-adic-numbers-work-20201019/}.

\bibitem{Li18}
Zane~Kun Li, \emph{An {$l^2$} decoupling interpretation of efficient
  congruencing: the parabola}, Rev. Mat. Iberoam. \textbf{37} (2021), no.~5,
  1761--1802.

\bibitem{Pierce}
Lillian~B. Pierce, \emph{{T}he {V}inogradov mean value theorem [after {W}ooley,
  and {B}ourgain, {D}emeter and {G}uth]}, Ast\'{e}risque Expos\'{e}s Bourbaki
  \textbf{407} (2019), 479--564.

\bibitem{Taibleson}
M.~H. Taibleson, \emph{Fourier analysis on local fields}, Princeton University
  Press, Princeton, N.J.; University of Tokyo Press, Tokyo, 1975.

\bibitem{Tao247B}
Terence Tao, \emph{247{B}, {N}otes 2: {D}ecoupling theory},
  \url{https://terrytao.wordpress.com/2020/04/13/247b-notes-2-decoupling-theory/}.

\bibitem{Tao254A}
\bysame, \emph{254{A}, {N}otes 5: {B}ounding exponential sums and the zeta
  function},
  \url{https://terrytao.wordpress.com/2015/02/07/254a-notes-5-bounding-exponential-sums-and-the-zeta-function/}.

\bibitem{thiele}
Christoph Thiele, \emph{Wave packet analysis}, CBMS Regional Conference Series
  in Mathematics, vol. 105, Published for the Conference Board of the
  Mathematical Sciences, Washington, DC; by the American Mathematical Society,
  Providence, RI, 2006.

\bibitem{V}
R.~C. Vaughan, \emph{The {H}ardy-{L}ittlewood method}, second ed., Cambridge
  Tracts in Mathematics, vol. 125, Cambridge University Press, Cambridge, 1997.

\bibitem{VVZ}
V.~S. Vladimirov, I.~V. Volovich, and E.~I. Zelenov, \emph{{$p$}-adic analysis
  and mathematical physics}, Series on Soviet and East European Mathematics,
  vol.~1, World Scientific Publishing Co., Inc., River Edge, NJ, 1994.

\bibitem{WooleyICM}
Trevor~D. Wooley, \emph{Translation invariance, exponential sums, and
  {W}aring's problem}, Proceedings of the {I}nternational {C}ongress of
  {M}athematicians---{S}eoul 2014. {V}ol. {II}, Kyung Moon Sa, Seoul, 2014,
  pp.~505--529.

\bibitem{W16}
\bysame, \emph{The cubic case of the main conjecture in {V}inogradov's mean
  value theorem}, Adv. Math. \textbf{294} (2016), 532--561.

\bibitem{WooleyNested}
\bysame, \emph{Nested efficient congruencing and relatives of {V}inogradov's
  mean value theorem}, Proceedings of the London Mathematical Society
  \textbf{118} (2019), no.~4, 942--1016.

\end{thebibliography}
\end{document}